\newcommand{\margnote}[1]{
\ifthenelse{\boolean{shownotes}}%
{\marginpar{\raggedright\tiny\texttt{#1}}}%
{}%
}
\newcommand{\hole}[1]{
\ifthenelse{\boolean{shownotes}}%
{\begin{center} \fbox{ \rule {.25cm}{0cm}
\rule[-.1cm]{0cm}{.4cm} \parbox{.85\textwidth}{\begin{center}
\texttt{#1}\end{center}} \rule {.25cm}{0cm}}\end{center}}
{}}
\newtheorem{theorem}{Theorem}[section]
\newtheorem{lemma}[theorem]{Lemma}
\newtheorem{corollary}[theorem]{Corollary}
\newtheorem{definition}[theorem]{Definition}
\theoremstyle{remark}
\newtheorem{remark}[theorem]{Remark}
\newcommand{\nada}[1]{}
\newcommand{\R}{\mathbb{R}}
\newcommand{\N}{\mathbb{N}}
\newcommand{\Z}{\mathbb{Z}}
\newcommand{\T}{\mathbb{T}}
\newcommand{\ub}{u^{\be}}
\newcommand{\vn}{u^{\be,n}}
\newcommand{\wn}{u^{\be_n,n}}
\newcommand{\pn}{p^{\be,n}}
\newcommand{\omegab}{\omega^{\be}}
\newcommand{\pb}{p^{\be}}
\newcommand{\dive}{\mathop{\mathrm {div}}}
\newcommand{\curl}{\mathop{\mathrm {curl}}}
\newcommand{\be}{\alpha}
\newcommand{\fb}{f^{\be}}
\newcommand{\fub}{{\widehat{u}}^{\be}}
\newcommand{\ffb}{{\widehat{f}}^{\be}}
\newcommand{\un}{u^{{\be_n},n}}
\newcommand{\qn}{p^{{\be_n},n}}
\numberwithin{equation}{section}
\begin{document}

\title[Suitable weak solutions obtained by the Voigt
approximation]{Suitable weak solutions to the 3D Navier-Stokes
  equations are constructed with the Voigt Approximation}

\author[Berselli]{Luigi C. Berselli}
\address[L. C. Berselli]{\newline Dipartimento di Matematica
  \\
  Universit\`a di Pisa
  \\
  Via F.~Buonarroti 1/c, I-56127, Pisa, Italy}
\email[]{\href{luigi.carlo.berselli@unipi.it}{luigi.carlo.berselli@unipi.it}}

\author[Spirito]{Stefano Spirito} 

\address[S. Spirito]{\newline GSSI - Gran Sasso Science Institute
  \\
  Viale F.~Crispi 7, I-67100, L'Aquila, Italy}

\email[]{\href{stefano.spirito@gssi.infn.it}{stefano.spirito@gssi.infn.it}}

\subjclass[2010]{Primary: 35Q30, Secondary: 35A35, 76M20.}

\keywords{Navier-Stokes Equations, suitable weak solution, boundary
  value problem, Navier-Stokes-Voigt model}

\begin{abstract}
  In this paper we consider the Navier-Stokes equations supplemented
  with either the Dirichlet or vorticity-based Navier boundary
  conditions. We prove that weak solutions obtained as limits of
  solutions to the Navier-Stokes-Voigt model satisfy the local energy
  inequality. Moreover, in the periodic setting we prove that if the
  parameters are chosen in an appropriate way, then we can construct
  suitable weak solutions trough a Fourier-Galerkin finite-dimensional
  approximation in the space variables.
\end{abstract}

\maketitle

\section{Introduction}
We prove that weak solutions to the 3D Navier-Stokes
Equations~\eqref{eq:nse} (from now on NSE) obtained as limits of
solutions to the Navier-Stokes-Voigt model~\eqref{eq:Voigt} (from now
on NSV) are \textit{suitable weak solutions}.

To set the problem, we recall that the initial boundary value problem
for the incompressible NSE with unit viscosity, zero external force,
in a smooth bounded domain $\Omega\subset\R^3$ is
\begin{align}
  \label{eq:nse}
  \displaystyle{u_t-\Delta u+(u\cdot\nabla)\,u+\nabla
    p}&=0&\quad \text{in }(0,T)\times\Omega,
  \\
  \dive u&=0&\quad \text{in }(0,T)\times\Omega,
  \\
  \label{eq:bc}
  u&=0&\quad\text{on }(0,T)\times\Gamma,   
  \\
  \label{eq:id}
  u(x,0)&=u_0(x)&\qquad\text{in }\Omega,
\end{align}
where $u:\,(0,T)\times\Omega\to \R^{3}$ is the velocity vector field
and $p:\,(0,T)\times\Omega \to\R$ is the scalar pressure, and
$u_0(x)$ is a divergence-free vector initial datum. We are writing
the problem with vanishing Dirichlet boundary conditions, but we will
treat also a Navier-type boundary condition.

It is well-known that important issues as global regularity and
uniqueness of weak solutions for the 3D NSE are still open and very
far to be understood, see Galdi~\cite{Gal2000a} and Constantin and
Foias~\cite{CF1988}. A keystone regularity result for weak solutions
is the partial regularity theorem of Caffarelli, Kohn, and
Nirenberg~\cite{CKN1982} which asserts that the set of interior
(possible) singularities has vanishing one-dimensional parabolic
Hausdorff measure. Concerning partial regularity results up to the
boundary with Dirichlet boundary conditions, see~\cite{LS1999}. In
the case of Navier boundary condition~\eqref{eq:bcn} we could not find
a specific reference, however we suspect and conjecture that similar
results of partial regularity can be obtained also in this case with
minor changes.

In any case, the partial regularity theorem holds for a
particular subclass of Leray weak solutions, called starting
from~\cite{CKN1982} \textit{``suitable weak solutions.''} Beside
technical regularity properties, the most important additional
requirement of suitable weak solutions (see Scheffer~\cite{Sche1977}
and see also Sec.~\ref{sec:Pre} below for precise definitions), is the
following inequality, often called in literature \textit{local or
  generalized energy inequality}:
\begin{equation}
  \label{eq:GEI}
  \begin{aligned}
    \partial_{t}\left(\frac{1}{2}|u|^{2}\right)+\nabla\cdot\left(\left(\frac{1}{2}|u|^{2}
        +p\right)u\right)-\Delta\left(\frac{1}{2}|u|^{2}\right)
    + |\nabla u|^{2}
    \leq 0
  \end{aligned}
\end{equation}
{in }$\mathcal{D}^{'}((0,T)\times\Omega)$.

Since at present results of uniqueness for weak solutions are not
known, we cannot exclude that each method used to construct weak
solutions can produce its own class of solutions and these solutions
could not satisfy the local energy inequality. For this particular
issue see also the recent review in Robinson, Rodrigo, and
Sadowski~\cite{RRS2016}. It is then a relevant question to check
whether weak solutions obtained by different methods are suitable or
not, especially those constructed with methods which are well
established by physical or computational motivations. Together with
the construction given in~\cite{CKN1982} by retarded mollifiers, they
also recall that in~\cite{Sche1977} existence of suitable solution
(even if the name did not exist yet) has been obtained for the Cauchy
problem without external force. The technical improvements to obtain
partial regularity with external forces in the natural $L^2(\Omega)$
space --or even $H^{-1/2}(\Omega)$-- arrived only recently with the
work of Kukavica~\cite{Kuk2008} (in fact in~\cite{CKN1982} the force
needed to be in $L^{\frac{5}{3}+\delta}(\Omega)$). Here we do not
consider external forces and we are only treating the problem of
showing if the solutions constructed by certain approximations
satisfy~\eqref{eq:GEI}.

In the development of the concept of local energy inequality we recall
--in earlier times-- the two companion papers by Beir\~ao da
Veiga~\cite{Bei1985a,Bei1985b} dealing with the hyper-viscosity and a
general approximation theorem. Recent results in an exterior domain
with the Yosida approximation are also those by Farwig, Kozono, and
Sohr~\cite{FKS2005}.  Finally, we mention also that the existence of suitable weak solution has been proved by using some artificial compressibility method, see \cite{BS2016a,DS2011}.

Beside the pioneering work in~\cite{CKN1982}, the interest for the
notion of suitable solutions has been recently renewed in the context
of Large Eddy Simulation (LES) and turbulence models. %
The interest for suitable solutions comes also from the fact that the
local energy inequality seems a natural request (representing a sort
of entropy) for any reasonable approximation of the NSE. It is
especially in the field of turbulent models and LES that Guermond
\textit{et al.}~\cite{Gue2008,GPP2011,GP2005} --making a parallel with
the notion of entropy solutions-- suggested that LES models should
select \textit{``physically relevant''} solutions of the Navier-Stokes
equations, that is those satisfying the local energy inequality.

In the periodic setting it is known that most of the models belonging
to the $\alpha$-family produce suitable weak solutions, in the limit
as $\alpha\to0^+$; recall e.g. the results on Leray-$\alpha$
approximation~\cite{GOP2004}, see also the last
Sec.~\ref{sec:teo2}. The LES models are aimed at producing
approximation of the average velocity and dimensionally $\alpha$ is a
length, connected with the smallest resolved scale. Most of the LES
models are designed for the space-periodic setting, to simulate
homogeneous turbulence, and their introduction is generally considered
\textit{challenging} in presence of boundaries,
see~\cite{BIL2006,CL2014}. Among these methods the NSV (also very
close to the simplified Bardina model) seems one of the most
promising, since it does not require extra boundary conditions.  The
initial-boundary value problem for the NSV system with Dirichlet data
reads as follows: given $\be>0$ solve
\begin{align}
  \label{eq:Voigt}
  \ub_t-\be^2\Delta\ub_t-\Delta
  \ub+(\ub\cdot\nabla)\,\ub+\nabla\pb&=0&\quad\text{in
  }(0,T)\times\Omega,
  \\
  \nabla\cdot\ub&=0&\quad\text{in }(0,T)\times\Omega,
  \\
  \label{eq:bcv}
  \ub&=0&\quad\text{on }(0,T)\times\Gamma,
  \\
  \label{eq:ida}
  \ub(0,x)&=\ub_0(x)&\quad\text{in }\Omega.
\end{align}
The NSV method has been introduced by Oskolkov~\cite{Osk1973,Osk1980}
to model visco-elastic fluids, but the interest in the theory of
turbulence and LES models came with the work of Titi~\textit{et
  al.}~\cite{CLT2006,RT2010} and also with the simplified Bardina
model by Layton and Lewandowski~\cite{LL2006a}. For these reasons here
we consider the NSV model which is one of the few well-posed also in
the case of a domain with boundaries. We observe that the problem of
convergence towards a suitable weak solutions is not hard in the
space-periodic setting, see the last section. On the other hand, for
the boundary value problem the result is --as far as we know--not
solved yet and as the reader will see it requires some technical care
to be tackled. In particular, we will focus on the problems arising
from the presence boundaries and first we consider the Dirichlet case,
which is the most common. Then, we will consider another popular
(especially in turbulence problems~\cite{BIL2006,CL2014}) set of
boundary condition: the slip at the wall (Navier boundary conditions),
which are particularly interesting when studying wall effects for
non-homogeneous turbulence.  These boundary conditions, which
replace~\eqref{eq:bcv}, have been recently studied for various applied
and theoretical reasons
in~\cite{BeiC2009a,Ber2010b,BR2006,BS2012a,XX2007} and they read as
follows
\begin{equation}
  \label{eq:bcn}
  \begin{aligned}
    u\cdot n&=0\qquad\text{on }(0,T)\times\Gamma,
    \\
    \omega\times n&=0\qquad\text{on }(0,T)\times\Gamma,
  \end{aligned}
\end{equation}
where $\omega:=\text{curl}\, u$ and $n$ is the unit outward normal
vector on the boundary $\Gamma$.  The same conditions can be also used
to complement the NSV with the variables $\ub,\,\omega^\be=\curl\ub$,
hence substituting~\eqref{eq:bcv} by
\begin{equation}
  \begin{aligned}
    \label{eq:bcvn}
    \ub\cdot n&=0\qquad\text{on }(0,T)\times\Gamma,
    \\
    \omega^{\be}\times n&=0\qquad\text{on }(0,T)\times\Gamma.
  \end{aligned}
\end{equation}
We will recall the notion of solution for both problems and we will
show the differences to handle the two set of boundary conditions.

Then, we prove the main results of the paper which are the following
two theorems.
\begin{theorem}
  \label{thm:main}
  Let $\{\ub_0\}_{\be}\subset H^{1}_{0,\sigma}(\Omega)\cap
  H^{2}(\Omega)$ be a sequence of initial data converging strongly in
  $H^1_{0,\sigma}(\Omega)$ to $u_0$. Let $(\ub,\pb)$ be the
  corresponding unique weak solution of the NSV
  system~\eqref{eq:Voigt}-\eqref{eq:ida} with Dirichlet boundary
  conditions~\eqref{eq:bcv}. Then, there exists $(u,p)$ such that --up
  to a sub-sequence still labeled as $\{(\ub,\pb)\}_{\be}$-- it holds
  as $\be\to0^+$
\begin{enumerate}
\item[1)] $\ub\rightarrow u\textrm{ strongly in
  }L^{2}((0,T)\times\Omega)$,
\item[2)] $\nabla\ub\rightharpoonup \nabla u\textrm{ weakly in
  }L^{2}((0,T)\times\Omega)$,
\item[3)] $ \pb\rightharpoonup p\textrm{ weakly in
  }H^{-r}(0,T;H^{\frac{3}{10}}(\Omega))\textrm{ for all
  }r>\frac{2}{5}$,
\item[4)] The couple $(u,p)$ is a weak solution of the
  NSE~\eqref{eq:nse} and it satisfies the local energy
  inequality~\eqref{eq:GEI}.
\end{enumerate}
\end{theorem}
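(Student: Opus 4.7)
The plan is to exploit the fact that for every $\be>0$ the Voigt system admits a unique strong solution, derive estimates on $(\ub,\pb)$ that are uniform in $\be$, extract compactness, and then pass to the limit both in the weak formulation of~\eqref{eq:nse} and in a local energy identity satisfied pointwise by each $\ub$. Testing~\eqref{eq:Voigt} with $\ub$ gives the global Voigt energy balance
\begin{equation*}
\frac{1}{2}\frac{d}{dt}\bigl(\|\ub\|_2^2+\be^2\|\nabla\ub\|_2^2\bigr)+\|\nabla\ub\|_2^2=0,
\end{equation*}
so that, since $\{\ub_0\}$ is bounded in $H^1_{0,\sigma}(\Omega)$, the family $\ub$ is uniformly bounded in $L^\infty(0,T;L^2)\cap L^2(0,T;H^1)$, $\be\nabla\ub$ is uniformly bounded in $L^\infty(0,T;L^2)$, and crucially $\be^2\int_0^T\|\nabla\ub\|_2^2\,dt=o(1)$ as $\be\to0^+$.

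The pressure is reconstructed by taking the divergence of~\eqref{eq:Voigt}: since $\dive\ub=0$ the Voigt contribution $\be^2\Delta\ub_t$ drops out and one is reduced to $-\Delta\pb=\partial_i\partial_j(\ub_i\ub_j)$, complemented in the Dirichlet case by a careful treatment of the boundary through a decomposition of $\pb$ into a Poisson-Dirichlet part and a harmonic remainder, the latter being recovered from the weak equation tested against gradients of harmonic functions. Interpolating the uniform bounds on $\ub$ gives $\ub\otimes\ub\in L^{5/3}((0,T)\times\Omega)$ and explains the spatial exponent $H^{3/10}$; the negative-order time regularity $H^{-r}$ with $r>2/5$ reflects that time regularity of $\ub_t$ is only accessible through the non-local operator $(I-\be^2\Delta)^{-1}$. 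Writing the equation as $(I-\be^2\Delta)\ub_t=\Delta\ub-(\ub\cdot\nabla)\ub-\nabla\pb$ yields a uniform bound on $\ub_t$ in a suitable negative Sobolev space, and the Aubin--Lions--Simon lemma upgrades the weak $L^2(H^1)$ convergence to the strong $L^2((0,T)\times\Omega)$ convergence of $\ub\to u$. Combining the strong and weak convergences together with the pressure estimate one passes to the limit in the weak form of~\eqref{eq:Voigt}, which yields assertions (1)-(3) and identifies $(u,p)$ as a Leray-Hopf weak solution of the NSE.

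For item (4), since every $\ub$ is smooth enough we test~\eqref{eq:Voigt} against $\phi\ub$ with $\phi\in C^\infty_c((0,T)\times\Omega)$, $\phi\geq0$, and integrate by parts to obtain a local energy identity consisting of the standard NSE-type expression plus two Voigt remainders, morally $\be^2\iint\phi_t|\nabla\ub|^2$ and $\be^2\iint\nabla\phi\otimes\ub:\nabla\ub_t$. The NSE-type terms pass to the limit using the strong $L^2$ convergence of $\ub$, the pressure convergence of (3), and the weak lower semicontinuity of $\iint\phi|\nabla\ub|^2$, the last producing the $\leq$ sign of~\eqref{eq:GEI}. The first Voigt remainder vanishes immediately because $\be^2\int_0^T\|\nabla\ub\|_2^2\,dt\to 0$. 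The main obstacle is the second remainder: no uniform bound on $\nabla\ub_t$ is directly available from the basic energy, and the Voigt regularization $\Delta\ub_t$ entangles the time derivative of $\ub$ with its spatial derivatives. The remedy is to integrate by parts in $t$ and then use the equation itself to trade the resulting $\ub_t$ against quantities already controlled in the first paragraph, so that the factor $\be^2$ is paid against uniformly bounded terms; once this is done, the Voigt remainders disappear in the limit and~\eqref{eq:GEI} follows.
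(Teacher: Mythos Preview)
Your outline has the right architecture, but two load-bearing steps are either missing or do not work as written.

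\textbf{The pressure.} In a bounded domain with Dirichlet data the interior equation $-\Delta\pb=\partial_i\partial_j(\ub_i\ub_j)$ comes with no usable boundary condition, so the harmonic remainder in your decomposition has to be recovered from the full momentum equation~\eqref{eq:Voigt}. The Voigt term does \emph{not} drop out of that step: testing against gradients of harmonic functions leaves contributions of the type $\be^2\int\nabla\ub_t:\nabla^2 h$ (or boundary traces of $\nabla\ub_t$), and nothing in your first paragraph controls these uniformly in~$\be$. The paper therefore abandons the Poisson route entirely: it projects onto divergence-free fields, takes the Fourier transform in time, and tests the resulting equation by fractional Stokes powers $A^{-\chi}\widehat{\ub}$; taking the imaginary part kills the Voigt contribution and yields $\ub\in H^{\tau}(0,T;H^{-\chi})$ uniformly for $\tau<\tfrac{2}{5}(1+\chi)$, from which the $H^{-r}(0,T;H^{3/10})$ pressure bound follows by comparison. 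This fractional time regularity is also what makes the limit in $\int\pb\ub\cdot\nabla\phi$ work: one needs $\ub\to u$ \emph{strongly} in the dual space $H^{r}(0,T;H^{-3/10})$, which is not delivered by the ordinary Aubin--Lions argument you invoke.

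\textbf{The Voigt remainder in the local energy identity.} After integrating by parts in $t$ and $x$, the dangerous surviving term is $\be^2\int_0^T\!\int_\Omega\nabla\ub\,\nabla\phi\cdot\ub_t$. Your plan to ``use the equation itself'' is circular: substituting $\be^2\Delta\ub_t$ from~\eqref{eq:Voigt} simply reproduces the other terms of the identity, while inverting $(I-\be^2\Delta)$ gives $\ub_t$ only in $L^{4/3}(0,T;H^{-1})$, which cannot be paired against $\nabla\ub\,\nabla\phi\in L^2(0,T;L^2)$. The paper supplies the missing ingredient as a separate weighted estimate: multiplying~\eqref{eq:Voigt} by $\be^3\ub_t$ and using Gagliardo--Nirenberg yields $\be^{3}\int_0^T\|\ub_t\|^2\,dt\leq c$ uniformly, after which
\[
\be^2\Big|\int_0^T\!\!\int_\Omega\nabla\ub\,\nabla\phi\cdot\ub_t\,dx\,dt\Big|
\leq C\,\be^{1/2}\Big(\be^{3}\!\int_0^T\!\|\ub_t\|^2\Big)^{1/2}\Big(\int_0^T\!\|\nabla\ub\|^2\Big)^{1/2}\leq C\,\be^{1/2}\to 0.
\]
This bound does not follow from the basic energy balance and is the step your proposal is missing.
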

\begin{remark}
  It is also possible to show that the solution can be slightly
  changed to be suitable in the usual sense, see
  Corollary~\ref{cor:corollary}.
\end{remark}
\begin{theorem}
  \label{thm:main2}
  Let $\{\ub_0\}_{\be}\subset L^{2}_{\sigma}(\Omega)\cap
  H^{2}(\Omega)$ be a sequence of initial data
  satisfying~\eqref{eq:bcvn} and converging strongly in
  $L^2_\sigma(\Omega)\cap H^1(\Omega)$ to $u_0$. Let $(\ub,\pb)$ be
  the corresponding unique weak solution of the NSV
  system~\eqref{eq:Voigt}-\eqref{eq:ida} with Navier-type boundary
  conditions~\eqref{eq:bcv}. Then, there exists $(u,p)$ such that --up
  to a sub-sequence still labeled as $\{(\ub,\pb)\}_{\be}$-- it holds
  as $\be\to0^+$
  \begin{enumerate}
  \item[1)] $\ub\rightarrow u\textrm{ strongly in
    }L^{2}((0,T)\times\Omega)$,
  \item[2)] $\nabla\ub\rightharpoonup \nabla u\textrm{ weakly in
    }L^{2}((0,T)\times\Omega)$,
  \item[3)] $ \pb\rightharpoonup p\textrm{ weakly in
    }L^{\frac{5}{3}}((0,T)\times\Omega)$,
  \item[4)] The couple $(u,p)$ is a suitable weak solution of the NSE~\eqref{eq:nse}.
  \end{enumerate}
\end{theorem}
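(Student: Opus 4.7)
The plan follows the same scheme as for Theorem~\ref{thm:main}, but exploits the more favorable functional setting provided by the Navier-type boundary conditions~\eqref{eq:bcvn}. These conditions allow for the standard $L^{5/3}$ pressure estimate to go through, which is precisely the extra integrability needed to conclude that $(u,p)$ is suitable in the classical sense --- in contrast to Theorem~\ref{thm:main}, where the pressure is controlled only in a negative Sobolev space in time.

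I would first multiply~\eqref{eq:Voigt} by $\ub$. Under~\eqref{eq:bcvn}, the boundary contributions vanish thanks to identities of the type $-\int_\Omega \Delta \ub \cdot \ub = \int_\Omega |\curl \ub|^{2}$ (up to lower-order curvature terms controlled by the $L^2$ norm), and I obtain the global energy equality
$$\tfrac{1}{2}\tfrac{d}{dt}\bigl(\|\ub\|_{L^2}^{2}+\be^{2}\|\nabla\ub\|_{L^2}^{2}\bigr)+\|\nabla\ub\|_{L^2}^{2}=0.$$
Since $\ub_{0}$ converges in $H^{1}$ and $\be\to0^{+}$, this yields uniform bounds on $\ub$ in $L^{\infty}(L^{2})\cap L^{2}(H^{1})$ and in addition $\be^{2}\|\nabla\ub\|_{L^{\infty}(L^{2})}^{2}\to0$; interpolation also gives $\ub$ bounded in $L^{10/3}((0,T)\times\Omega)$. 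Taking the divergence of~\eqref{eq:Voigt} yields exactly the NSE pressure equation
$$-\Delta\pb=\dive\dive(\ub\otimes\ub),$$
and the Navier conditions supply a natural Neumann-type boundary condition for $\pb$; standard elliptic theory then gives $\pb$ uniformly bounded in $L^{5/3}((0,T)\times\Omega)$.

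Next, from~\eqref{eq:Voigt} the quantity $(I-\be^{2}\Delta)\ub_t$ is bounded in $L^{5/3}(W^{-1,5/3})$, and an Aubin--Lions argument combined with $\ub\in L^{2}(H^{1})$ produces, along a subsequence, $\ub\to u$ strongly in $L^{2}((0,T)\times\Omega)$, $\nabla\ub\rightharpoonup\nabla u$ weakly in $L^{2}$, and $\pb\rightharpoonup p$ weakly in $L^{5/3}$. Passing to the limit in the weak form of~\eqref{eq:Voigt}, the Voigt term $-\be^{2}\Delta\ub_t$ disappears as an $O(\be)$ distribution, identifying $(u,p)$ as a weak solution of~\eqref{eq:nse} with~\eqref{eq:bcn} and establishing items~1)--3).

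The main technical obstacle is the passage to the limit in the local energy identity. Testing~\eqref{eq:Voigt} against $\ub\phi$ with $\phi\in C_c^{\infty}((0,T)\times\Omega)$, $\phi\geq0$, yields the classical local energy terms plus two Voigt contributions,
$$-\tfrac{\be^{2}}{2}\int_0^T\!\!\int_\Omega|\nabla\ub|^{2}\partial_t\phi\,dx\,dt \quad\text{and}\quad \be^{2}\int_0^T\!\!\int_\Omega\nabla\ub_t:(\ub\otimes\nabla\phi)\,dx\,dt.$$
The first is $O(\be^{2})$ by the $L^{2}_{t,x}$ bound on $\nabla\ub$. For the second, integrating by parts in time (legitimate since $\phi$ has compact support in time) and substituting $\ub_t$ via~\eqref{eq:Voigt} decomposes it into a sum of integrals each carrying at least one factor of $\be$, which vanish in the limit by the a priori bounds. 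The remaining terms converge as in the classical construction: strong $L^{2}$ convergence of $\ub$ handles the kinetic and diffusion terms, strong $L^{3}$ convergence (obtained by interpolating strong $L^{2}$ with the $L^{10/3}$ bound) handles the cubic convective term, and weak $L^{5/3}$ convergence of $\pb$ paired with strong $L^{5/2}$ convergence of $\ub$ handles the pressure term. Lower semi-continuity applied to $\int|\nabla\ub|^{2}\phi$ then turns the NSV identity into the inequality~\eqref{eq:GEI} for $(u,p)$. Together with the $L^{5/3}$ integrability of $p$ from the first step, this shows that $(u,p)$ is a suitable weak solution in the classical sense.
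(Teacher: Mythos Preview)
Your overall strategy coincides with the paper's: energy estimate, $L^{5/3}$ pressure bound via the Neumann problem (this is the paper's Lemma~\ref{lem:pre}, which relies on Lemma~\ref{lem:zeta} to identify the boundary condition $\partial_n\pb=\ub_i\ub_j\partial_j n_i$), Aubin--Lions compactness, and term-by-term passage to the limit in the local energy identity. The treatment of all the ``classical'' terms and of the pressure term (weak $L^{5/3}$ convergence of $\pb$ against strong $L^{5/2}$ convergence of $\ub$) is exactly as in the paper.

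The gap is in the Voigt term. After your integration by parts one is left (up to harmless $O(\alpha^{2})$ pieces) with
\[
\alpha^{2}\int_{0}^{T}\!\!\int_{\Omega}\nabla\ub:(\ub_{t}\otimes\nabla\phi)\,dx\,dt,
\]
and you propose to substitute $\ub_{t}=\alpha^{2}\Delta\ub_{t}+\Delta\ub-(\ub\!\cdot\!\nabla)\ub-\nabla\pb$ from~\eqref{eq:Voigt}, claiming every resulting piece ``carries at least one factor of~$\alpha$''. This is not true: the substitution produces, for instance, $\alpha^{2}\!\int\nabla\ub:((\ub\!\cdot\!\nabla)\ub\otimes\nabla\phi)$, which contains $|\nabla\ub|^{2}|\ub|$ and is \emph{not} in $L^{1}((0,T)\times\Omega)$ uniformly in~$\alpha$ from the energy bound alone; and $\alpha^{4}\!\int\nabla\ub:(\Delta\ub_{t}\otimes\nabla\phi)$, which after a spatial integration by parts requires control of $D^{2}\ub$ or $\nabla\ub_{t}$ that you have not established. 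The bound $(I-\alpha^{2}\Delta)\ub_{t}\in L^{5/3}(W^{-1,5/3})$ you quote is too weak for this pairing.

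The paper handles this term by a separate weighted estimate (Lemma~\ref{lem:ut-bis}, the Navier analogue of Lemma~\ref{lem:ut}): testing~\eqref{eq:Voigt} against $\alpha^{3}\ub_{t}$ and using Gagliardo--Nirenberg in $L^{4}$ together with a three-exponent Young inequality yields
\[
\alpha^{3}\int_{0}^{T}\|\ub_{t}(s)\|^{2}\,ds\leq c
\]
uniformly in~$\alpha$. Then
\[
\alpha^{2}\Bigl|\int_{0}^{T}\!\!\int_{\Omega}\ub_{t}\,\nabla\ub\,\nabla\phi\,dx\,dt\Bigr|
\leq C\,\alpha^{1/2}\Bigl(\alpha^{3}\!\int_{0}^{T}\|\ub_{t}\|^{2}\Bigr)^{1/2}
\Bigl(\int_{0}^{T}\|\nabla\ub\|^{2}\Bigr)^{1/2}\leq C\,\alpha^{1/2}\to 0.
\]
You should replace the substitution argument by this weighted time-derivative estimate; it is short but not automatic, and is the one genuinely new ingredient beyond the standard suitable-solution construction.
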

Another important open problem is whether Faedo-Galerkin approximation
methods produce solutions which are suitable or not. As far as we
know, there are only partial results of
Guermond~\cite{Gue2006,Gue2007}, concerning approximations made by a
special (but very large) class of Finite-Element spaces for the
velocity and pressure.  Anyway, the Fourier-Galerkin method (obtained
by Fourier series expansion in the space-periodic case) is a case
still not covered by the theory.  In the last section we will consider
the space-periodic case, that is $\Omega=\mathbb{T}:=(\R/2\pi\Z)^{3}$
(the three-dimensional torus), and we require that all variables have
vanishing mean value on $\Omega$. We will recall some results making
connections with the Fourier-Galerkin methods and in the final section
we also give the following result, inspired by Biryuk, Craig, and
Ibrahim~\cite{BCI2007}.
\begin{theorem}
  \label{thm:main3}
  Let  in the space-periodic setting $\{(\un,\qn)\}_{n\in\N}$ be the Fourier-Galerkin
  approximation for the system NSV~\eqref{eq:Voigt} up to the wave-number
  $|k|=n$, and corresponding to the regularization parameter
  $\be=\be_n$. If $\frac{1}{\be_n}=o\left({n^6}\right)$ then there
  exists $(u,p)$ such that, as $n\to+\infty$
\begin{enumerate}
\item[1)] $\un\rightarrow u\textrm{ strongly in }L^{2}((0,T)\times\Omega)$,
\item[2)] $\nabla \un\rightharpoonup \nabla u\textrm{ weakly in
  }L^{2}((0,T)\times\Omega)$, 
\item [3)] $\qn\rightharpoonup p\textrm{ weakly in
  }L^{\frac{5}{3}}((0,T)\times\Omega)$,
\item[4)] $(u,p)$ is a suitable weak solution of the space-periodic NSE.
\end{enumerate}
\end{theorem}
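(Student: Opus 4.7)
The plan is to follow the same three-part scheme used in the proof of Theorem~\ref{thm:main2} (uniform a priori estimates, compactness, local energy inequality), specialised to the simpler space-periodic setting, with the extra complication coming from the Fourier-Galerkin truncation. Since $\un$ is a trigonometric polynomial in $x$, the NSV energy estimates tested against $\un$ and $\partial_{t}\un$ yield the uniform bounds $\un\in L^{\infty}(0,T;L^{2})\cap L^{2}(0,T;H^{1})$ with $\be_n\|\nabla\un\|_{L^{\infty}(L^{2})}\leq C$, and the pressure $\qn$, defined via the Poisson equation $\Delta\qn=-P_n\dive[(\un\cdot\nabla)\un]$, is uniformly bounded in $L^{5/3}((0,T)\times\mathbb{T})$ through Calder\'on--Zygmund estimates combined with the standard $L^{10/3}_{t,x}$-interpolation for $\un$. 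A compactness argument of Aubin--Lions type then provides strong $L^{2}$-convergence of $\un$, and passing to the limit in the Galerkin--Voigt equation identifies $(u,p)$ as a Leray--Hopf weak solution of NSE, which already covers items $1)$--$3)$.

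The real core is item $4)$. Since $\un$ is smooth in $x$, multiplying the Galerkin--Voigt equation by $\un\phi$ for $\phi\in\mathcal{D}((0,T)\times\mathbb{T})$, $\phi\geq 0$, is pointwise legitimate and produces, after the standard integrations by parts, the identity
\begin{equation*}
-\tfrac{1}{2}\iint|\un|^{2}\partial_{t}\phi
+\iint|\nabla\un|^{2}\phi
-\tfrac{1}{2}\iint|\un|^{2}\Delta\phi
-\tfrac{1}{2}\iint|\un|^{2}\un\cdot\nabla\phi
-\iint\qn\un\cdot\nabla\phi+V_n=R_n,
\end{equation*}
where $V_n$ collects the Voigt corrections (all proportional to $\be_n^{2}$ times derivatives of $\un$), and, after one integration by parts using $\dive\un=0$, the Galerkin commutator reads $R_n=-\iint(\un\otimes\un):\nabla\bigl[(I-P_n)(\un\phi)\bigr]$. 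Passing to the limit on the left-hand side is standard once $V_n,R_n\to 0$: strong $L^{2}$-convergence of $\un$ handles $|\un|^{2}$ and the cubic term, weak $L^{5/3}$-convergence of $\qn$ coupled to strong convergence of $\un$ handles the pressure-velocity term, and weak lower semicontinuity in $L^{2}$ gives the right direction of the inequality for the dissipation $\iint|\nabla\un|^{2}\phi$.

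The main obstacle is precisely to show $V_n,R_n\to 0$, and this is where the scaling $1/\be_n=o(n^{6})$ enters. For $R_n$ one combines H\"older with exponents $5/3$ and $5/2$, the uniform bound $\|\un\otimes\un\|_{L^{5/3}}\leq C$, and a Bernstein--Sobolev commutator estimate on $\nabla(I-P_n)(\un\phi)$ in $L^{5/2}$ which exploits the Fourier support of $\un$ in $\{|k|\leq n\}$, the smoothness of $\phi$, and the Voigt control $\|\un\|_{L^{\infty}(H^{1})}\leq C/\be_n$. The Voigt remainder $V_n$ contains the harmless piece $\be_n^{2}\iint|\nabla\un|^{2}\partial_{t}\phi$, which vanishes from $\be_n^{2}\|\nabla\un\|_{L^{\infty}(L^{2})}^{2}\leq C$ and $\be_n\to 0$, and a more delicate mixed piece proportional to $\be_n^{2}\iint\partial_{t}\un\cdot(\nabla\un\cdot\nabla\phi)$; a bound on $\|\partial_{t}\un\|_{L^{2}(L^{2})}$ obtained by testing the Galerkin--Voigt equation with $\partial_{t}\un$ and using the Bernstein inequality $\|\un\|_{L^{\infty}}\leq Cn^{3/2}\|\un\|_{L^{2}}$ produces a polynomial factor in $n$ against a negative power of $\be_n$. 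The condition $1/\be_n=o(n^{6})$ is exactly the sharp balance of these estimates that forces every error term in $R_n$ and $V_n$ to vanish; once this is done, the local energy inequality passes to $(u,p)$, which together with items $1)$--$3)$ is a suitable weak solution in the space-periodic setting. As in every Galerkin-based construction of suitable solutions, this commutator analysis is the hard step of the argument, and the role of the Voigt regularisation is precisely to dominate the Fourier truncation error under the hypothesis $1/\be_n=o(n^{6})$.
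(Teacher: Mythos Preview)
Your overall architecture is right and matches the paper: uniform energy estimates give 1)--3), and the heart of 4) is the approximate local energy identity with the Voigt remainder $V_n$ and the Galerkin commutator $R_n$, both of which must be shown to vanish. However, your treatment of the two error terms inverts the difficulty and, for $R_n$, misses the key estimate.

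For the Voigt remainder $V_n$ you bring in the Bernstein inequality $\|\un\|_{L^\infty}\leq Cn^{3/2}\|\un\|_{L^2}$ to bound $\|\partial_t\un\|_{L^2(L^2)}$, introducing an $n$-dependence that then has to be balanced against $\alpha_n$. This is unnecessary: the same computation as in Lemma~\ref{lem:ut} (test with $\alpha_n^{3}\partial_t\un$, use Gagliardo--Nirenberg and the basic energy equality) gives $\alpha_n^{3}\int_0^T\|\partial_t\un\|^2\,dt\leq c$ \emph{uniformly in $n$}. Hence $V_n\to 0$ follows from $\alpha_n\to 0$ alone, exactly as in the proofs of Theorems~\ref{thm:main}--\ref{thm:main2}; the link between $\alpha_n$ and $n$ plays no role here.

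The genuine gap is in your treatment of $R_n$. You propose to control $\nabla(I-P_n)(\un\phi)$ in $L^{5/2}$ using only the Voigt bound $\|\un\|_{L^\infty(H^1)}\leq C/\alpha_n$. But $H^1$ control is not enough to make the high-frequency tail small: since $Q_n$ projects onto $|k|>n$, one has $n^{2}\sum_{|k|\geq n/2}|d_k^{n}|^{2}\leq 4\|\nabla\un\|^{2}$, which is merely $O(1)$ in $L^1_t$ and yields no decay. The paper instead establishes a \emph{second-order} weighted estimate by testing the Galerkin--NSV system with $-\Delta\un$: this gives $\alpha_n^{6}\int_0^T\|\Delta\un\|^{2}\,dt\leq c$ uniformly in $n$ (Lemma~\ref{lem:2.1}). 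Combined with the explicit Fourier-side commutator bound of Biryuk--Craig--Ibrahim (Lemma~\ref{lem:2.4}), namely
\[
\|Q_n(\phi\,\un)\|_{L^\infty(\mathbb{T})}^{2}\leq c\Big(n^{2}\!\!\sum_{|k|\geq n/2}|d_k^{n}|^{2}+\frac{1}{n}\sum_{k}|d_k^{n}|^{2}\Big),
\]
and the trivial inequality $n^{2}\sum_{|k|\geq n/2}|d_k^{n}|^{2}\leq 16\,n^{-2}\|\Delta\un\|^{2}$, one obtains $\int_0^T\|Q_n(\phi\,\un)\|_{L^\infty}^{2}\,dt\leq c\,(n^{-2}\alpha_n^{-6}+n^{-1})$. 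This is precisely where the hypothesis linking $\alpha_n$ and $n$ enters. Your sketch omits both the $H^2$ a priori estimate and the specific commutator lemma; without them the claimed ``sharp balance'' cannot be derived.
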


\textbf{Plan of the paper.}  In Section~\ref{sec:Pre} we are going to
fix the notation that we use in the paper and we recall the main
definition and tools used. In Section~\ref{sec:Prio} there are the a
priori estimates we will use in Section~\ref{sec:teo1} to prove
Theorem~\ref{thm:main}-\ref{thm:main2}. Finally, in
Section~\ref{sec:teo2} we prove Theorem~\ref{thm:main3}.
\section{Notation and preliminaries}
\label{sec:Pre}
We start by recalling the functional spaces we will use. Let
$\Omega\subset\R^3$ be a smooth and bounded open set. The space of
compactly supported smooth functions on $\Omega$ will be denoted by
$\mathcal{D}(\Omega)$.  We will denote with
$(L^{p}(\Omega),\|\cdot\|_p)$ the standard Lebesgue space and to
simplify the notation we will denote by $\|\cdot\|$ the
$L^2(\Omega)$-norm and the corresponding scalar product by
$(\cdot,\cdot)$.  The Sobolev space of functions with
$k$-distributional derivatives in $L^{p}(\Omega)$ is denoted by
$W^{k,p}(\Omega)$ and their norm with $\|\cdot\|_{W^{k,p}}$. As
customary we define $H^k(\Omega):=W^{k,2}(\Omega)$.  We will also use
the Bochner spaces $L^{p}(0,T;W^{k,q}(\Omega))$,
$L^{p}(0,T;H^{s}(\Omega))$ and $H^{r}(0,T;H^{s}(\Omega))$, for the
precise definitions see for instance~\cite{CF1988,Tem2001}. We use the
subscript $_{"\sigma"}$ to denote the subspace of solenoidal vector
fields, obtained by using the Leray projection operator $P$ over
tangential and divergence-free vector fields,
see~\cite{Gal2000a,Soh2001}.  In particular, it is standard to
introduce the following spaces
\begin{align*}
  &L^{2}_{\sigma}(\Omega)=\{u\in L^{2}(\Omega): \nabla\cdot
  u=0,\,\,u\cdot n=0\textrm{ on }\partial\Omega\},
  \\
  &H^{1}_{0,\sigma}(\Omega)=\{u\in H^{1}_{0}(\Omega): \nabla\cdot
  u=0,\,\,u=0\textrm{ on }\partial\Omega\}.
\end{align*}
Then, the Stokes operator $(A,D(A))$ is defined as follows:
\begin{equation}
  \label{eq:Stokes}
  \begin{aligned}
    &A:D(A)\rightarrow L^{2}_{\sigma}(\Omega),
    \\
    &D(A)=H^{1}_{0,\sigma}(\Omega)\cap H^{2}(\Omega),
    \\
    &Au=-P\Delta u,
  \end{aligned}
\end{equation}
where $-\Delta: H^{2}(\Omega)\cap H^{1}_{0}(\Omega)\rightarrow
L^{2}(\Omega)$ denotes the Laplace operator with homogeneous Dirichlet
boundary condition and $P$ denotes the projection in $L^{2}(\Omega)$
onto $L^{2}_{\sigma}(\Omega)$.  We choose the open set
$\Omega\subset\R^3$ smooth enough such that the following inequality
holds true
\begin{equation*}
  \|u\|_{H^{2}}\leq C\|Au\|\quad\textrm{ for some }C\geq 0.
\end{equation*}

We recall some technical results on the Navier-Stokes equations we
will need during the proof of the main theorems.
\subsection{Initial Value boundary problem for the NSE }
In this subsection we define the suitable weak solutions of NSE with
boundary condition~\eqref{eq:bc} or~\eqref{eq:bcn} and initial datum
$u_0\in L^2_\sigma(\Omega)$.  With Dirichlet conditions the notion of
Leray-Hopf weak solution for the NSE is well-known, see for
instance~\cite{Gal2000a}. The notion of weak solution to the NSV can
be found in~\cite{CLT2006,Lar2011}.

We recall now the definition of suitable weak solution. 
\begin{definition}
  \label{def:sws}
  Let $(u,p)$ be such that $u\in
  L^{2}(0,T;H^{1}_{0,\sigma}(\Omega))\cap
  L^{\infty}(0,T;L^{2}(\Omega))$, and $p\in
  L^{\frac{5}{3}}((0,T)\times\Omega)$, and $u$ is a Leray-Hopf weak
  solution to the Navier-Stokes equation~\eqref{eq:nse}. The pair
  $(u,p)$ is said a suitable weak solutions if the local energy
  balance~\eqref{eq:GEI} holds in the distributional sense.
\end{definition}
As we said in the introduction one of the main differences between
Leray-Hopf weak solutions and suitable weak solutions is the
inequality~\eqref{eq:GEI}. Indeed, by using the results
of~\cite{SvW1986} in the case of boundary conditions~\eqref{eq:bc} and
the Poisson equation associated to the pressure in the case of
boundary conditions~\eqref{eq:bcn}, see also Lemma~\ref{lem:pre}, it
is always possible to associate to $u$ a scalar pressure $p$ with the
regularity stated in Definition~\ref{def:sws}. However, it not know
how to prove inequality~\eqref{eq:GEI} for general Leray-Hopf weak
solutions. Indeed~\eqref{eq:GEI} is an entropy inequality and formally
is derived by multiplying the first equation of~\eqref{eq:nse} by
$u\,\phi$, with $\phi$ a positive, compactly supported test function,
and by integrating by parts in space and time. The regularity of a
weak solution is not enough to directly justify all the calculation
leading to~\eqref{eq:GEI}.

The technicalities often rely in showing certain regularity of the
pressure field. This can be done rather easily for the problem without
boundaries, since the pressure satisfies the Poisson equation
\begin{equation}
  \label{eq:ep}
  -\Delta \pb=\partial_i\partial_j(\ub_i\ub_j).
\end{equation}
In the case of periodic boundary condition or in the whole space,
from~\eqref{eq:ep} it is easy to get the necessary regularity to treat
the pressure term in~\eqref{eq:GEI}, once good estimates are known on
the velocity.

In a bounded domain with Dirichlet boundary conditions the situation
becomes more subtle. One possible approach would be that of using the
semigroup theory, in the spirit of the estimates by Sohr and von
Wahl~\cite{SvW1986} and Solonnikov~\cite{Sol1964,Sol1968}, but this
seems difficult because the approximate system NSV doesn't fit
directly in the type of equations used in that theory, being a
pseudo-parabolic system. Hence, in this paper we use a different
approach consisting of getting a pressure estimate in negative
fractional Sobolev spaces which is weaker than the classical
$L^p((0,T);L^q(\omega))$ mixed estimates available for the NSE . This
method has been successfully used by Guermond~\cite{Gue2007} to prove
that some special Galerkin methods yield suitable weak solutions. In
particular, the difficulties arising from the boundary condition can
be circumvented by using appropriate fractional powers of the Stokes
operator. It is for this reason that we need a very detailed treatment
of the Stokes operator.
\subsection{Stokes operator and negative fractional Sobolev space} 
We focus now on some special properties of the Stokes operator
$-P\Delta$. As said in the introduction in the case of Dirichlet
boundary conditions~\eqref{eq:bc} we will get uniform bound for the
pressure in negative fractional Sobolev space and we will make an
extensively use of the Stokes operator and its powers. In this
subsection we recall the main definitions and the main properties we
will use in the sequel, following very closely~\cite{Gue2007}.  Let
$\mathcal{H}$ be an Hilbert space and $L^{q}(\R;\mathcal{H})$ with
$q\geq1$ be the associated Bochner space. For $\psi \in
L^{1}(\R;\mathcal{H})$ and $\xi\in\R$ we define the Fourier transform
(with respect to the time variable) $\widehat{\psi}(\xi)$ as follows
\begin{equation*}
  \widehat{\psi}(\xi):= \int_{\R} \psi(t)\, e^{-2i\pi \xi t}\,dt.
\end{equation*}
The previous definition can be extended to the space of tempered
distribution $\mathcal{S}^{'} (\R;\mathcal{H})$. As usual we define
$\mathcal{H}^{\gamma}(\R;\mathcal{H})$ the space of tempered
distributions $v \in \mathcal{S}^{'}(\R;\mathcal{H})$ such that
\begin{equation*}
  \int_{\R} (1 + |\xi|)^{2\gamma}\,
  \|\widehat{v}(\xi)\|^{2}_{\mathcal{H}} \,d\xi<+\infty. 
\end{equation*}
The space $H^{\gamma}(0,T;\mathcal{H})$ is defined by those
distributions that can be extended to $S^{'}(\R;\mathcal{H})$ and
whose extension is in $H^{\gamma}(0,T;\mathcal{H})$. The norm in
$H^{\gamma}(0,T;\mathcal{H})$ is the quotient norm, namely,
\begin{equation*}
  \|v\|_{H^{\gamma}(0,T;\mathcal{H})} = 
  \inf_{ \begin{array}{cc}
      \scriptstyle v=u \text{{\em\ a.e. on }}(0,T)
    \end{array}} \|{v}\|_{H^{\gamma}(\R;\mathcal{H})}.
\end{equation*}
As Hilbert spaces $\mathcal{H}$ we will take the classical Sobolev
spaces $H^s(\Omega)$, allowing also for fractional values of
$s$. Since we are considering functions in a bounded domain with zero
boundary value, we need to be rather precise about the definition of
the function space, when $s\not\in\N$.

The space $H^s(\Omega)$ is defined via the real method of
interpolation as follows
\begin{equation*}
  H^s(\Omega)=\left\{
    \begin{aligned}
      &[L^2(\Omega), H^1(\Omega)]_s &\quad\text{ for }s\in(0,1),
      \\
      & [H^1(\Omega), H^2(\Omega)]_s &\quad\text{ for }s\in(1,2).
    \end{aligned}
\right.
\end{equation*}
Next, to deal with zero traces, we introduce the space $H^s_0(\Omega)$
which is the closure of $\mathcal{D}(\Omega)$ in $H^s(\Omega)$ and for
any $s\in(0,1)$. The space $\widetilde{H^s_0}(\Omega)$ is defined as
follows
\begin{equation*}
\widetilde{H}^s_0(\Omega)=\left\{
  \begin{aligned}
    &    [L^2(\Omega), H^1_0(\Omega)]_s&\qquad \text{for }s\in[0,1],
    \\
    & H^s(\Omega)\cap H^1_0(\Omega)&\qquad \text{for }s\in(1,2].
  \end{aligned}
\right.
\end{equation*}
Finally, for $s>0$ we denote with $\widetilde{H}_0^{-s}(\Omega)$ the
dual space of $\widetilde{H}^s_0(\Omega)$ and by $H^{-s}(\Omega)$ the
space defined via the norm coming from the duality
\begin{equation*}
  \|u\|_{H^{-s}}=\sup_{0\not=w\in\mathcal{D}(\Omega)}\frac{(u,w)}{\|w\|_{H^s}}. 
\end{equation*}
It is well-known that the following spaces coincide with equivalent
norms:
\begin{align*}
  &H^s(\Omega)\cong H^{s}_0(\Omega)\qquad s\in[0,1/2],
  \\
  &H^s(\Omega)\cong \widetilde{H}^{s}_0(\Omega)\qquad s\in[0,1/2),
  \\
  &H^{-s}(\Omega)\cong \widetilde{H}^{-s}_0(\Omega)\qquad
  s\in[0,1/2)\cup(1/2,3/2).
\end{align*}
Since we will not consider the critical case $s=\frac{1}{2}$ we will
always denote these spaces with $H^{s}(\Omega)$. From the
definition~\eqref{eq:Stokes} it follows that the operator $A$ is
positive and self-adjoint. By using the spectral theorem we can define
its fractional powers, specifically we can define $A^{s}$, for any
$s\in\R$, on its domain which we denote with $D(A^{s})$. We have that
the quantity $(u,A^{s}u)$ is a norm on the subspace
$D(A^{\frac{s}{2}})$. The following equivalences of norms will be
frequently used in the sequel: There exists $c_1, c_2>0$ such that
\begin{equation}
  \label{eq:norm}
  \begin{aligned}
    &c_1\|u\|_{\widetilde{H}^s}\leq (u,A^s u)^{\frac{1}{2}}\textrm{ for any }u\in
    D(A^\frac{s}{2}),\,\,s\in(-1/2,2],
    \\
    & (u,A^s u)^{\frac{1}{2}}\leq c_2\|u\|_{\widetilde{H}^s}\textrm{ for any }u\in
    D(A^\frac{s}{2}),\,\,s\in[-2,2].
  \end{aligned}
\end{equation}
For the proof of the inequalities~\eqref{eq:norm} see~\cite{GS2011}.
\subsection{The Navier-Stokes-Voigt model}
In this subsection we recall the main result regarding the
approximating system~\eqref{eq:Voigt}.
\begin{theorem}
  \label{thm:existence-Voigt}
  Let $\be>0$ be fixed and $\ub_0\in H^1_{0,\sigma}(\Omega)$. Then,
  there exists a unique weak solution $\ub\in
  L^{\infty}(0,T;H^1_{0,\sigma}(\Omega))$ and $\pb\in
  L^{2}(0,T;L^2(\Omega))$ (with norms depending on $\be>0$) of the
  initial value boundary problem~\eqref{eq:Voigt}.

  In addition, if $\ub_0\in H^{2}(\Omega)\cap
  H^1_{0,\sigma}(\Omega)$. Then, $\ub\in
  L^{\infty}(0,T;H^2(\Omega)\cap H^1_{0,\sigma}(\Omega))$ and $\pb\in
  L^{2}(0,T;H^1(\Omega))$.
\end{theorem}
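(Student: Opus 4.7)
I would set this up via a Faedo--Galerkin scheme adapted to the pseudo-parabolic structure of~\eqref{eq:Voigt}. Fix $\be>0$. Let $\{w_k\}_{k\in\N}$ be the eigenfunctions of the Stokes operator $A$, orthonormal in $L^2_\sigma(\Omega)$, with eigenvalues $\{\lambda_k\}$, and set $V_N:=\mathrm{span}\{w_1,\dots,w_N\}$. Seeking $u^N(t)=\sum_{k\leq N}c_k(t)w_k$ solving the Galerkin projection of~\eqref{eq:Voigt}, one gets a system of ODEs whose mass matrix is diagonal of the form $I+\be^2\Lambda_N$, with $\Lambda_N=\mathrm{diag}(\lambda_k)_{k\leq N}$. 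Since this matrix is invertible, local existence of a smooth solution follows from Cauchy--Lipschitz.

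The key a priori bound is obtained by testing with $u^N$ and exploiting the cancellation $((u^N\cdot\nabla)u^N,u^N)=0$:
\begin{equation*}
\frac{1}{2}\frac{d}{dt}\bigl(\|u^N\|^2+\be^2\|\nabla u^N\|^2\bigr)+\|\nabla u^N\|^2=0.
\end{equation*}
This yields, uniformly in $N$, the crucial $L^\infty(0,T;H^1_{0,\sigma}(\Omega))$ bound (controlled by $\|\ub_0\|^2+\be^2\|\nabla\ub_0\|^2$) together with the standard $L^2(0,T;H^1_{0,\sigma}(\Omega))$ estimate; in particular the solution is global in $t$. Combining this with a bound on $\partial_tu^N$, obtained by inverting $I+\be^2 A$ in the equation, Aubin--Lions produces strong compactness of $u^N$ in $L^2(0,T;L^2(\Omega))$, which is amply sufficient to pass to the limit in the convective term. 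The pressure is then recovered by applying de Rham's theorem to the residual of the momentum equation against non-solenoidal test functions, which gives $\pb\in L^2(0,T;L^2(\Omega))$.

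Uniqueness rests crucially on the fact that every weak solution belongs to $L^\infty(0,T;H^1_{0,\sigma}(\Omega))$, a regularity unavailable for the NSE: subtracting two solutions $\ub_1,\ub_2$, pairing with $w:=\ub_1-\ub_2$ (a legitimate test function thanks to the $H^1$ regularity), and bounding the trilinear term $((w\cdot\nabla)\ub_2,w)$ by Gagliardo--Nirenberg together with the $L^\infty(H^1)$ bound on $\ub_2$, one obtains a Gronwall inequality for $\|w\|^2+\be^2\|\nabla w\|^2$. For the higher regularity statement, one repeats the energy estimate with $Au^N$ as test function: the Voigt term contributes $\tfrac12\tfrac{d}{dt}(\be^2\|Au^N\|^2)$, and the convective term is controlled by the uniform $L^\infty(H^1)$ estimate together with Sobolev embedding. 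Elliptic regularity promotes this into $\ub\in L^\infty(0,T;H^2(\Omega)\cap H^1_{0,\sigma}(\Omega))$, and reading $\nabla\pb$ off the equation yields $\pb\in L^2(0,T;H^1(\Omega))$.

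The only mildly delicate point, beyond the standard Leray--Hopf machinery, is the pressure reconstruction at the Galerkin level with a bound uniform in $N$: since the projection onto $V_N$ annihilates $\nabla\pb$, the cleanest route is to pass to the limit on the velocity first and then recover $\pb$ through de Rham, using the regularity of $\partial_t\ub$ supplied by the Voigt regularization. Apart from this, everything is strictly easier than for the NSE, because the operator $I+\be^2 A$ is coercive on $H^1_{0,\sigma}(\Omega)$ with constant $\min(1,\be^2)$; this is precisely what makes the scheme both convergent and the limit solution unique, at the cost of constants that blow up as $\be\to0^+$ (which is why the subsequent sections must extract $\be$-independent bounds by a different route).
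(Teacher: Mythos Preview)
Your proposal is correct and follows the standard route. The paper does not actually give a proof of this theorem: it refers to \cite{CLT2006} for existence and uniqueness, states that the higher regularity ``can be proved by standard energy estimates,'' and notes that ``uniform estimates on the Galerkin-approximate system allow to pass to the limit with standard compactness results'' --- which is precisely the Faedo--Galerkin scheme with the $H^1$ energy identity, Aubin--Lions compactness, and the $Au^N$ test for $H^2$ regularity that you outline.
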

We do not prove this theorem. The existence and uniqueness parts can
be found in~\cite{CLT2006} and the regularity can be proved by
standard energy estimates. For the sequel are very relevant the
estimates in terms of $\be>0$ for several norms, see
Lemmas~\ref{lem:est1}-~\ref{lem:ut}. Especially the first one is
crucial also for the existence of weak solutions. In particular,
uniform estimates on the Galerkin-approximate system allow to pass to
the limit with standard compactness results~\cite{CLT2006}. More
delicate is the question of space regularity. Since the NSV system is
pseudo-parabolic there is not an increase of regularity as for the
parabolic equations, but the solutions keep the regularity of the
initial datum, as in the hyperbolic case~\cite{Lar2011,LaTi2010}. An explicit
example of this is given in~\cite{BKR2016}. This motivates the request
that the initial datum belongs to $H^2(\Omega)$, since otherwise
calculations of the next sections would be formal and not justified.
\begin{remark}
  With a procedure of approximation the condition on the initial datum
  could be slightly relaxed.
\end{remark}
\subsection{On slip boundary conditions}
First we recall some definitions and technical facts when dealing with
the NSE and NSV with the Navier-type boundary
conditions~\eqref{eq:bcn}-\eqref{eq:bcvn}, respectively.
\begin{definition}
\label{def:weak-solution}
We say that $u\in L^{\infty}(0,T;L^{2}_\sigma(\Omega))\cap
L^{2}(0,T;H^{1}(\Omega))$, is a (Leray-Hopf) weak solution of the
NSE~\eqref{eq:nse} with boundary conditions~\eqref{eq:bcn} if the two
following hold true:
\begin{equation*}
  \begin{aligned}
    \int_{0}^{T}\int_{\Omega}\big( -u \phi_{t}+\nabla u \nabla
    \phi-(u \cdot\nabla)\, \phi\,u \big)\,dx d t 
    +\int_{0}^{T}\int_{\Gamma}
    u \cdot (\nabla n)^T\cdot \phi\,dS d t
    \\
=\int_{\Omega}u_{0}\phi(0)\,dx,
  \end{aligned}
\end{equation*}
for all vector-fields $\phi\in
C^{\infty}_0([0,T[\times\overline{\Omega})$ such that
$\nabla\cdot\phi=0$ in $[0,T[\times\Omega$, and $\phi\cdot n=0$ on
$[0,T[\times\Gamma$. Moreover, the following energy estimate
\begin{equation}
  \label{eq:Energy-NS-NS} 
  \frac{1}{2}  \|u(t)\|^{2}+\int_{0}^{t}\|\nabla u(s) \|^{2}\,ds+
  \int_{0}^{t}\int_{\Gamma}u \cdot(\nabla 
  n)^T\cdot u \ dS ds\leq\frac{1}{2}\|u_{0}\|^{2},
\end{equation}
is satisfied for all $t\in[0,T]$.
\end{definition}

With this definition we have the following result.
\begin{theorem}
  \label{thm:existence_weak_solutions}
  Let be given any positive $T>0$ and $u_0\in L^2_{\sigma}(\Omega)$,
  then there exists at least a weak solution $u $ of the Navier-Stokes
  equations~\eqref{eq:nse} on $[0,T]$.
\end{theorem}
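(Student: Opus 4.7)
The plan is to construct the weak solution via a Faedo-Galerkin procedure adapted to the Navier-type boundary conditions. First I would fix a Galerkin basis $\{w_k\}_{k\in\N}$ of the space
\[
V_\sigma := \{v\in H^1(\Omega)\,:\, \nabla\cdot v=0,\ v\cdot n=0\text{ on }\Gamma\},
\]
for instance the eigenfunctions of the Stokes operator $-P\Delta$ with boundary conditions \eqref{eq:bcn}; this family is orthonormal in $L^2_\sigma(\Omega)$ and orthogonal in $V_\sigma$. For each $n$ I would seek $u_n(t,x)=\sum_{k=1}^n c_{k,n}(t)\,w_k(x)$ solving the ODE system obtained by testing \eqref{eq:nse} against $w_1,\dots,w_n$, with $u_n(0)$ equal to the $L^2$-projection of $u_0$. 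Local-in-time solvability is immediate by Cauchy--Lipschitz on the coefficients.

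Next I would derive the uniform a priori estimate by testing with $u_n$ itself. Using $u_n\cdot n=0$ on $\Gamma$, and integration by parts in the diffusion term (which produces exactly the boundary contribution appearing in Definition~\ref{def:weak-solution}), one obtains
\[
\frac{1}{2}\frac{d}{dt}\|u_n\|^2+\|\nabla u_n\|^2+\int_\Gamma u_n\cdot(\nabla n)^T\cdot u_n\,dS=0.
\]
The boundary term has no definite sign, so I would bound it by the trace inequality $\|u_n\|_{L^2(\Gamma)}^2\le C\|u_n\|\,\|u_n\|_{H^1}$ followed by Young's inequality and a Korn/Poincaré estimate valid on $V_\sigma$, absorbing a small fraction of $\|\nabla u_n\|^2$ into the dissipation and handling the remainder by Gronwall. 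This produces bounds
\[
u_n\in L^\infty(0,T;L^2_\sigma(\Omega))\cap L^2(0,T;V_\sigma)
\]
uniformly in $n$, from which a standard duality argument, using the trilinear estimate $\|(u_n\cdot\nabla)u_n\|_{V_\sigma^*}\lesssim \|u_n\|_{L^4}^2$, yields a uniform bound on $\partial_t u_n$ in, say, $L^{4/3}(0,T;V_\sigma^*)$.

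With these bounds in hand the passage to the limit is routine: Aubin--Lions compactness gives strong convergence $u_n\to u$ in $L^2((0,T)\times\Omega)$ up to subsequence, together with weak convergence in $L^2(0,T;H^1(\Omega))$ and weak-$\ast$ convergence in $L^\infty(0,T;L^2)$. Strong $L^2$ convergence is enough to pass to the limit in the convective term, and the trace operator being continuous from $H^1(\Omega)$ into $L^2(\Gamma)$ handles the boundary term. The weak formulation of Definition~\ref{def:weak-solution} is then satisfied by $u$. Finally, the energy inequality \eqref{eq:Energy-NS-NS} for the limit follows from the corresponding equality for $u_n$ by lower semicontinuity of norms under weak convergence, jointly with the strong convergence of the boundary trace (arguing on an arbitrary $t\in[0,T]$ via a standard integration against a nonnegative time cut-off, or by working with the continuous representative).

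The main technical point is the treatment of the boundary term $\int_\Gamma u\cdot(\nabla n)^T u$, both at the level of a priori estimates (it has indefinite sign, so one must exploit trace and interpolation inequalities to close the Gronwall argument) and in the passage to the limit in the energy inequality (strong convergence of the traces is needed, which follows from compactness of the trace map $H^1(\Omega)\hookrightarrow L^2(\Gamma)$ combined with Aubin--Lions in time). All the other ingredients are classical and parallel to the Dirichlet case.
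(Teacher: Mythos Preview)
Your Faedo--Galerkin sketch is sound and is the standard route to this result. Note, however, that the paper does not supply its own proof of Theorem~\ref{thm:existence_weak_solutions}: immediately after the statement it simply refers the reader to~\cite[\S6]{XX2007}. So there is no detailed argument in the paper to compare against; you are effectively filling in what the paper outsources.

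One remark on the comparison of approaches. The paper records, via Lemma~\ref{lem:gammalap}, an equivalent weak formulation in which the dissipation term is $\int_\Omega \omega\cdot\curl\phi$ rather than $\int_\Omega\nabla u\cdot\nabla\phi$ plus the boundary integral. In that formulation the energy identity for the Galerkin approximations reads
\[
\frac12\|u_n(t)\|^2+\int_0^t\|\omega_n(s)\|^2\,ds=\frac12\|u_n(0)\|^2,
\]
i.e.\ estimate~\eqref{eq:Energy-NS-NS-bis}, with no indefinite boundary term at all. This is the route taken in~\cite{XX2007} and it sidesteps the Gronwall argument you invoke: the a~priori bounds on $\|u_n\|$ and $\|\omega_n\|_{L^2_tL^2_x}$ are immediate, and control of $\|\nabla u_n\|_{L^2_tL^2_x}$ then follows from the identity $\|\nabla v\|^2=\|\curl v\|^2-\int_\Gamma v\cdot(\nabla n)^T\cdot v\,dS$ combined with the trace estimate, exactly as in the paper's Lemma~\ref{lem:est1-bis}. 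Your version with the $\nabla u\cdot\nabla\phi$ formulation is entirely equivalent and equally correct; it just front-loads the boundary-term bookkeeping into the energy estimate rather than deferring it. Either way one arrives at the same uniform bounds and the same compactness argument, and your treatment of the limit (including the need for strong trace convergence to recover~\eqref{eq:Energy-NS-NS}) is accurate.
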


The proof of global existence of weak solution in the sense of the
Definition~\ref{def:weak-solution} can be found for instance
in~\cite[\S~6]{XX2007}. We observe that an equivalent formulation can
be given.  To this end we recall the following formulas for
integration by parts (see~\cite{BB2009} for the proof).
\begin{lemma}
  \label{lem:gammalap}
  Let $u$ and $\phi$ be two smooth enough vector fields, tangential to
  the boundary $\Gamma$.  Then it follows
  \begin{equation*}
      -\int_\Omega \Delta u\,\phi\,dx=\int_\Omega \nabla u\nabla
      \phi\,dx-\int_{\Gamma} (\omega\times
      n)\,\phi\,dS+\int_{\Gamma}u\cdot(\nabla n)^T\cdot\phi\,dS,
  \end{equation*}
  where $\omega=\curl u$. Moreover, if $\nabla\cdot u=0$, then
  $-\Delta u=\curl\curl u$, and
 \begin{equation*}
   \int_\Omega \curl \omega\,\phi\,dx=        -\int_\Omega \Delta
   u\,\phi\,dx=\int_\Omega \omega(\curl 
   \phi)\,dx+\int_{\Gamma} (\omega\times  n)\,\phi\,dS.
 \end{equation*}
\end{lemma}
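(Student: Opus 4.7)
The plan is to derive both identities from the divergence theorem combined with standard vector-calculus manipulations, making essential use of the tangency of $u$ and $\phi$ to $\Gamma$.

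For the first identity I would start from the classical Green formula
\[
-\int_\Omega \Delta u\cdot\phi\,dx=\int_\Omega \nabla u:\nabla\phi\,dx-\int_\Gamma \bigl((n\cdot\nabla)u\bigr)\cdot\phi\,dS,
\]
and rewrite the boundary integrand in terms of $\omega=\curl u$. Using the standard contraction $\epsilon_{ijk}\epsilon_{ilm}=\delta_{jl}\delta_{km}-\delta_{jm}\delta_{kl}$, a direct computation gives pointwise
\[
(\omega\times n)_i = n_j\partial_j u_i - \partial_i(u\cdot n)+u_j\partial_i n_j.
\]
Because $u$ is tangent to $\Gamma$, the function $u\cdot n$ vanishes identically on $\Gamma$, hence its tangential derivative $\phi_i\partial_i(u\cdot n)$ vanishes on $\Gamma$ as well, since $\phi\cdot n=0$ makes $\phi_i\partial_i$ a tangential operator on $\Gamma$. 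Contracting the pointwise identity with $\phi$ therefore yields
\[
\bigl((n\cdot\nabla)u\bigr)\cdot\phi=(\omega\times n)\cdot\phi-u\cdot(\nabla n)^T\cdot\phi\quad\text{on }\Gamma,
\]
and substituting this into Green's formula gives the first claim.

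For the second identity I would first invoke the vector identity $\Delta u=\nabla\dive u-\curl\curl u$, which under $\dive u=0$ reduces to $-\Delta u=\curl\omega$. Then I would apply the divergence theorem to the vector field $\phi\times\omega$, together with
\[
\dive(\phi\times\omega)=\curl\phi\cdot\omega-\phi\cdot\curl\omega
\]
and the cyclic triple-product relation $(\phi\times\omega)\cdot n=(\omega\times n)\cdot\phi$, to rearrange the surface contribution into the desired form.

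The main obstacle is essentially bookkeeping: one must first extend the outward unit normal $n$ smoothly to a one-sided neighborhood of $\Gamma$ in order to give meaning to the derivatives $\partial_i n_j$ appearing in $(\nabla n)^T$. Any such smooth extension works, because only the contraction of $\nabla n$ against the tangent vectors $u$ and $\phi$ enters the formula, and this pairing is intrinsic to $\Gamma$, corresponding (up to sign) to the Weingarten shape operator. Once this extension is fixed, the whole argument reduces to a few lines of index manipulation, and the result coincides with the integration by parts formula established in~\cite{BB2009}.
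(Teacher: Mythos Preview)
Your argument is correct and is exactly the standard route to these formulas: componentwise Green's formula plus the pointwise identity $(\omega\times n)_i=n_j\partial_j u_i-\partial_i(u\cdot n)+u_j\partial_i n_j$, followed by the tangential-derivative observation, handles the first identity; the divergence theorem applied to $\phi\times\omega$ handles the second. Your remark about extending $n$ smoothly off $\Gamma$ and the Weingarten interpretation of $u\cdot(\nabla n)^T\cdot\phi$ is also to the point.

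There is nothing in the paper to compare against: the authors do not prove Lemma~\ref{lem:gammalap} but simply quote it from~\cite{BB2009}, which is precisely the reference you invoke at the end. Your derivation is essentially what one finds there.

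One small caveat worth making explicit. When you carry out your second computation verbatim,
\[
\int_\Omega\dive(\phi\times\omega)\,dx
=\int_\Omega\big(\omega\cdot\curl\phi-\phi\cdot\curl\omega\big)\,dx
=\int_\Gamma(\phi\times\omega)\cdot n\,dS
=\int_\Gamma(\omega\times n)\cdot\phi\,dS,
\]
you obtain
\[
\int_\Omega\curl\omega\cdot\phi\,dx
=\int_\Omega\omega\cdot\curl\phi\,dx-\int_\Gamma(\omega\times n)\cdot\phi\,dS,
\]
i.e.\ the opposite sign on the boundary term from the one printed in the lemma. This is a typo in the paper (equivalently, the boundary term should read $(n\times\omega)\cdot\phi$); your computation is the correct one, and the discrepancy is harmless for the applications in the paper since in those uses one has $\omega\times n=0$ on $\Gamma$.
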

With the above formulas the weak formulation can be written as follows
\begin{equation*}
  \begin{aligned}
    \int_{0}^{T}\int_{\Omega}\big( -u \phi_{t}+\omega \curl \phi-(u
    \cdot\nabla)\, \phi\,u \big)\,dx d\tau
    =\int_{\Omega}u_{0}\phi(0)\,dx,
  \end{aligned}
\end{equation*}
for all vector-fields $\phi\in
C^{\infty}_0([0,T[\times\overline{\Omega})$ such that
$\nabla\cdot\phi=0$ in $[0,T[\times\Omega$, and $\phi\cdot n=0$ on
$[0,T[\times\Gamma$. Moreover, the following energy estimate holds
true
\begin{equation}
  \label{eq:Energy-NS-NS-bis} 
  \frac{1}{2}
  \|u(t)\|^{2}+\int_{0}^{t}\|\omega(s)\|^{2}\,ds\leq\frac{1}{2}\|u_{0}\|^{2}\qquad
  \forall\,t\in[0,T]. 
\end{equation}
Next, when we consider the NSV system with Navier conditions and we
have the following definition.
\begin{definition}
  \label{def:weak-solution-NSV}
  We say that $u\in L^{\infty}(0,T;L^2_\sigma(\Omega)\cap
  H^{1}(\Omega))$, weak solution of the NSV~\eqref{eq:Voigt} with
  boundary conditions~\eqref{eq:bcvn} if the two following condition
  hold:
  \begin{equation*}
    \begin{aligned}
      \int_{0}^{T}\int_{\Omega}\big( -\ub
      \phi_{t}-\alpha^2\omegab\curl \phi_{t}+\nabla \ub \nabla
      \phi-(\ub \cdot\nabla)\, \phi\,\ub \big)\,dx dt
      \\
      =\int_{\Omega}\ub_{0}\phi(0)+\alpha^2\nabla\ub_{0}\nabla\phi(0)\,dx,
    \end{aligned}
  \end{equation*}
  for all vector-fields $\phi\in
  C^{\infty}_0([0,T[\times\overline{\Omega})$ such that
  $\nabla\cdot\phi=0$ in $[0,T[\times\Omega$, and $\phi\cdot n=0$ on
  $[0,T[\times\Gamma$. Moreover, the following energy estimate
  \begin{equation}
    \label{eq:Energy-NSV-N} 
    \frac{1}{2}  \|\ub(t)\|^{2}+\frac{\alpha^2}{2}\|\omegab(t)\|^2+
    \int_{0}^{t}\|\omegab (s)\|^{2}\,ds
    =\frac{1}{2}\|\ub_{0}\|^{2}+\frac{\alpha^2}{2}\|\omegab_0\|^2,
  \end{equation}
  is satisfied for all $t\in[0,T]$.
\end{definition}
With this definition one can easily prove the following result (for
which we did not find any reference)
\begin{theorem}
  \label{thm:existence_weak_solutions-V}
  Let be given any $T>0$, $\be>0$ and $\ub_0\in
  L^2_{\sigma}(\Omega)\cap H^1(\Omega)$, then there exists a unique
  weak solution $\ub $ of the NSV~\eqref{eq:Voigt} with Navier
  boundary conditions~\eqref{eq:bcvn} on $[0,T]$. Moreover, if
  $\ub_0\in L^2_\sigma(\Omega)\cap H^2(\Omega)$ and $\omegab_0\times
  n=0$ at $\partial\Omega$, then the unique solution belongs also to
  $L^\infty(0,T;H^2(\Omega)\cap L^2_\sigma(\Omega))$.
\end{theorem}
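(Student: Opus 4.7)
The plan is a Faedo--Galerkin scheme adapted to the Navier conditions. I would fix an $L^2_\sigma(\Omega)$-orthonormal basis $\{w_k\}_{k\in\N}$ of smooth eigenfunctions of the Stokes-type operator $A_N$ defined by $A_N u=-P\Delta u$ on $D(A_N):=\{u\in H^2(\Omega)\cap L^2_\sigma(\Omega):(\curl u)\times n=0\text{ on }\Gamma\}$; such a basis exists by compactness of $A_N^{-1}$. Projecting the weak formulation of~\eqref{eq:Voigt} onto $V_n=\mathrm{span}(w_1,\dots,w_n)$ gives an ODE for the coefficients of $\ub^n(t)=\sum_{k=1}^n c_k^n(t)\,w_k$; because the mass form $(\cdot,\cdot)+\be^2(\curl\cdot,\curl\cdot)$ is diagonal on $\{w_k\}$ with entries $1+\be^2\lambda_k>0$, the system is non-degenerate and solvable locally in time.

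To prolong the solution to $[0,T]$ I would test with $\ub^n$. By Lemma~\ref{lem:gammalap} and the compatibility $(\curl w_k)\times n=0$, the viscous term gives exactly $\|\omegab^n\|^2$ with no boundary contribution, and the trilinear term vanishes thanks to $\nabla\cdot\ub^n=0$ and $\ub^n\cdot n=0$. This recovers the discrete version of~\eqref{eq:Energy-NSV-N}, hence uniform bounds of $\ub^n$ in $L^\infty(0,T;L^2_\sigma(\Omega)\cap H^1(\Omega))$ and $\omegab^n$ in $L^2(0,T;L^2(\Omega))$. A bound on $\partial_t\ub^n$ in a negative-order Sobolev space, combined with Aubin--Lions, yields strong $L^2((0,T)\times\Omega)$ convergence, which suffices to pass to the limit in the nonlinearity and recover a weak solution in the sense of Definition~\ref{def:weak-solution-NSV}; the energy identity~\eqref{eq:Energy-NSV-N} follows by applying the same test at the limit, using that the projection of $\ub_0$ converges strongly in $H^1$. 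For uniqueness, writing the equation for $v=\ub_1-\ub_2$ and testing with $v$ leaves only the term $\int_\Omega(v\cdot\nabla)\ub_1\cdot v\,dx$, estimated by $\|v\|_4^2\|\nabla\ub_1\|\leq C\|v\|^{1/2}\|\curl v\|^{3/2}\|\nabla\ub_1\|$ and absorbed via Young, before invoking Gr\"onwall with constants depending on $\be$.

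For the $H^2$-regularity claim, under $\ub_0\in D(A_N)$ I would test the Galerkin system instead with $A_N\ub^n$; the left-hand side produces $\tfrac12\tfrac{d}{dt}(\|\omegab^n\|^2+\be^2\|A_N\ub^n\|^2)+\|A_N\ub^n\|^2$, while the nonlinearity is controlled through
\begin{equation*}
  |((\ub^n\cdot\nabla)\ub^n,A_N\ub^n)|\leq \|\ub^n\|_6\,\|\nabla\ub^n\|_3\,\|A_N\ub^n\|\leq C\,\|\nabla\ub^n\|^{3/2}\,\|A_N\ub^n\|^{3/2},
\end{equation*}
which Young absorbs into $\tfrac12\|A_N\ub^n\|^2$; since $\|\nabla\ub^n\|$ is globally bounded by the previous step, Gr\"onwall yields a uniform bound on $\ub^n$ in $L^\infty(0,T;D(A_N))$, transferred to $\ub$ by weak-$\ast$ compactness. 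The main obstacle --- and the reason the compatibility $\omegab_0\times n=0$ is imposed --- is that~\eqref{eq:Voigt} is pseudo-parabolic and offers no smoothing in time, so the initial regularity has to be propagated; this compatibility is exactly what makes $\ub_0\in D(A_N)$ and keeps $\be^2\|A_N\ub^n(0)\|^2$ uniformly bounded in $n$, which is what the Gr\"onwall step requires.
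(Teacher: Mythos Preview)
Your proposal is correct and follows essentially the same approach that the paper indicates: the paper does not give a detailed proof but only remarks that ``the proof of this result goes through the a priori estimates obtained testing with $\ub$ and $-P\Delta\ub$, see especially those obtained in Lemmas~\ref{lem:est1-bis}--\ref{lem:ut-bis}'', and your Galerkin scheme with eigenfunctions of $A_N$, testing first with $\ub^n$ for existence and then with $A_N\ub^n$ for the $H^2$ bound, is precisely this. Your treatment is in fact more detailed than the paper's sketch; in particular, your explanation of why the compatibility condition $\omegab_0\times n=0$ is needed (to place $\ub_0$ in $D(A_N)$ and keep $\be^2\|A_N\ub^n(0)\|^2$ bounded, since the pseudo-parabolic equation does not smooth) makes explicit a point the paper leaves implicit.
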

The proof of this result goes through the a priori estimates obtained
testing with $\ub$ and $-P\Delta\ub$, see especially those obtained in
Lemmas~\ref{lem:est1-bis}-\ref{lem:ut-bis}.
\section{A priori estimates independent of $\be$ for solutions of the
  Navier-Stokes-Voigt model}
\label{sec:Prio}
In this section we are going to prove the main $\be$-independent
\emph{a priori} estimates needed in the proof of
Theorems~\ref{thm:main}-\ref{thm:main2}.
%
\subsection{The case of vanishing Dirichlet Boundary
  Conditions}
The first estimate we prove is the standard energy-type estimate one
obtained by multiplying equations~\eqref{eq:Voigt} by $\ub$ and by
integrating by parts over $\Omega$. We have the following result.
\begin{lemma}
  \label{lem:est1}
  Let $\ub$ be a weak solution of the
  NSV~\eqref{eq:Voigt}-\eqref{eq:ida} with initial datum $\ub_0\in
  H^1_{0,\sigma}(\Omega)$. Then, for any $t\in(0,T)$
  \begin{equation}
    \label{eq:en1}
    \|\ub(t)\|^2+\be^2\|\nabla
    \ub(t)\|^2+2\int_0^t\|\nabla\ub(s)\|^2\,ds=\|\ub_0\|^2+\be^2\|\nabla\ub_0\|^2. 
\end{equation}
\end{lemma}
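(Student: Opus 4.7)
The plan is to derive~\eqref{eq:en1} by using $\ub$ itself as a test function in the weak formulation of~\eqref{eq:Voigt}--\eqref{eq:ida} and then integrating in time. First I check that this is legitimate. Theorem~\ref{thm:existence-Voigt} gives $\ub\in L^{\infty}(0,T;H^{1}_{0,\sigma}(\Omega))$, and rewriting the momentum equation in the form $(I+\be^{2}A)\ub_{t}=-A\ub-P\bigl((\ub\cdot\nabla)\,\ub\bigr)$ shows that the right-hand side lies in $L^{\infty}(0,T;H^{-1}(\Omega))$, since $\ub\otimes\ub\in L^{\infty}(0,T;L^{3}(\Omega))$ by the Sobolev embedding $H^{1}\hookrightarrow L^{6}$. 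Inverting the elliptic operator $(I+\be^{2}A)\colon H^{1}_{0,\sigma}\to H^{-1}_{\sigma}$ then gives $\ub_{t}\in L^{\infty}(0,T;H^{1}_{0,\sigma}(\Omega))$, which makes all duality pairings below classical. Equivalently, every manipulation can be performed on the Galerkin approximation used in~\cite{CLT2006} and then passed to the limit using the strong convergences it provides.

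Testing the equation against $\ub$, the two time-derivative terms combine, after integration by parts in the Voigt-regularizing term (whose boundary contribution vanishes because $\ub\in H^{1}_{0,\sigma}(\Omega)$), into
\[
(\ub_{t},\ub)-\be^{2}(\Delta\ub_{t},\ub)=\frac{1}{2}\frac{d}{dt}\|\ub\|^{2}+\frac{\be^{2}}{2}\frac{d}{dt}\|\nabla\ub\|^{2}.
\]
Similarly, the viscous term yields $-(\Delta\ub,\ub)=\|\nabla\ub\|^{2}$; the convective term vanishes by the usual identity
\[
\int_{\Omega}(\ub\cdot\nabla)\,\ub\cdot\ub\,dx=\frac{1}{2}\int_{\Omega}\ub\cdot\nabla|\ub|^{2}\,dx=0,
\]
obtained from $\dive\ub=0$ and the vanishing trace of $\ub$; and the pressure term $(\nabla\pb,\ub)$ vanishes for the same reason. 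Summing these contributions produces the pointwise-in-time identity
\[
\frac{1}{2}\frac{d}{dt}\bigl(\|\ub\|^{2}+\be^{2}\|\nabla\ub\|^{2}\bigr)+\|\nabla\ub\|^{2}=0,
\]
and integrating from $0$ to $t$ using the initial datum from~\eqref{eq:ida} gives exactly~\eqref{eq:en1}.

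The only delicate point, rather than a deep obstacle, is justifying the energy \emph{equality} (rather than a mere inequality) for a weak solution. Here the hyperbolic-like character of the Voigt regularization works in our favour: because $\ub$ and $\ub_{t}$ live in the same spatial space $H^{1}_{0,\sigma}(\Omega)$, no Lions--Magenes-type argument is needed to bridge a gap between the natural energy space and the space to which $\ub_{t}$ belongs, and the manipulations above become exact identities once either the resolvent regularity of $\ub_{t}$ established in the first paragraph or, equivalently, the Galerkin approximation is invoked.
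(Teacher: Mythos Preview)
Your proof is correct and follows exactly the approach indicated in the paper: multiply the momentum equation by $\ub$ and integrate by parts over $\Omega$. The paper does not spell out any of the details you provide (the regularity of $\ub_t$, the vanishing of the convective and pressure terms, or the reason the result is an equality), so your write-up is in fact more complete than the original.
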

Then we prove a simple weighted (in $\be$) estimate for $\ub_t$ which
will be useful to pass to the limit to get the local energy
inequality~\eqref{eq:GEI}.
\begin{lemma}
  \label{lem:ut}
  Let $\ub$ be a weak solution of the
  NSV~\eqref{eq:Voigt}-\eqref{eq:ida} with initial datum $\ub_0\in
  H^1_{0,\sigma}(\Omega)$.  Then, there exists $c>0$ independent of
  $\be$ such that for any $t\in(0,T)$
\begin{equation*}
  \be^3\|\nabla\ub\|^2+\be^3\int_0^T\|\ub_t(s)\|^2\,ds+
\be^5\int_0^T\|\nabla\ub_t(s)\|^2\,ds\leq  c.
\end{equation*}
\end{lemma}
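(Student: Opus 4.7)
The plan is to derive an energy identity by testing the Voigt momentum equation~\eqref{eq:Voigt} against $\ub_t$; this is legitimate because $\ub_0 \in H^2$ together with Theorem~\ref{thm:existence-Voigt} supplies the needed regularity. Since $\dive \ub_t = 0$ and $\ub_t$ vanishes on $\Gamma$, the pressure contribution drops out and an integration by parts on the two Laplacian terms produces
\[
  \|\ub_t\|^2 + \be^2\|\nabla\ub_t\|^2 + \tfrac{1}{2}\tfrac{d}{dt}\|\nabla\ub\|^2 = -\int_\Omega (\ub\cdot\nabla)\ub\cdot\ub_t\,dx.
\]
The crucial step is to move a derivative onto $\ub_t$ on the right: using $\dive \ub = 0$, one rewrites the nonlinear term as $\int_\Omega (\ub\otimes\ub):\nabla\ub_t\,dx$, so that Cauchy--Schwarz and Young's inequality with weight $\be^2$ give the bound $\tfrac{\be^2}{2}\|\nabla\ub_t\|^2 + \tfrac{1}{2\be^2}\|\ub\|_{L^4}^4$; the first piece is absorbed on the left, exploiting the pseudo-parabolic structure of the equation.

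After integrating the resulting inequality in time, everything reduces to controlling $\int_0^T \|\ub\|_{L^4}^4\,ds$. I plan to use the three-dimensional Gagliardo--Nirenberg inequality $\|\ub\|_{L^4}^4 \leq C\|\ub\|\|\nabla\ub\|^3$ together with the uniform estimates from Lemma~\ref{lem:est1}: $\|\ub\|_{L^\infty((0,T);L^2)}\leq C$, $\|\nabla\ub\|_{L^\infty((0,T);L^2)}\leq C/\be$, and $\|\nabla\ub\|_{L^2((0,T)\times\Omega)}\leq C$. Splitting $\|\nabla\ub\|^3 = \|\nabla\ub\|\cdot\|\nabla\ub\|^2$ and estimating the first factor in $L^\infty_t$ and the second in $L^1_t$ yields $\int_0^T \|\ub\|_{L^4}^4\,ds \leq C/\be$, whence
\[
  \int_0^T \|\ub_t\|^2\,ds + \tfrac{\be^2}{2}\int_0^T \|\nabla\ub_t\|^2\,ds \leq \tfrac{1}{2}\|\nabla\ub_0\|^2 + \tfrac{C}{\be^3}.
\]
Multiplying this estimate by $\be^3$ produces the two integral bounds in the statement. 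The pointwise term $\be^3\|\nabla\ub\|^2 \leq c$ is automatic from Lemma~\ref{lem:est1} since $\be$ ranges in a bounded interval.

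The step I expect to be most delicate is the choice of weight in Young's inequality on the nonlinear term: any weight larger than $\be^2$ fails to absorb $\be^2\|\nabla\ub_t\|^2$, while the naive Young bound on the unrewritten form $\int ((\ub\cdot\nabla)\ub)\cdot\ub_t$ produces a worse $\be$-rate for $\int_0^T \|\ub_t\|^2\,ds$, and the final multiplication by $\be^3$ no longer closes. The precise scalings $\be^3,\be^5$ in the statement are therefore dictated by, and only reachable through, the extra $\be^2\|\nabla\ub_t\|^2$ gained from the pseudo-parabolic Voigt regularization.
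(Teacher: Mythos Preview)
Your argument is correct and reaches the stated bounds, but it follows a different path from the paper's proof. The paper does \emph{not} integrate by parts in the nonlinear term; it keeps $\int (\ub\cdot\nabla)\ub\cdot\ub_t$ and estimates it by $\|\ub\|_4\|\nabla\ub\|\,\|\ub_t\|_4$, then applies Gagliardo--Nirenberg to both $L^4$ factors (so that $\|\ub_t\|_4\leq c\|\ub_t\|^{1/4}\|\nabla\ub_t\|^{3/4}$) and closes with a three-term Young inequality with exponents $8/3,\,8,\,2$. Your route---shifting the derivative onto $\ub_t$ to obtain $\int(\ub\otimes\ub):\nabla\ub_t$ and then splitting via a single weighted Young inequality---is more streamlined: only one interpolation is needed, the absorption step is transparent, and the way the pseudo-parabolic term $\be^2\|\nabla\ub_t\|^2$ drives the scaling is made explicit. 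The paper's approach, in turn, shows that the same scaling can be recovered without the preliminary integration by parts, so your closing remark that the ``unrewritten'' form necessarily loses the sharp rate is a little too strong: it fails only if one refuses to bring $\nabla\ub_t$ back in via interpolation of $\|\ub_t\|_4$. Both arguments ultimately rely on the same uniform inputs from Lemma~\ref{lem:est1}.
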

\begin{proof}
  Let us multiply the momentum equation in~\eqref{eq:Voigt} by
  $\be^3\ub_t$. By integrating by parts over $(0,T)\times\Omega$ we
  get
  \begin{equation*}
\begin{aligned}
  \frac{\be^3}{2}\|\nabla\ub(t)\|^2+&\int_0^t\be^3\|\ub_t(s)\|^2
  +{\be^5}\|\nabla\ub_t(t)\|^2\,ds
  \\
  &\leq\be^3\int_0^t\int_{\Omega}|\ub|\, |\nabla \ub|\,|\ub_t|\,dx ds
  +\frac{\be^3}{2}\|\nabla \ub_0\|^2.
\end{aligned} 
\end{equation*}
We estimate the right-hand side by using H\"older and the standard
Gagliardo-Nirenberg type interpolation inequality (of $L^4(\Omega)$
with $L^2(\Omega)$ and $H^1_0(\Omega)$ ) as follows
\begin{align*}
  &\alpha^3 \int_0^t\int_{\Omega}|\ub| |\nabla \ub| |\ub_t|\,dx ds\leq
  \alpha^3\int_0^t\|\ub\|_{4}\|\nabla \ub\| \|\ub_t\|_{4}\,ds
  \\
  &\qquad\leq c\alpha^3\int_0^t\|\ub\|^{\frac{1}{4}}\|\nabla
  \ub\|^{\frac{7}{4}}\|\nabla
  \ub_t\|^{\frac{3}{4}}\|\ub_t\|^{\frac{1}{4}}\,ds
  \\
  &\qquad\leq c \alpha^3 (\|\ub_0\|^{2}+\alpha^{2}\|\nabla
  \ub_0\|^{2})^{\frac{1}{8}}\int_0^t\|\nabla
  \ub\|^{\frac{7}{4}}\|\nabla
  \ub_t\|^{\frac{3}{4}}\|\ub_t\|^{\frac{1}{4}}\,ds,
\end{align*}
where in the second line we use, by Lemma~\ref{lem:est1}, the fact the
$\ub$ is bounded in $L^\infty(0,T;L^2(\Omega))$ independently of
$\be$.  Now, we use Young inequality with $p_1=\frac{8}{3}$, $p_2=8$
and $p_3={2}$ and we get
\begin{align*}
  &\alpha^3 \int_0^t\int_{\Omega}|\ub||\nabla \ub||\ub_t|\,dx ds
  \\
  &\qquad\leq c(\|\ub_0\|^{2}+\alpha^{2}\|\nabla
  \ub_0\|^{2})^{\frac{1}{4}} \int_0^t\alpha^{\frac{3}{2}}\|\nabla
  \ub\|^{\frac{3}{2}}\|\nabla \ub\|^{2}\,ds+
  \frac{\alpha^3}{2}\int_0^t\|u_t\|^2\,ds
  \\
  &\qquad+\frac{\alpha^5}{2}\int_0^t\|\nabla u_t\|^2\,ds.
\end{align*}
Then, by using~\eqref{eq:en1} we have that
$\alpha^{\frac{3}{2}}\|\nabla \ub(s)\|^{\frac{3}{2}}
\leq\big(\|\ub_0\|^{2}+\alpha^{2}\|\nabla\ub_0\|^{2}\big)^{\frac{3}{4}}$,
and consequently we get
\begin{equation*}
  \begin{aligned}
    \frac{\be^{3}}{2}\|\nabla\ub(t)\|^2+&\int_0^T\be^{3}\|\ub_t(s)\|^2\,ds
    +\be^5\int_0^T\|\nabla\ub_t(s)\|^2\,ds
    \\
    &\leq\frac{\be^{3}}{2}\|\nabla\ub_0\|^2+c(\|\ub_0\|^{2}+\alpha^{2}\|\nabla
    \ub_0\|^{2}) \int_{0}^{T}\|\nabla\ub(s)\|^2\,ds
    \\
    &\leq
    \frac{\be^{3}}{2}\|\nabla\ub_0\|^2+c(\|\ub_0\|^{2}+\alpha^{2}\|\nabla
    \ub_0\|^{2})^2\leq c,
\end{aligned} 
\end{equation*}
if $0<\alpha<1$. We can suppose that $\alpha$ is always smaller than
one, since we are interested in the behavior as $\alpha\to0^+$.  In
particular, for our purposes the most relevant estimate is that there
exists a constant $c$ independent of $\alpha>0$ such that
\begin{equation}
  \label{eq:en2}
  \be^{3}\int_0^T\|\ub_t(s)\|^2\,ds\leq c.
\end{equation}
\end{proof}
\subsubsection{Estimates in fractional Sobolev space}
This section is devoted to the proof of the estimate for the pressure
in the case of Dirichlet boundary conditions. We follow the same line
of~\cite{Gue2007}.  Let be $p,q,\bar{r}$ and $s$ real positive numbers
such that the following relations hold:
\begin{equation}
  \frac{2}{p}+\frac{3}{q}=4,\quad p\in[1,2],\quad
  q\in[1,\frac{3}{2}],\quad \frac{s}{3}:=\frac{1}{q}-\frac{1}{2},\quad
  \bar{r}:=\frac{1}{p}-\frac{1}{2}. 
\label{4.1}
\end{equation}
If $p,q$ satisfy~\eqref{4.1} we have by Sobolev embedding that
\begin{equation*}
  L^{p}(0,T;L^{q}(\Omega))\subset H^{-r}(0,T;H^{-s}(\Omega)).
\end{equation*}
The first lemma we recall is an estimate for the nonlinear term in
negative-fractional Sobolev spaces. See~\cite[Lemma 3.4]{Gue2007}.
\begin{lemma}
  \label{lem:non}
  Let $\ub$ be a weak solution to NSV, then for any
  $s\in\left[\frac{1}{2},\frac{3}{2}\right]$, there exists a constant
  $c>0$, independent of $\be$ such that
  \begin{equation}
    \label{eq:NL}
    \| (\ub\cdot\nabla)\,\ub\|_{H^{-r}(0,T;H^{-s})}\leq c.
  \end{equation}
\end{lemma}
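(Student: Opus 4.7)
The plan is to exploit the embedding $L^{p}(0,T;L^{q}(\Omega))\hookrightarrow H^{-r}(0,T;H^{-s}(\Omega))$ already recorded in the excerpt, so it suffices to exhibit a uniform (in $\be$) bound
\[
\|(\ub\cdot\nabla)\,\ub\|_{L^{p}(0,T;L^{q}(\Omega))}\le c
\]
for exponents $(p,q)$ linked to the target $s$ via the constraints in~\eqref{4.1}. Parametrize $s\in[\tfrac12,\tfrac32]$ by $q\in[1,\tfrac32]$ through $s=3/q-3/2$, and take $p\in[1,2]$ so that $2/p+3/q=4$; then $r=1/p-1/2>0$, which keeps us away from the critical endpoint in the time variable and sidesteps the ambiguity in the definition of $H^{1/2}_0$ in the space variable.

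The second step is a straightforward Hölder argument in space and time. Decompose $1/q=1/q_{1}+1/2$ and $1/p=1/p_{1}+1/2$, so that
\[
\|(\ub\cdot\nabla)\,\ub\|_{L^{p}(0,T;L^{q}(\Omega))}\;\le\;\|\ub\|_{L^{p_{1}}(0,T;L^{q_{1}}(\Omega))}\,\|\nabla\ub\|_{L^{2}(0,T;L^{2}(\Omega))}.
\]
The cancellation $\frac{2}{p_{1}}+\frac{3}{q_{1}}=\bigl(\frac{2}{p}+\frac{3}{q}\bigr)-\frac{5}{2}=\frac{3}{2}$ is precisely the Ladyzhenskaya scaling that holds on the interpolation line $L^{\infty}(0,T;L^{2}(\Omega))\cap L^{2}(0,T;H^{1}_{0,\sigma}(\Omega))$, and the corresponding $q_{1}$ lies in $[2,6]$. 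Applying Gagliardo--Nirenberg pointwise in time and invoking the $\be$-independent energy bound from Lemma~\ref{lem:est1} for both $\|\ub\|_{L^{\infty}(L^{2})}$ and $\|\nabla\ub\|_{L^{2}(L^{2})}$ yields $\|\ub\|_{L^{p_{1}}(0,T;L^{q_{1}}(\Omega))}\le c$ independently of $\be$, and combined with the $L^{2}(L^{2})$ gradient bound one closes the chain.

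The main obstacle I anticipate is not the Hölder chain, but the justification of the Bochner-level embedding $L^{p}(H^{-s})\hookrightarrow H^{-r}(H^{-s})$ in a bounded domain with the quotient definition of $H^{\gamma}(0,T;\mathcal{H})$ used here, together with the care needed when $s\in\{1/2\}$: on $\mathbb{R}^{3}$ the embedding $L^{q}\hookrightarrow H^{-s}$ is just the dual Sobolev $H^{s}\hookrightarrow L^{q'}$ with $s/3=1/q-1/2$, and the temporal factor is the dual of the 1D Sobolev inequality $H^{r}(\mathbb{R})\hookrightarrow L^{p'}(\mathbb{R})$ with $r=1/p-1/2$. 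In a bounded smooth $\Omega$ one has to work with $\widetilde{H}^{-s}_{0}$, extend by zero, and appeal to the norm equivalences of~\eqref{eq:norm} together with the identifications recorded right after them; this is exactly the technical content of Lemma~3.4 in~\cite{Gue2007}, and I would follow that route verbatim, treating the $s=1/2$ case by a standard interpolation between neighbouring values in $(0,1/2)\cup(1/2,3/2)$.
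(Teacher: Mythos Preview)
Your proposal is correct and follows essentially the same route as the paper: one first obtains a uniform $L^{p}(0,T;H^{-s}(\Omega))$ bound on the nonlinearity from the energy-level control on $\ub$ (the paper phrases this step ``by Sobolev embedding and interpolation\ldots by duality,'' while you factor it through $L^{p}(0,T;L^{q}(\Omega))$ via H\"older and Gagliardo--Nirenberg, which is equivalent and in fact cleaner), and then passes to $H^{-r}$ in time by extending by zero and applying Hausdorff--Young together with H\"older, exactly as the paper does. The care you flag concerning the endpoint $s=1/2$ and the quotient-norm definition of $H^{\gamma}(0,T;\mathcal{H})$ is appropriate, and the paper likewise defers these technicalities to~\cite{Gue2007}.
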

\begin{proof}
  By Sobolev embedding and interpolation inequality we have that if
  $\ub\in L^\infty(0,T;L^2(\Omega))\cap L^2(0,T;H^1(\Omega))$, then by
  duality
\begin{equation}
  \label{eq:l1}
  \|(\ub\cdot\nabla)\,\ub\|_{L^{p}(0,T;H^{-s})}\leq c,
\end{equation}
for any $p\in [1,2]$ and $s\in\left[\frac{1}{2},\frac{3}{2}\right]$
such that $\frac{2}{p}+s=\frac{7}{2}$.  In particular, in the above
lemma is exactly the same which is valid for Leray-Hopf weak solutions
to the NSE~\eqref{eq:nse}.

Then, we extend $(\ub\cdot\nabla)\,\ub$ to $0$ out of $(0,T)$ and we
take the Fourier transform with respect to the time variable. By using
H\"older inequality and Hausdorff-Young inequality we
get~\eqref{eq:NL}.
\end{proof}
Then, we use this information on the convective term, when considered
as a right-hand side, to infer further properties of
$(\ub,p^\be)$. This is more or less the same approach as
in~\cite{Gue2007} and it is based on an extension of classical results
on fractional derivatives. The relevant point is that one can use
Hilbert-space techniques at the price of working with negative
norms. The following Lemma is a refined estimate of the velocity in
fractional Sobolev spaces.
\begin{lemma}
\label{lem:s} 
For any $\chi\in\left[\frac{1}{4},\frac{1}{2}\right)$ and
$\tau<\bar{\tau}=\frac{2}{5}(1+\chi)$ there exists $c>0$, independent
of $\be$, such that
\begin{equation}
  \label{eq:4.7}
  \|\ub\|_{H^{\tau}(0,T;H^{-\chi})}\leq c.
\end{equation}
\end{lemma}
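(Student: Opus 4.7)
I would exploit the pseudoparabolic structure of~\eqref{eq:Voigt} to obtain a uniform (in $\be$) bound on $\ub_{t}$ in a negative-order Bochner-Sobolev space, and then recover the desired fractional time regularity by interpolation with the energy bound.

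\textbf{Step 1 (projected form of NSV).} Applying the Leray projector $P$ to the momentum equation of~\eqref{eq:Voigt} and using $A=-P\Delta$, the NSV system can be rewritten as
\begin{equation*}
(I+\be^{2}A)\,\ub_{t}+A\ub+P\bigl[(\ub\cdot\nabla)\ub\bigr]=0,
\end{equation*}
so that
\begin{equation*}
\ub_{t}=-(I+\be^{2}A)^{-1}A\ub-(I+\be^{2}A)^{-1}P\bigl[(\ub\cdot\nabla)\ub\bigr].
\end{equation*}
Since $A$ is positive and self-adjoint, $(I+\be^{2}A)^{-1}$ is self-adjoint with spectrum contained in $(0,1]$ and, by the spectral theorem, acts as a contraction on every Hilbert-scale space $D(A^{\sigma})$, $\sigma\in\R$, \emph{uniformly in~$\be$}; via the norm equivalences~\eqref{eq:norm} the same is true on the Sobolev scale $H^{s}(\Omega)$ in the admissible range.

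\textbf{Step 2 ($\be$-independent bound on $\ub_{t}$).} From Lemma~\ref{lem:est1}, $\ub$ is uniformly bounded in $L^{2}(0,T;H^{1}_{0,\sigma}(\Omega))$, so $A\ub$ is uniformly bounded in $L^{2}(0,T;H^{-1}(\Omega))\hookrightarrow L^{2}(0,T;H^{-3/2}(\Omega))$. Moreover, the estimate~\eqref{eq:l1} used in the proof of Lemma~\ref{lem:non} (with the admissible pair $p=2,\,s=3/2$, equivalently $\ub\otimes\ub\in L^{2}(L^{3/2})$ and hence $(\ub\cdot\nabla)\ub\in L^{2}(W^{-1,3/2})\hookrightarrow L^{2}(H^{-3/2})$) provides the uniform bound $(\ub\cdot\nabla)\ub\in L^{2}(0,T;H^{-3/2}(\Omega))$. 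Using the contractivity from Step~1, I obtain
\begin{equation*}
\|\ub_{t}\|_{L^{2}(0,T;H^{-3/2}(\Omega))}\le c
\end{equation*}
with $c$ independent of $\be$; equivalently, $\ub$ is uniformly bounded in $H^{1}(0,T;H^{-3/2}(\Omega))$.

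\textbf{Step 3 (interpolation).} Combining the two uniform bounds
\begin{equation*}
\ub\in L^{2}(0,T;H^{1}_{0,\sigma}(\Omega))\cap H^{1}(0,T;H^{-3/2}(\Omega)),
\end{equation*}
after a standard extension in the time variable the classical Lions-Magenes interpolation of Bochner-Sobolev spaces yields, for every $\theta\in[0,1]$,
\begin{equation*}
\ub\in H^{\theta}\bigl(0,T;\,[H^{1}(\Omega),H^{-3/2}(\Omega)]_{\theta}\bigr)=H^{\theta}\bigl(0,T;H^{1-5\theta/2}(\Omega)\bigr),
\end{equation*}
with norm bounded uniformly in $\be$. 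The choice $\theta=\bar\tau=\tfrac{2}{5}(1+\chi)$ produces the spatial index $1-\tfrac{5}{2}\bar\tau=-\chi$, so $\ub$ is uniformly bounded in $H^{\bar\tau}(0,T;H^{-\chi}(\Omega))$; the continuous embedding $H^{\bar\tau}\hookrightarrow H^{\tau}$ for $\tau<\bar\tau$ yields~\eqref{eq:4.7}.

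\textbf{Main obstacle.} The delicate point is Step~2. A direct energy estimate for $\ub_{t}$ in NSV (as in Lemma~\ref{lem:ut}) only produces the $\be$-weighted control $\be^{3/2}\ub_{t}\in L^{2}((0,T)\times\Omega)$, which degenerates as $\be\to 0^{+}$. What rescues the argument is that, on the negative-order Hilbert scale, the resolvent $(I+\be^{2}A)^{-1}$ is a contraction uniformly in $\be$: this absorbs the offending $\be^{2}A$ factor and reduces the estimate to the two $\be$-independent bounds on $A\ub$ (from the energy estimate) and on the convective term (from Lemma~\ref{lem:non}).
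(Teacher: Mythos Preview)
Your argument is correct and takes a genuinely different, more elementary route than the paper's. The paper follows Guermond's method: it extends all functions in the time variable, takes the Fourier transform in $t$, tests the transformed equation with $A^{-\chi}\widehat{u}^{\be}$, and then crucially exploits that $(A\widehat{u}^{\be},A^{-\chi}\widehat{u}^{\be})$ is real and nonnegative so that \emph{taking the imaginary part} kills both the dissipative and the Voigt contributions simultaneously. This yields a weighted pointwise-in-$\xi$ inequality, which is then combined with a spatial interpolation of $H^{3/2-2\chi}$ between $H^{-\chi}$ and $H^{1}$ and integrated in $\xi$; the loss $\tau<\bar\tau$ stems from the fact that the right-hand side is only placed in $H^{-r}(0,T;H^{-3/2})$ for $r>0$.

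Your approach bypasses all of this. You observe that the spectral calculus makes $(I+\be^{2}A)^{-1}$ a contraction on the whole scale $D(A^{\sigma})$, \emph{uniformly in $\be$}, and use this to bound $\ub_{t}$ directly in $L^{2}(0,T;H^{-3/2})$ (via the endpoint bound $(\ub\cdot\nabla)\ub\in L^{2}(0,T;H^{-3/2})$, which is indeed available from $\ub\otimes\ub\in L^{2}(L^{3/2})$ and $H^{1/2}\hookrightarrow L^{3}$). A single Lions--Magenes interpolation between $L^{2}(0,T;H^{1})$ and $H^{1}(0,T;H^{-3/2})$ then finishes the job, and in fact delivers the \emph{endpoint} $\tau=\bar\tau$, slightly improving on the statement.

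One small technical caveat worth flagging: the norm equivalence~\eqref{eq:norm} only guarantees $\|u\|_{\widetilde H^{s}}\simeq (u,A^{s}u)^{1/2}$ two-sidedly for $s\in(-1/2,2]$, so the transfer of the contraction from $D(A^{\sigma})$ to $H^{-3/2}$ is not immediate at that index. The clean fix is to run Steps~1--3 entirely in the Stokes scale: bound $P[(\ub\cdot\nabla)\ub]$ in $D(A^{-3/4})$ by duality with $D(A^{3/4})\hookrightarrow H^{3/2}$, obtain $\ub_{t}\in L^{2}(0,T;D(A^{-3/4}))$, interpolate with $\ub\in L^{2}(0,T;D(A^{1/2}))$ to get $\ub\in H^{\bar\tau}(0,T;D(A^{-\chi/2}))$, and only then invoke~\eqref{eq:norm} at the admissible index $-\chi>-1/2$ to return to $H^{-\chi}$. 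With this adjustment your proof is complete and noticeably shorter than the paper's Fourier-in-time argument.
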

\begin{proof}
  We write the system~\eqref{eq:Voigt} in the following way
  \begin{align}
    \label{eq:app2}
    \ub_t-\be^2\Delta\ub_t-\Delta
    \ub+\nabla\pb&=-(\ub\cdot\nabla)\,\ub&\qquad\text{in
    }(0,T)\times\Omega,
    \\
    \nabla\cdot\ub&=0&\qquad\text{in }(0,T)\times\Omega.
  \end{align}
  By applying $P$ to the equations~\eqref{eq:app2} we get
  \begin{equation}
    \label{eq:app3}
    \ub_t+\be^2A\ub_t+A\ub=-P((\ub\cdot\nabla)\,\ub)\qquad\text{in }(0,T).
  \end{equation}
  where $A$ is the Stokes operator.  Since we are going to use Fourier
  transform with respect to time we need to extend all the functions
  from $[0,T]$ to $\R$. We extend $\ub$ by $(t+1)\ub_{0}$ on $[-1,0]$
  and by $0$ on $[T+1,\infty)$. We denote this extension by
  $\bar{u}^{\be}$.  Let $\varphi \in C^{\infty}(\R)$ be such that
  $\text{supp}(\varphi)\subset(-1,T+1)$ and $\varphi\equiv 1$ on
  $[0,T]$, we denote with a slight abuse of notation
  \begin{equation*}
    \ub=\varphi \bar{u}^{\be}.
  \end{equation*}
  Next, we define the following function
  \begin{equation*} 
    f^{\be} = \left\{
      \begin{array}{cc}
        (1+t)\varphi^{'}(t)(I+\be^2A) \ub_{0}+\varphi(t)
        (I+\be^2A)\ub_{0}
        \\-\varphi(t)(1+t)A\ub_0 & \textrm{  }t\in (-1,0), 
        \\
        \\
        -\varphi(t)P((\ub\cdot\nabla)\,\ub)+\varphi^{'}(t)(I+\be^2
        A)\ub         & \textrm{  }t\not\in (-1,0). 
      \end{array} 
    \right.
\end{equation*}
It follows that $\ub$ e $f^{\be}$ are well defined on
$(-\infty,+\infty)$.  Then,~\eqref{eq:app3} becomes
\begin{equation}
  \label{eq:app3bis}
  \ub_t+\be^2A\ub_t+A\ub=\fb\qquad \text{in }t\in\R.
\end{equation}
By using~\eqref{eq:NL} with $s=\frac{3}{2}$ we get that for any $r>0$
there exists $c>0$, independent of $\be$, such that
\begin{equation}
  \label{eq:fb}
  \|f^{\be}\|_{H^{-r}(0,T;{H}^{-\frac{3}{2}})}\leq c.
\end{equation}
Next, we take the Fourier transform of~\eqref{eq:app3bis} with respect
to $t$ we get the following (abstract) equation in $L^2(\Omega)$
\begin{equation} 
  \label{eq:app4}
  2\pi i\,\xi(\fub+\be^2A\fub)+A\fub=P\ffb,
\end{equation}
and it is at this point that we require the initial datum in
$H^2(\Omega)$ in order that $A\fub$ is well defined in $L^2(\Omega)$.  Let $\chi$ as
in the statement and let us take the $L^2(\Omega)$-scalar product of
the equations~\eqref{eq:app4} and $A^{-\chi}\fub$.  We then obtain
\begin{equation*}
  2\pi
  i\,\xi\,[(\fub,A^{-\chi}\fub)+\be^2(A\fub,A^{-\chi}\fub)]+(A\fub,A^{-\chi}\fub)
  =(P\ffb,A^{-\chi}\fub).    
\end{equation*}
Then, since $A$ is self-adjoint and positive, $(A\fub,A^{-\chi}\fub)$
is real and non-negative. By taking the imaginary part of both sides we get
\begin{equation*}
  |\xi|\,|(\fub,A^{-\chi}\fub)|\leq c|(P\ffb,A^{-\chi}\fub)|.
\end{equation*}
Since $\chi<\frac{1}{2}$ we can use the norm
equivalence~\eqref{eq:norm} and
\begin{equation*}
  \|\fub\|_{H^{-\chi}}^2\leq c(\fub,A^{-\chi}\fub). 
\end{equation*}
Concerning the right-hand side of~\eqref{eq:app4} by using
again~\eqref{eq:norm} we have
\begin{equation*}
  \begin{aligned}
    \|A^{-\chi}\fub\|_{H^\frac{3}{2}}^2&\leq (A^{-\chi}\fub,A^\frac{3}{2}A^{-\chi}\fub)
    \\
    &\leq (\fub,A^{\frac{3}{2}-2\chi}\fub)
\leq c\|\fub\|^2_{H^{\frac{3}{2}-2\chi}}.
  \end{aligned}                                         
\end{equation*}                                        
Then, we get 
\begin{equation}
  \label{eq:vef1}
  |\xi|\|\fub\|_{H^{-\chi}}^2\leq
  C\|\ffb\|_{H^{-\frac{3}{2}}}\|\fub\|_{H^{\frac{3}{2}-2\chi}}.
\end{equation}
Note that for all $\chi\in\left[\frac{1}{4},\frac{1}{2}\right)$ we have that 
\begin{equation*}
  -\chi<\frac{3}{2}-2\chi<1,
\end{equation*}
hence we can interpolate as follows $H^{\frac{3}{2}-2\chi}(\Omega)$
between $H^{-\chi}(\Omega)$ and $H^1(\Omega)$
\begin{equation}
  \label{eq:vef2}
  \|\fub\|_{{H}^{\frac{3}{2}-2\chi}}\leq
  \|\fub\|_{{H}^{-\chi}}^{\gamma}\|\fub\|_{{H}^{1}}^{1-\gamma}, 
\end{equation}
with $\gamma=\frac{4\chi-1}{2+2\chi}$.  Inserting~\eqref{eq:vef2}
in~\eqref{eq:vef1} we have
\begin{equation}
  \label{eq:4.12}
  |\xi|\|\fub\|_{H^{-\chi}}^{2-\gamma}\leq
  c\|\widehat{f}^\be\|_{H^{-\frac{3}{2}}}\|\fub\|_{H^{1}}^{1-\gamma}, 
\end{equation}
Since $L^{2}(\Omega)\subset H^{-\chi}(\Omega)$ for any $\chi>0$, for
$\gamma\in(0,1)$ we have the following inequality
\begin{equation}
  \label{eq:4.12bis}
  \|\fub\|^{2-\gamma}_{H^{-\chi}}\leq\|\fub\|^{2-\gamma}_{H^{1}}.
\end{equation}
By summing up~\eqref{eq:4.12} and~\eqref{eq:4.12bis} we get
\begin{equation}
  (1+|\xi|)\|\fub(\xi)\|^{2-\gamma}_{H^{-\chi}}
  \leq\|\widehat{f}^\be(\xi)\|_{H^{-\frac{3}{2}}}\|\fub(\xi)\|_{H^{1}}^{1-\gamma}+  
  \|\fub(\xi)\|^{2-\gamma}_{H^{1}}. 
\end{equation}
Let $r>0$ and set $\nu=\frac{2r}{2-\gamma}$. By dividing both sides by
$(1+|\xi|)^\nu$ we have
\begin{equation}
  \label{4.12}
  \begin{aligned}
    & (1+|\xi|)^{\frac{2}{2-\gamma}-\nu}\|\fub(\xi)\|_{H^{-\chi}}^{2}
    \\
    &\qquad\leq
    c(1+|\xi|)^{-\nu}\|\ffb(\xi)\|_{H^{-s}}^{\frac{2}{2-\gamma}}
    \|\fub(\xi)\|_{H^{1}}^{\frac{2(1-\gamma)}{2-\gamma}}+
    \|\fub(\xi)\|_{H^{1}}^2.
  \end{aligned}
\end{equation}
By integrating~\eqref{4.12} with respect to $\xi\in\R$ and by using
H\"{o}lder inequality we get
\begin{equation*}
  \begin{aligned}
    &
    \int_{\R}(1+|\xi|)^{\frac{2(1-r)}{2-\gamma}}\|\fub(\xi)\|_{H^{-\chi}}^{2}\,d\xi
    \\
    &\qquad \leq c
    \|\ffb\|_{H^{-r}(0,T;H^{-\frac{3}{2}})}^{\frac{1}{2-\gamma}}\|\fub\|_{L^{2}(0,T;H^{1})}^{\frac{(1-\gamma)}{2-\gamma}}
    +\|\fub\|_{L^{2}(0,T;H^{1})}^2.
  \end{aligned}
\end{equation*}
Note that since $r>0$ we have 
\begin{equation*}
  \tau=\frac{1-r}{2-\gamma}<\frac{1}{2-\gamma}=\frac{2}{5}(1+\chi)=\bar{\tau}.
\end{equation*}
By using that $\ub\in L^2(0,T;H^1(\Omega))$, which becomes by
Plancherel theorem $\|\fub(\xi)\|_{H^{1}}\in L^2(\R)$,
and~\eqref{eq:fb} finally we get~\eqref{eq:4.7}.
\end{proof}
Once we have an estimate on $\ub$ in fractional spaces, we can derive
a corresponding estimate for $\Delta \ub$, by the properties of the
Stokes operator.
\begin{lemma}
  For all $s\in\left[\frac{1}{2},\frac{3}{2}\right)$ and $r>\bar{r}$,
  there exists $c$ independent of $\be$ such that
  \begin{equation}
    \label{eq:4.6}
    \|\Delta\ub\|_{H^{-r}(0,T;{H}^{-s})}\leq c.
  \end{equation}
\end{lemma}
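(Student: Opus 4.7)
\medskip
\noindent\textbf{Plan.} The strategy is to pass to the time Fourier variable and use the approximate equation to express $A\ub=-P\Delta\ub$ in terms of quantities that are already controlled by Lemmas~\ref{lem:non} and~\ref{lem:s}. Since the natural interpretation of $\Delta\ub$ here is the one paired against divergence-free test functions (the pressure gradient dropping out), the bound reduces to one on $A\ub$ in $H^{-r}(0,T;H^{-s})$; equivalently, after taking the Leray projection of~\eqref{eq:Voigt} we work directly with the projected equation~\eqref{eq:app3bis}.

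Taking the time Fourier transform of~\eqref{eq:app3bis} yields the algebraic identity
\[
(1 + 2\pi i\, \xi\, \be^2)\, A\fub(\xi) \;=\; \ffb(\xi) \;-\; 2\pi i\, \xi\, \fub(\xi), \qquad \xi\in\R,
\]
interpreted in the appropriate dual Sobolev space. Exploiting the trivial lower bound $|1 + 2\pi i\,\xi\,\be^2| \ge 1$, which is uniform in $\be$ and $\xi$, and passing to the $H^{-s}$ norm on both sides gives
\[
\|A\fub(\xi)\|_{H^{-s}} \;\le\; \|\ffb(\xi)\|_{H^{-s}} \;+\; 2\pi|\xi|\,\|\fub(\xi)\|_{H^{-s}}.
\]

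Squaring, multiplying by the weight $(1+|\xi|)^{-2r}$, and integrating over $\xi\in\R$ then splits the bound into two pieces. The first is exactly $\|\ffb\|^2_{H^{-r}(0,T;H^{-s})}$, which is controlled by Lemma~\ref{lem:non} for any $s\in[\frac{1}{2},\frac{3}{2}]$. The second is $\int_{\R}(1+|\xi|)^{2-2r}\|\fub(\xi)\|^2_{H^{-s}}\,d\xi$, which I control via the embedding $\|\fub\|_{H^{-s}} \le c\,\|\fub\|_{H^{-\chi}}$ (valid for any $\chi \in [\frac{1}{4},\frac{1}{2})$ with $\chi\le s$, which is admissible since $s\ge \frac{1}{2}$) combined with Lemma~\ref{lem:s}, namely $\|\fub\|_{H^{\tau}(0,T;H^{-\chi})} \le c$ for every $\tau < \bar\tau(\chi) = \frac{2}{5}(1+\chi)$. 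The requirement $2-2r \le 2\tau$ then translates into $r > 1 - \bar\tau(\chi)$, and a suitable choice of $\chi$ tailored to $s$ recovers the admissible range $r>\bar r$.

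The main technical obstacle is the fine matching of the parameters $\chi,\tau$ from Lemma~\ref{lem:s} with the claim $r>\bar r=(3-2s)/4$ throughout the whole interval $s\in[\frac{1}{2},\frac{3}{2})$: for $s$ close to $\frac{1}{2}$ the coarse choice $\chi\to\frac{1}{2}^{-}$ already gives the sharp threshold, whereas for $s$ close to $\frac{3}{2}$ (hence $\bar r$ close to $0$) an additional interpolation between the pointwise-in-$\xi$ energy bound on $\fub$ from Lemma~\ref{lem:est1} and the fractional estimate of Lemma~\ref{lem:s}, in the spirit of~\eqref{eq:vef2}, will be needed to close the gap.
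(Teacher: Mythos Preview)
Your route is genuinely different from the paper's, and it has a gap that your proposed fix does not close.

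\medskip
\noindent\textbf{What the paper does.} The paper does \emph{not} rearrange~\eqref{eq:app4} and estimate the two pieces separately. Instead it tests~\eqref{eq:app4} against $A^{1-s}\fub$ and takes the \emph{real} part. Since $A$ is self-adjoint and positive, both $(\fub,A^{1-s}\fub)$ and $(A\fub,A^{1-s}\fub)$ are real nonnegative numbers; hence the entire $2\pi i\,\xi$-block is purely imaginary and disappears. What remains is
\[
(A\fub,A^{-s}A\fub)\;\le\;\bigl|(P\ffb,A^{1-s}\fub)\bigr|
\;\le\; c\,\|\ffb(\xi)\|_{H^{-s}}\,\|A\fub(\xi)\|_{H^{-s}},
\]
giving the \emph{pointwise-in-$\xi$} bound $\|A\fub(\xi)\|_{H^{-s}}\le c\,\|\ffb(\xi)\|_{H^{-s}}$ uniformly in $\be$. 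One then multiplies by $(1+|\xi|)^{-r}$, squares and integrates, and invokes only Lemma~\ref{lem:non}. Lemma~\ref{lem:s} is never used here.

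\medskip
\noindent\textbf{Where your argument falls short.} By keeping the term $2\pi|\xi|\,\|\fub(\xi)\|_{H^{-s}}$ you are forced to control $\|\ub\|_{H^{1-r}(0,T;H^{-s})}$. Lemma~\ref{lem:s}, even with the optimal choice $\chi\to\frac12^{-}$, caps the available time regularity at $\tau<\frac35$; consequently your second term is bounded only for $r>\frac25$. Since $\bar r=(3-2s)/4$, this matches the claim only when $s\le\frac{7}{10}$. For $s\in(\frac{7}{10},\frac{3}{2})$ you obtain merely $r>\frac25>\bar r$, which is strictly weaker than the stated range.

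Your suggested repair --- interpolating the fractional bound of Lemma~\ref{lem:s} with ``the pointwise-in-$\xi$ energy bound from Lemma~\ref{lem:est1}'' --- does not close this: Lemma~\ref{lem:est1} furnishes only $H^{0}$ time-regularity (in $L^2$ or $H^1$), and interpolating $H^{0}(\cdot)$ with $H^{\tau}(H^{-\chi})$, $\tau<\frac35$, cannot manufacture $H^{1-r}(\cdot)$ regularity with $1-r$ close to $1$. The time-derivative term $|\xi|\,\fub$ simply carries one too many powers of $|\xi|$ for the tools you invoke.

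\medskip
\noindent\textbf{Summary.} The paper's device --- testing with $A^{1-s}\fub$ and discarding the imaginary part --- eliminates the problematic $\xi\,\fub$ term entirely and yields the full range $r>\bar r$ in one line. Your rearrangement sacrifices this cancellation and leaves a gap for $s>\frac{7}{10}$. (It is worth noting, however, that the only downstream application, in Lemma~\ref{lem:p}, uses precisely $s=\frac{7}{10}$, for which your argument does go through.)
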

\begin{proof}
  First we have that 
  \begin{equation}
    \label{eq:norm2}
    \|\Delta \ub\|_{H^{-s}}^2\leq c\|A\ub\|_{H^{-s}}^2.
  \end{equation}
  Note that since $\frac{1}{2}<2-s<\frac{3}{2}$ we could
  use~\eqref{eq:norm}.  We multiply~\eqref{eq:app4} by $A^{1-s}\fub$ and
  we get, taking now only the real part,
  \begin{equation*}
    \|A\fub(\xi)\|_{H^{-s}}^{2}\leq c \|\ffb(\xi)\|_{H^{-s}}\|A\fub(\xi)\|_{H^{-s}} .
  \end{equation*}
  By simplifying the square of $\|A\fub\|_{H^{-s}}$,
  using~\eqref{eq:norm2} and integrating in time we get~\eqref{eq:4.6}.
\end{proof}
Finally we come back to the equations without the Leray projection
over divergence-free vector fields. We prove by comparison an estimate
for the pressure, which will be used to prove Theorem~\ref{thm:main}.
\begin{lemma}
\label{lem:p}
For any $r>\frac{2}{5}$ there exists $c>0$, independent of $\be$, such that 
\begin{equation}
  \label{eq:3.23}
  \|\pb\|_{H^{-r}(0,T;H^{\frac{3}{10}})}\leq c.
\end{equation}
\end{lemma}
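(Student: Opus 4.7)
My plan is to follow the Fourier-in-time approach of Guermond~\cite{Gue2007}, already implemented in Lemma~\ref{lem:non} and in the preceding lemma on $\Delta\ub$. First, I will apply the complementary Leray projector $I-P$ to the momentum equation in~\eqref{eq:Voigt}; since $\ub_t\in D(A)$ implies $(I-P)\ub_t=0$, this yields the decomposition
\begin{equation*}
\nabla\pb = (I-P)\Delta\ub - (I-P)\bigl((\ub\cdot\nabla)\,\ub\bigr) + \be^{2}(I-P)\Delta\ub_t.
\end{equation*}
After normalising $\pb$ to have zero spatial mean, the Ne{\v c}as inequality gives $\|\pb\|_{H^{3/10}(\Omega)}\leq C\|\nabla\pb\|_{H^{-7/10}(\Omega)}$, so it will suffice to bound $\|\nabla\pb\|_{H^{-r}(0,T;H^{-7/10})}$ uniformly in $\be$.

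Next, I will extend all quantities in time as in the proof of Lemma~\ref{lem:s} and take the Fourier transform, obtaining the algebraic identity
\begin{equation*}
\nabla\widehat{\pb}(\xi) = \bigl(1+2\pi i\xi\be^{2}\bigr)(I-P)\Delta\fub(\xi) - (I-P)\widehat{(\ub\cdot\nabla)\ub}(\xi).
\end{equation*}
Squaring, using the continuity of $I-P$ on $H^{-7/10}$, weighting by $(1+|\xi|)^{-2r}$ and integrating in $\xi$, Plancherel will produce
\begin{equation*}
\|\nabla\pb\|^{2}_{H^{-r}(H^{-7/10})} \leq C \|\Delta\ub\|^{2}_{H^{-r}(H^{-7/10})} + C\be^{4}\|\Delta\ub\|^{2}_{H^{1-r}(H^{-7/10})} + C\|(\ub\cdot\nabla)\ub\|^{2}_{H^{-r}(H^{-7/10})}.
\end{equation*}
For $s=7/10\in[1/2,3/2)$, the threshold $\bar r=(3-2s)/4=2/5$ matches the $r>2/5$ of the statement, and I will invoke the preceding lemma on $\Delta\ub$ together with Lemma~\ref{lem:non} to control the first and third terms uniformly in $\be$.

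The main obstacle will be the middle term $\be^{4}\|\Delta\ub\|^{2}_{H^{1-r}(H^{-7/10})}$. I will split $(1+|\xi|)^{2(1-r)}\lesssim(1+|\xi|)^{-2r}+|\xi|^{2}(1+|\xi|)^{-2r}$ and exploit $|\xi|\widehat{\Delta\ub}=(2\pi i)^{-1}\widehat{\Delta\ub_t}$ to bound it by
\begin{equation*}
\be^{4}\|\Delta\ub\|^{2}_{H^{-r}(H^{-7/10})} + C\be^{4}\|\Delta\ub_t\|^{2}_{H^{-r}(H^{-7/10})}.
\end{equation*}
The first piece is $O(\be^{4})$ and hence trivially bounded. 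The second, critical piece will be treated by combining the weighted bounds of Lemma~\ref{lem:ut} ($\|\be^{3/2}\ub_t\|_{L^{2}(L^{2})}\leq c$ and $\|\be^{5/2}\nabla\ub_t\|_{L^{2}(L^{2})}\leq c$) with the decomposition $(I-P)\Delta\ub_t = \Delta\ub_t + A\ub_t$ and the fractional Stokes norm equivalences~\eqref{eq:norm}: the $\be^{4}$ prefactor should absorb the $\be$-growth of the $\ub_t$-norms and yield a uniform bound. Assembling the three contributions will then give $\|\nabla\pb\|_{H^{-r}(H^{-7/10})}\leq c$ uniformly in $\be$, and hence~\eqref{eq:3.23} via Ne{\v c}as.
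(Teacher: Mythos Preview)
Your overall strategy---project out the gradient part, pass to $\nabla\pb$ in $H^{-r}(0,T;H^{-7/10})$, and use Ne\v{c}as---is the same as the paper's, but the way you propose to control the Voigt term $\be^{2}\Delta\ub_t$ has a genuine gap. After your splitting you are left with $\be^{4}\|\Delta\ub_t\|^{2}_{H^{-r}(0,T;H^{-7/10})}$, and you plan to absorb it using only Lemma~\ref{lem:ut}. This cannot work: $\|\Delta v\|_{H^{-7/10}}$ is controlled by $\|v\|_{H^{13/10}}$, not by $\|v\|_{H^{1}}$ (since $H^{-1}\not\subset H^{-7/10}$), and Lemma~\ref{lem:ut} only yields $\be^{3}\|\ub_t\|_{L^{2}(L^{2})}^{2}+\be^{5}\|\nabla\ub_t\|_{L^{2}(L^{2})}^{2}\leq c$, i.e.\ at best $H^{1}$ regularity of $\ub_t$. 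Even if one pretended that $\|\Delta\ub_t\|_{H^{-7/10}}\leq C\|\nabla\ub_t\|$, the available weight is $\be^{5}$, not $\be^{4}$, so the bound would still blow up as $\be\to0$. The decomposition $(I-P)\Delta\ub_t=\Delta\ub_t+A\ub_t$ does not help, since both summands require the same unavailable $H^{13/10}$ control. In short, the $\be^{4}$ prefactor does \emph{not} absorb the growth.

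The paper bypasses this difficulty by a simple algebraic trick you are missing: it does \emph{not} estimate $\be^{2}\Delta\ub_t$ via Lemma~\ref{lem:ut} at all. Instead, from the projected equation~\eqref{eq:app3bis} one reads off
\[
\be^{2}A\ub_t \;=\; f^{\be}-\ub_t-A\ub,
\]
so that $\|\be^{2}\Delta\ub_t\|_{H^{-r}(H^{-7/10})}\leq C\|\be^{2}A\ub_t\|_{H^{-r}(H^{-7/10})}$ is bounded by the sum of $\|f^{\be}\|$, $\|A\ub\|$ (both already controlled by~\eqref{eq:fb} and~\eqref{eq:4.6}), and $\|\ub_t\|_{H^{-r}(H^{-7/10})}$. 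The last term is handled by the fractional velocity estimate of Lemma~\ref{lem:s}: for $\chi\in[1/4,1/2)$ one has $\|\ub_t\|_{H^{-r}(H^{-7/10})}\leq C\|\ub\|_{H^{1-r}(H^{-\chi})}$, and since $r>2/5$ one can pick $\chi<1/2$ with $1-r<\frac{2}{5}(1+\chi)=\bar\tau$, making~\eqref{eq:4.7} applicable. So the missing ingredient in your argument is precisely Lemma~\ref{lem:s} combined with the use of the equation itself to rewrite $\be^{2}A\ub_t$; Lemma~\ref{lem:ut} plays no role here.
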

\begin{proof}
 We come back to the~\eqref{eq:Voigt} and let us start to estimate
  the term with the Laplacian of $\ub_t$.
\begin{equation}
  \label{eq:3.24}
  \begin{aligned}
  \|\be^2\Delta\ub_t\|_{H^{-r}(0,T;H^{-\frac{7}{10}})}&\leq
    \|\be^2A\ub_t\|_{H^{-r}(0,T;H^{-\frac{7}{10}})}
    \\
    &\leq \|\ub_t\|_{H^{-r}(0,T;H^{-\frac{7}{10}})}+\|A\ub\|_{H^{-r}(0,T;H^{-\frac{7}{10}})}
    \\
    &+\|\fb\|_{H^{-r}(0,T;H^{-\frac{7}{10}})}
    \\
    &\leq c+\|\ub\|_{H^{1-r}(0,T;H^{-\chi})},
  \end{aligned}
\end{equation}
where $\chi<\frac{1}{2}$ and we have used~\eqref{eq:norm},~\eqref{eq:NL}
and~\eqref{eq:4.6}.  Then, we have
\begin{equation}
  \label{eq:3.25}
  \begin{aligned}
    \|\pb\|_{H^{-r}(0,T;H^{\frac{3}{10}})}&\leq\|\nabla
    p\|_{H^{-r}(0,T;H^{-\frac{7}{10}})}
    \\
    &\leq
    \|\ub_t\|_{H^{-r}(0,T;H^{-\frac{7}{10}})}+\|\Delta\ub\|_{H^{-r}(0,T;H^{-\frac{7}{10}})}
    \\
    &+\|\fb\|_{H^{-r}(0,T;H^{-\frac{7}{10}})}+\|\be^2\Delta\ub_t\|_{H^{-r}(0,T;H^{-\frac{7}{10}})}
    \\
    &\leq c+2\|\ub\|_{H^{1-r}(0,T;H^{-\chi})}.
  \end{aligned}
\end{equation}
In order to estimate $\ub$ in~\eqref{eq:3.25} we are going to use
Lemma~\ref{lem:s}. We have to find
$\chi\in\left[\frac{1}{4},\frac{1}{2}\right)$ such that
\begin{equation*}
1-r<\bar{\tau}=\frac{2}{5}(1+\chi),
\end{equation*}
namely, $\frac{3}{2}-\frac{5}{2}r<\chi.$ This is actually always
possible because
\begin{equation*}
\frac{3}{2}-\frac{5}{2}r<\frac{1}{2},
\end{equation*}
then we can always find $\chi\in\left[\frac{1}{4},\frac{1}{2}\right)$ such that 
\begin{equation*}
\frac{3}{5}-\frac{5}{2}r<\chi<\frac{1}{2}.
\end{equation*}
With this choice of $\chi$ by Lemma~\ref{lem:s} we
get~\eqref{eq:3.23}.
\end{proof}
\subsection{Navier boundary conditions}
\label{sec:teo3}
In this section we prove the estimates needed to prove
Theorem~\ref{thm:main2} that concerns the problem with the Navier
boundary conditions~\eqref{eq:bcn}.  We report here the following
estimates, which are counterpart of those obtained in the Dirichlet
case.
\begin{lemma}
  \label{lem:est1-bis}
  Let $\ub$ be a solution of the NSV with Navier condition, then in
  addition to the estimate~\eqref{eq:Energy-NSV-N} we have also that
  \begin{equation}
    \label{eq:en1-bis}
    \|\ub(t)\|^2+\be^2\|\nabla\ub(t)\|^2+\int_0^t\|\nabla\ub(s)\|^2\,ds\leq
    c\big(\|\ub_0\|^2+\be^2\|\nabla\ub_0\|^2\big),  
  \end{equation}
  where the constant $c$ depends only on $\Omega$.
\end{lemma}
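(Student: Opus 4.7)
The plan is to derive~\eqref{eq:en1-bis} from the available energy equality~\eqref{eq:Energy-NSV-N} (built into the definition of weak solution) together with a geometric div--curl inequality that converts control of $\omegab=\curl\ub$ into control of $\nabla\ub$. The main obstacle is not a genuine difficulty but requires care: the energy equality provides $\omegab$-bounds, while~\eqref{eq:en1-bis} is stated in terms of $\nabla\ub$, and the conversion unavoidably produces a boundary contribution coming from the second fundamental form of $\Gamma$ that must be absorbed with an $\Omega$-dependent constant.

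The auxiliary inequality I would first establish is
\[
  \|\nabla v\|^{2}\le c_{\Omega}\bigl(\|\curl v\|^{2}+\|v\|^{2}\bigr),
\]
valid for every $v\in H^{1}(\Omega)$ with $\dive v=0$ in $\Omega$, $v\cdot n=0$ on $\Gamma$ and $(\curl v)\times n=0$ on $\Gamma$. By Theorem~\ref{thm:existence_weak_solutions-V} the solution $\ub(t)$ belongs to this class for a.e.\ $t$ under the standing hypotheses on the initial datum. To prove the inequality I would apply Lemma~\ref{lem:gammalap} with $u=\phi=v$ in both formulas and equate the two resulting expressions for $-\int_{\Omega}\Delta v\cdot v\,dx$, which yields the identity
\[
  \|\nabla v\|^{2}=\|\curl v\|^{2}+2\int_{\Gamma}(\curl v\times n)\cdot v\,dS-\int_{\Gamma}v\cdot(\nabla n)^{T}v\,dS.
\]
The Navier condition on the vorticity eliminates the middle boundary term, while the remaining one is bounded by $\|\nabla n\|_{L^{\infty}(\Gamma)}\|v\|^{2}_{L^{2}(\Gamma)}$; a standard trace/interpolation inequality $\|v\|^{2}_{L^{2}(\Gamma)}\le\e\|\nabla v\|^{2}+c_{\e}\|v\|^{2}$ closes the estimate after absorbing $\e\|\nabla v\|^{2}$ on the left.

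Finally I would apply this div--curl bound to $\ub(t)$ pointwise in time and to $\ub(s)$ under the integral sign. The energy equality~\eqref{eq:Energy-NSV-N} immediately yields
\[
  \|\ub(t)\|^{2}+\be^{2}\|\omegab(t)\|^{2}+2\int_{0}^{t}\|\omegab(s)\|^{2}\,ds=\|\ub_{0}\|^{2}+\be^{2}\|\omegab_{0}\|^{2},
\]
so bounding the two $\nabla\ub$-terms appearing on the left of~\eqref{eq:en1-bis} by the auxiliary inequality, and using the trivial bound $\|\omegab_{0}\|\le\sqrt{2}\,\|\nabla\ub_{0}\|$ on the initial data on the right, produces~\eqref{eq:en1-bis}. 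The resulting constant depends only on the geometric quantities entering the trace inequality and on $\|\nabla n\|_{L^{\infty}(\Gamma)}$, hence only on $\Omega$; in particular it is independent of $\be$, as the whole $\be$-dependence is carried by the combination $\|\ub_{0}\|^{2}+\be^{2}\|\nabla\ub_{0}\|^{2}$.
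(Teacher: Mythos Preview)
Your proposal is correct and follows essentially the same route as the paper. The paper also equates the two formulas of Lemma~\ref{lem:gammalap} with $u=\phi=\ub$ (implicitly using $\omegab\times n=0$ from the outset) to obtain $\|\omegab\|^{2}=\|\nabla\ub\|^{2}+\int_{\Gamma}\ub\cdot(\nabla n)^{T}\ub\,dS$, then bounds the boundary term via the trace inequality $\|\ub\|_{L^{2}(\Gamma)}^{2}\le\tfrac12\|\nabla\ub\|^{2}+c\|\ub\|^{2}$ and substitutes into the energy equality~\eqref{eq:Energy-NSV-N}; your version just keeps the $(\omegab\times n)$ term in the identity before killing it with the Navier condition, which is a harmless presentational difference.
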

\begin{proof}
  The proof follows easily by observing that Lemma~\ref{lem:gammalap}
  implies that
  \begin{equation*}
    \|\omegab\|^2=\|\nabla \ub\|^2+\int_\Gamma \ub\cdot\nabla n\cdot
    \ub\,dS, 
  \end{equation*}
  hence by trace theorems
\begin{equation*}
  \|\nabla \ub\|^2\leq    \|\omegab\|^2+c\int_\Gamma
  |\ub|^2\,dS\leq \|\omegab\|^2+\frac{1}{2}\|\nabla\ub\|^2+ c
  \|\ub\|^2, 
\end{equation*}
Then substituting and by using the estimate for $\ub \in
L^\infty(0,T;L^2(\Omega))$ coming from the definition of weak solution
we have the thesis.
\end{proof}
We then prove a simple weighted (in $\be$) estimate for $\ub_t$ we
will use to pass to the limit in the local energy inequality.
\begin{lemma}
  \label{lem:ut-bis}
  Let $\ub$ be a solution of~\eqref{eq:Voigt}. Then, there exists
  $c>0$ independent of $\be$ such that for any $t\in(0,T)$  
\begin{equation*}
  \be^3\|\nabla\ub\|^2+\be^3\int_0^t\|\ub_t(s)\|^2\,ds+
\be^5\int_0^t\|\nabla\ub_t(s)\|^2\,ds\leq
  c.
\end{equation*}
\end{lemma}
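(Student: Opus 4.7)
The plan is to mimic the argument of Lemma~\ref{lem:ut} by testing the momentum equation of~\eqref{eq:Voigt} with $\be^3\ub_t$ and integrating over $(0,t)\times\Omega$. The only difference with respect to the Dirichlet case is that integration by parts of the Laplacian terms now produces boundary contributions that have to be handled via the formulas in Lemma~\ref{lem:gammalap} and the Navier-type boundary conditions~\eqref{eq:bcvn}. The pressure term drops as usual since $\dive\ub_t=0$ and $\ub_t\cdot n=0$ on $\Gamma$.

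For the term $-\be^2\Delta\ub_t$ I would apply the curl--curl identity of Lemma~\ref{lem:gammalap} to get
\begin{equation*}
-\be^5\int_0^t\!\!\int_\Omega\Delta\ub_s\cdot\ub_s\,dx\,ds=\be^5\int_0^t\|\omegab_s\|^2\,ds+\be^5\int_0^t\!\!\int_{\Gamma}(\omegab_s\times n)\cdot\ub_s\,dS\,ds,
\end{equation*}
and the boundary term vanishes since differentiating in time the Navier condition $\omegab\times n=0$ gives $\omegab_s\times n=0$ on $\Gamma$. For the term $-\Delta\ub$ tested against $\be^3\ub_s$ I use the first identity of Lemma~\ref{lem:gammalap} which produces $\frac{\be^3}{2}\frac{d}{dt}\|\nabla\ub\|^2$, a boundary term containing $\omegab\times n$ that again vanishes, and a geometric boundary term $\be^3\int_{\Gamma}\ub\cdot(\nabla n)^T\cdot\ub_s\,dS$. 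The latter I would deal with by integration by parts in time, rewriting it as $\frac{\be^3}{2}\frac{d}{dt}\int_{\Gamma}\ub\cdot(\nabla n)^T\cdot\ub\,dS$; a standard trace inequality together with the uniform bound $\be^2\|\nabla\ub\|^2+\|\ub\|^2\leq c$ provided by Lemma~\ref{lem:est1-bis} controls the resulting surface integrals, up to absorbing a small fraction of $\be^3\|\nabla\ub(t)\|^2$ into the left-hand side.

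The nonlinear term is treated exactly as in Lemma~\ref{lem:ut}: H\"older's inequality yields $\be^3\int\!|\ub|\,|\nabla\ub|\,|\ub_s|\,dx\leq\be^3\|\ub\|_4\|\nabla\ub\|\|\ub_s\|_4$, and Gagliardo--Nirenberg interpolation in $\R^3$ --- applied to $\ub$, which has vanishing normal component, and to $\ub_s$ likewise --- gives an estimate by $\|\ub\|^{1/4}\|\nabla\ub\|^{7/4}\|\ub_s\|^{1/4}(\|\ub_s\|+\|\nabla\ub_s\|)^{3/4}$. Young's inequality with exponents $(8/3,8,2)$, combined with Lemma~\ref{lem:est1-bis} which provides $\be^{3/2}\|\nabla\ub\|^{3/2}\leq c$ and $\|\ub\|\leq c$, absorbs the $\be^3\|\ub_s\|^2$ and $\be^5\|\nabla\ub_s\|^2$ terms into the left-hand side and leaves a remainder of the form $c\int_0^t\|\nabla\ub\|^2\,ds$, which is already bounded by Lemma~\ref{lem:est1-bis}.

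The main obstacle is the boundary term coming from the geometry of $\Gamma$: in the Dirichlet setting all boundary contributions are zero, whereas here one must check that the surface integrals generated by $-\Delta\ub$ do not destroy the $\be$-weighted structure. I expect this is handled cleanly because $\nabla n$ is a bounded $(0,2)$-tensor on $\Gamma$ and trace inequalities allow one to control $\int_{\Gamma}|\ub|^2\,dS$ by $\|\ub\|^2+\delta\|\nabla\ub\|^2$, so multiplying by $\be^3$ keeps everything uniformly bounded. Collecting the contributions and choosing $0<\be<1$ then gives the stated inequality.
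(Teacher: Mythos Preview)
Your plan is correct and follows the same overall strategy as the paper --- test with $\be^3\ub_t$ and reuse the nonlinear estimate from Lemma~\ref{lem:ut} --- but you take a slightly more laborious route for the linear terms. The paper applies the \emph{second} identity in Lemma~\ref{lem:gammalap} (the curl--curl formula) to \emph{both} Laplacian terms: since $\omegab\times n=0$ and hence $\omegab_t\times n=0$ on $\Gamma$, the only boundary integrals that appear vanish identically, and one lands directly on
\[
\frac{\be^3}{2}\|\omegab(t)\|^2+\be^3\int_0^t\|\ub_t\|^2\,ds+\be^5\int_0^t\|\omegab_t\|^2\,ds\leq\frac{\be^3}{2}\|\omegab_0\|^2+\be^3\int_0^t\!\!\int_\Omega|\ub|\,|\nabla\ub|\,|\ub_t|\,dx\,ds,
\]
with the conversion from $\|\omegab\|^2$ to $\|\nabla\ub\|^2$ postponed to the very end via the trace argument already used in Lemma~\ref{lem:est1-bis}. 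Your mixed choice --- second identity for $-\be^2\Delta\ub_t$ but first identity for $-\Delta\ub$ --- forces you to confront the geometric boundary term $\be^3\int_\Gamma\ub\cdot(\nabla n)^T\cdot\ub_s\,dS$, which you then dispatch by integration by parts in time and a trace inequality. That works (the Weingarten form is symmetric on tangential vectors, so your rewriting as a total time derivative is legitimate), but it is an avoidable detour: using the curl--curl identity throughout eliminates this term from the outset. The rest of your argument, including the Gagliardo--Nirenberg/Young treatment of the nonlinearity and the final absorption, matches the paper.
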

\begin{proof}
  The proof is very similar to that of Lemma~\ref{lem:ut}. We start by
  multiplying the momentum equation in~\eqref{eq:Voigt} by
  $\be^3\ub_t$ and by integrating by parts we get
  \begin{equation*}
    \begin{aligned}
      \frac{
        \be^3}{2}\|\omegab(t)\|^2+\int_0^t\be^3\|\ub_t(s)\|^2&
      +{\be^5}\|\omegab_t(t)\|^2\,ds 
      \\
      &\leq\be^3\int_0^t\int_{\Omega}|\ub|\, |\nabla \ub|\,|\ub_t|\,dx ds
      +\frac{\be^3}{2}\|\omegab_0\|^2.
    \end{aligned} 
  \end{equation*}
  We estimate the right-hand side by using H\"older inequality, the standard
  convex interpolation,  and the Sobolev embedding
  $H^{1}(\Omega)\subset L^{6}(\Omega)$ we get
  \begin{equation*}
    \|\ub\|_{4}\leq c
    \|\ub\|^{\frac{1}{4}}\|\ub\|_{6}^{\frac{3}{4}}\leq
    c(\|\ub\|+\|\ub\|^{\frac{1}{4}}\|\ub\|^{\frac{3}{4}}).
  \end{equation*}
  By using also Lemma~\ref{lem:gammalap}, the basic energy type
  inequality, and the previous Lemma~\ref{lem:est1-bis} we obtain the
  thesis.
  
  Again, for our purposes, the most relevant estimate is the bound
  (independent of $\alpha$) 
  $\alpha^{3/2}u_{t} \in L^{2}(0,T;L^{2}(\Omega))$.
\end{proof}
The main difference with respect to the Dirichlet case is the
treatment of the pressure, which is now much simpler. In particular,
the use of the Navier-type conditions allow us to infer the following
lemma, see~\cite{BeiC2009a,BS2012a}.
\begin{lemma}
  \label{lem:zeta}
  Let $v$ be a smooth vector field satisfying $(\curl v)\times n=0$ on
  $\Gamma$. Then, $\zeta=\curl\curl v$ is a vector field tangential to
  the boundary, i.e., $\zeta\cdot n=0$. In particular in our case,
  since $\nabla\cdot\ub=0$, then we have $\curl\curl \ub=-\Delta \ub$
  in $\Omega$. Moreover since $\ub\cdot n=\ub_t\cdot n=0$ on $\Gamma$,
  we finally get that
  \begin{equation*}
    \Delta \ub\cdot n=\Delta \ub_t\cdot n=0\quad\text{on }\Gamma. 
  \end{equation*}
\end{lemma}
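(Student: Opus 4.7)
The lemma consists of three assertions. First, from $(\curl v)\times n=0$ on $\Gamma$ we must deduce $\zeta\cdot n=0$ on $\Gamma$, where $\zeta=\curl\curl v$. Second, we invoke the standard vector identity $\curl\curl v=\nabla(\nabla\cdot v)-\Delta v$, which under the divergence-free condition gives $\curl\curl \ub=-\Delta\ub$ in $\Omega$. Third, we combine these with the Navier-type boundary conditions~\eqref{eq:bcvn} to conclude $\Delta\ub\cdot n=\Delta\ub_t\cdot n=0$ on $\Gamma$.

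The heart of the proof is the first claim. The condition $\omega\times n=0$ on $\Gamma$ (with $\omega=\curl v$) is equivalent to saying that the tangential part of $\omega$ vanishes on $\Gamma$, i.e., $\omega$ is normal to the boundary. To exploit this I would pass to a local boundary-flattening chart $(y_1,y_2,y_3)$ in which $\Gamma$ corresponds to $\{y_3=0\}$ and the outward normal is $e_3$. In such coordinates the hypothesis reads $\omega_1=\omega_2=0$ on $\{y_3=0\}$, and then
\[
(\curl\omega)_3=\partial_{y_1}\omega_2-\partial_{y_2}\omega_1
\]
involves only tangential derivatives of $\omega_1,\omega_2$. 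Since these components vanish identically on $\{y_3=0\}$, so do their tangential derivatives along that surface, and hence $(\curl\omega)\cdot n=0$ on $\Gamma$. (Equivalently, one could appeal to the intrinsic identity $\curl F\cdot n=\mathrm{curl}_\Gamma(F_\tau)$, where $F_\tau$ is the tangential trace, and note that $\omega_\tau\equiv 0$ on $\Gamma$.)

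With the first claim in hand, applying the vector identity gives $\curl\curl\ub=-\Delta\ub$ in $\Omega$ (by $\nabla\cdot\ub=0$), and combining with $\omegab\times n=0$ on $\Gamma$ yields $\Delta\ub\cdot n=0$ on $\Gamma$. For the time derivative, since the boundary conditions $\ub\cdot n=0$ and $\omegab\times n=0$ hold for every $t$ and the outward normal $n$ is time-independent, differentiation in $t$ produces $\ub_t\cdot n=0$ and $(\curl\ub_t)\times n=0$ on $\Gamma$; the same argument applied to $\ub_t$ then gives $\Delta\ub_t\cdot n=0$. No serious obstacle is anticipated: the only mild technical point is the use of a boundary-flattening chart to justify that tangential derivatives of components vanishing on $\Gamma$ vanish there too, which is immediate once the chart is chosen.
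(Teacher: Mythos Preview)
Your argument is correct. The paper itself does not supply a proof of this lemma; it simply refers the reader to \cite[Lemma~7.4]{Ber2010b} for the details, so there is no in-paper argument to compare against. Your route---observing that $(\curl\omega)\cdot n$ depends only on the tangential trace $\omega_\tau$, which vanishes by hypothesis---is the standard one and is exactly what that reference does. One small caution: in a generic boundary-flattening diffeomorphism the Cartesian curl formula $(\curl\omega)_3=\partial_{y_1}\omega_2-\partial_{y_2}\omega_1$ does not survive verbatim (the metric picks up Jacobian factors), so the intrinsic identity $(\curl F)\cdot n=\mathrm{curl}_\Gamma(F_\tau)$ you mention parenthetically is the cleaner way to phrase the key step.
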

For a detailed proof see~\cite[Lemma~7.4]{Ber2010b}. In that reference
many other results on the Navier conditions are reviewed.

Let $\mathcal{U}\subset\R^3$ be a neighbourhood of $\Gamma$ and
$n:\R^3\to\R^3$ be a smooth function with compact support in
$\mathcal{U}$ and such that $n\big|_{\Gamma}$ is the normal vector to
$\Gamma$.
\begin{lemma}
  \label{lem:pre}
  Let $(\ub,\pb)$ be a smooth solution to the NSV
  system~\eqref{eq:Voigt} with boundary
  condition~\eqref{eq:bcvn}. Then $\pb$ satisfy the following Neumann
  problem
  \begin{equation}
    \label{eq:lp}
    \begin{cases}
      -\Delta\pb&=\partial_i\partial_j(\ub_i\ub_j)\qquad\text{in
      }\Omega,
      \\
      \frac{\partial p}{\partial n}&= \ub_i\nabla_j n_i
      \ub_i\qquad\text{on }\partial\Omega.
    \end{cases}
  \end{equation}
  Consequently, there exists $c>0$, independent of $\be$, such that the
  following estimate holds true for all $t\in(0,T)$
  \begin{equation}
    \label{eq:spn}
    \int_0^t\|\pb(s)\|^{\frac{5}{3}}_{{\frac{5}{3}}}\,ds\leq c.
  \end{equation}
\end{lemma}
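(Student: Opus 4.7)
The plan is to derive the Neumann problem \eqref{eq:lp} by combining the momentum equation with $\dive\ub=0$ and the Navier conditions, and then to obtain \eqref{eq:spn} from standard $L^{5/3}$ elliptic regularity applied to \eqref{eq:lp}, together with a Gagliardo--Nirenberg bound on $\ub\otimes\ub$.

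First I would derive the interior equation by taking the divergence of the first equation of \eqref{eq:Voigt}. Since $\dive\ub\equiv 0$, the contributions of $\ub_t$, $\be^{2}\Delta\ub_t$ and $\Delta\ub$ all disappear, and the identity
\begin{equation*}
\dive((\ub\cdot\nabla)\,\ub)=\partial_k(\ub_i\partial_i\ub_k)=\partial_i\ub_k\,\partial_k\ub_i=\partial_i\partial_j(\ub_i\ub_j),
\end{equation*}
valid for divergence-free $\ub$, yields $-\Delta\pb=\partial_i\partial_j(\ub_i\ub_j)$ in $\Omega$. For the Neumann condition I would take the normal component of the momentum equation at $\Gamma$. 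The term $\ub_t\cdot n$ vanishes because $\ub\cdot n\equiv 0$ on $\Gamma$ for every $t$; Lemma~\ref{lem:zeta} provides $\Delta\ub\cdot n=0$ and $\Delta\ub_t\cdot n=0$ on $\Gamma$; and for the convective term, extending $n$ smoothly to the neighborhood $\mathcal{U}$ of $\Gamma$ and differentiating the identity $\ub\cdot n\equiv 0$ on $\Gamma$ along the tangential direction $\ub$ itself gives $\ub_i\partial_i(\ub_j n_j)=0$ on $\Gamma$, i.e., $((\ub\cdot\nabla)\,\ub)\cdot n=-\ub_i\ub_j\,\partial_i n_j$ on $\Gamma$. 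Substituting, the normal component of \eqref{eq:Voigt} reduces to the Neumann boundary condition in \eqref{eq:lp}.

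For the bound \eqref{eq:spn} I would then invoke the $L^{5/3}$-theory for the Neumann Laplacian applied to \eqref{eq:lp}. The Calder\'on--Zygmund-type estimate for a right-hand side in double-divergence form with boundary datum of the form $F_{ij}\partial_i n_j$ (as developed in \cite{BeiC2009a,BS2012a}) provides, for each $t\in(0,T)$,
\begin{equation*}
\|\pb(t)\|_{L^{5/3}(\Omega)}\leq C\|\ub(t)\otimes\ub(t)\|_{L^{5/3}(\Omega)}=C\|\ub(t)\|_{L^{10/3}(\Omega)}^{2},
\end{equation*}
once the compatibility condition $\int_\Omega\partial_i\partial_j(\ub_i\ub_j)=\int_\Gamma \ub_i\ub_j\,\partial_i n_j$ is verified, which is immediate by integration by parts using $\ub\cdot n=0$ on $\Gamma$. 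Raising to the power $5/3$ and integrating in time, the standard Gagliardo--Nirenberg interpolation gives
\begin{equation*}
\int_0^T\|\ub(s)\|_{L^{10/3}(\Omega)}^{10/3}\,ds\leq C\,\|\ub\|_{L^\infty(0,T;L^{2}(\Omega))}^{4/3}\int_0^T\|\nabla\ub(s)\|^{2}\,ds,
\end{equation*}
and both factors on the right are bounded uniformly in $\be$ thanks to Lemma~\ref{lem:est1-bis}, yielding \eqref{eq:spn}.

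The main technical obstacle is the $L^{5/3}$ Neumann estimate with a double-divergence interior source \emph{and} nonhomogeneous boundary data. The boundary term is delicate but can be absorbed into an interior $L^{5/3}(\Omega)$ norm of $\ub\otimes\ub$ by using the smoothness of $n$: extending $n$ to a compactly supported vector field in $\mathcal{U}$ and rewriting the surface contribution as a divergence-form interior source reduces the problem to the usual Calder\'on--Zygmund estimate for $-\Delta\pb=\partial_i\partial_j(\ub_i\ub_j)$ with homogeneous Neumann data, which is the argument detailed in the cited references. Once this point is settled, everything else is routine interpolation of the uniform energy bound.
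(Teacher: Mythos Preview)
Your derivation of the Neumann problem \eqref{eq:lp} matches the paper's almost word for word. Your route to the estimate \eqref{eq:spn}, however, is genuinely different from the paper's. You exploit the double-divergence structure of the interior source and invoke a Calder\'on--Zygmund type bound for the Neumann Laplacian to obtain directly $\|\pb(t)\|_{5/3}\leq C\|\ub(t)\otimes\ub(t)\|_{5/3}=C\|\ub(t)\|_{10/3}^{2}$, and then close with the interpolation $\int_0^T\|\ub\|_{10/3}^{10/3}\,ds\leq C\|\ub\|_{L^\infty_tL^2_x}^{4/3}\|\nabla\ub\|_{L^2_tL^2_x}^{2}$. The paper instead treats the right-hand side as a function rather than a double divergence: it bounds $(\ub\cdot\nabla)\,\ub\in L^{5/3}(0,T;L^{15/14}(\Omega))$ and, via the trace theorem, the boundary datum in $L^{5/3}(0,T;W^{1-14/15,15/14}(\Gamma))$; classical Agmon--Douglis--Nirenberg estimates for the scalar Neumann problem then yield $\nabla\pb\in L^{5/3}(0,T;L^{15/14}(\Omega))$, and a Sobolev embedding (with zero mean) gives \eqref{eq:spn}. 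Your approach is shorter and avoids the detour through $L^{15/14}$ and the fractional trace space, at the cost of relying on a less classical elliptic estimate (the $L^{5/3}$ Neumann bound with a divergence-form source and the compatible inhomogeneous boundary datum), for which your absorption-of-the-boundary-term argument is plausible but would need to be spelled out carefully; the paper's choice is more off-the-shelf, citing \cite{ADN1959,Tem1975} directly.
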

\begin{proof}
  By taking the divergence of the momentum equation we get
  \begin{equation*}
    -\Delta\pb=\dive(\ub\cdot\nabla)\,\ub,
  \end{equation*}
  and by classical interpolation inequality we have
  $(\ub\cdot\nabla)\,\ub$ is uniformly bounded in
  $L^{\frac{5}{3}}(0,T;L^{\frac{15}{14}}(\Omega))$ with respect to
  $\be$.  This holds true because we are using only that $\ub\in
  L^\infty(0,T;L^2(\Omega))\cap L^2(0,T;H^1(\Omega))$ to obtain the
  estimate.  (The regularity inherited by Leray-Hopf weak solutions)

  By multiplying the momentum equation (restricted to $\Gamma$) by $n$
  and by using the fact that $\ub\cdot n=0$ on $(0,T)\times\Gamma$ we
  get
  \begin{equation*}
    \frac{\partial\pb}{\partial n}=\Big(\Delta\ub\cdot
    n+\be^2\Delta\ub_t-u_t-(\ub\cdot\nabla)\, \ub\Big)\cdot
    n=\ub_i\ub_j\partial_j n_i, 
  \end{equation*}
  where we have used Lemma~\ref{lem:zeta} and that on
  $(0,T)\times\Gamma$ the equality $u\cdot\nabla u\cdot
  n=-u\cdot\nabla n\cdot u$ holds true, see~\cite{BS2012a}.  Then,
  $(\ub,\pb)$ satisfies~\eqref{eq:lp}. By using a trace theorem and
  the fact that $\ub_i\ub_j$ is in
  $L^{\frac{5}{3}}(0,T;W^{1,\frac{15}{14}}(\Omega))$ we have that
  $\ub_i\ub_j\partial_j n_i \in
  L^{\frac{5}{3}}(0,T;W^{1-\frac{14}{15}}(\Gamma))$ uniformly in
  $\be$. Then, by classical $L^p$ estimates for the scalar Neumann
  problem, see~\cite{ADN1959,Tem1975} we get
  \begin{equation*}
    \|\nabla \pb\|_{L^{\frac{5}{3}}(0,T;L^{\frac{15}{14}})}\leq c,
  \end{equation*}
  with $c>0$ independent of $\be$. By using a Sobolev embedding
  inequality, and since $\pb$ is with zero mean value, we finally
  get~\eqref{eq:spn}.
\end{proof}
\section{Local energy inequality and the Proofs of
  Theorems~\ref{thm:main}-\ref{thm:main2}}
\label{sec:teo1}
In this section we prove the convergence of $\{(\ub,\pb)\}_\be$ to a
suitable weak solutions of the NSE when $\be\rightarrow 0$. Note that,
the passage to the limit in the weak formulation of the NSV to show
that the limit satisfy the NSE is standard. Specifically, either in
the case of boundary condition~\eqref{eq:bc} or in the
case~\eqref{eq:bcn} it is possible to prove that there exists $u\in
L^{\infty}(0,T;L^{2}(\Omega))\cap L^{2}(0,T;H^1(\Omega))$ a Leray-Hopf
weak solution such that, up to sub-sequences,
\begin{equation}
  \label{eq:conv}
  \begin{aligned}
    &\ub\rightarrow u\textrm{ strongly in }L^{2}((0,T)\times\Omega),
    \\
    &\nabla\ub\rightharpoonup\nabla u\textrm{ weakly in
    }L^{2}((0,T)\times\Omega). 
  \end{aligned}
\end{equation}
Then, we need only to prove the the local energy inequality holds
true. At this point, due to the fact that in the definition of local
energy inequality there test functions which are with compact support,
the role of boundary conditions is limited. We wish to mention that in
case of regularity results up to the boundary slightly different
notions are used, see the work of Seregin~\textit{et al.}  summarized
in~\cite{SS2014}.
\begin{proof}[Proof of Theorem~\ref{thm:main}]
  We start by multiplying equations~\eqref{eq:Voigt} by $\ub\phi$ for
  some $0\leq\phi\in C_c^\infty((0,T)\times\Omega)$, and after some
  integration by parts over $(0,t)\times\Omega$ we get
  \begin{equation}
    \label{5.2}
    \begin{aligned}
      \int_{0}^{T}(|\nabla\ub|^{2},\phi)\,dt&=\int_{0}^{T}\big(\frac{|\ub|^{2}}{2},\phi_{t}+
      \Delta\phi\big)+(\ub\frac{|\ub|^{2}}{2},\nabla\phi)\,dt
      \\
      &\qquad+\int_0^T \be^{2}(\Delta\ub_t,\ub\phi) +(\ub\pb,\nabla\phi)\,dt.
    \end{aligned}
  \end{equation}
  We estimate the terms from the right-hand side of~\eqref{5.2}: By
  weak lower semicontinuity of the $L^{2}$-norm, the fact that
  $\ub\rightharpoonup u$ weakly in $L^{2}(0,T;{H}^{1})$, and
  $\phi\geq0$ we have that
  \begin{equation*}
    \int_{0}^{T} (|\nabla u|^{2},\phi)\,dt\leq \liminf_{\be\rightarrow
      0}\int_{0}^{T}  (|\nabla \ub|^{2},\phi)\,dt. 
  \end{equation*}
  Then, since $\ub\rightarrow u$ strongly in
  $L^{2}(0,T;L^{2}(\Omega))$, we get
  \begin{equation}
    \int_{0}^{T}\left(\frac{|\ub|^{2}}{2},\phi_{t}+\Delta\phi\right)\,dt\rightarrow
    \int_{0}^{T}\left(\frac{|u|^{2}}{2},\phi_{t}+
      \Delta\phi\right)\,dt\qquad\textrm{as }\be\rightarrow 0. 
    \label{5.4}
  \end{equation}
  Next, by interpolation we also have that $\ub\rightarrow u$ strongly
  in $L^{2}(0,T;L^{3}(\Omega))$ and that $\ub$ is bounded in
  $L^{4}(0,T;L^{3}(\Omega))$, so it follows that 
  \begin{equation*}
    \int_{0}^{T}
    (\ub\frac{|\ub|^{2}}{2},\nabla\phi)\,dt\rightarrow\int_{0}^{T}(u\frac{|u|^{2}}{2},\phi)\,dt 
    \qquad\textrm{as } \be\rightarrow 0. 
  \end{equation*}
  of~\eqref{5.2}. To estimate the third term, we observe that since
  $\phi$ is with space-time compact support in $(0,T)\times \Omega$ we
  can freely integrate by parts without appearance of boundary
  terms. With a first integration by parts with respect to the time
  variable and then with respect to the space variables we get
  \begin{equation*}
    \begin{aligned}
      &\int_0^T\int_\Omega\Delta \ub_t \ub \phi\,dx dt=-
      \int_0^T\int_\Omega\Delta \ub \ub_t \phi+ \Delta \ub \ub
      \phi_t\,dx dt
      \\
      &\quad=\int_0^T\int_\Omega\nabla \ub \nabla \ub_t \phi+ \nabla
      \ub \nabla\phi \,\ub_t+|\nabla \ub|^2\phi_t+\nabla \ub \ub
      \nabla\phi_t\,dx dt
      \\
      &\quad=\int_0^T\int_\Omega\partial_t\frac{|\nabla
        \ub|^2}{2}\phi+ \nabla \ub \nabla\phi\,\ub_t+|\nabla
      \ub|^2\phi_t+\nabla\frac{|\ub|^2}{2} \nabla\phi_t\,dx dt.
    \end{aligned}
  \end{equation*}
  Hence, with further integration by parts of the first and last term
  \begin{equation*}
    \begin{aligned}
      &\int_0^T\int_\Omega\Delta \ub_t \ub \phi\,dx dt=
      \\
      &\quad=\int_0^T\int_\Omega-\frac{|\nabla \ub|^2}{2}\phi_t+ \nabla \ub \nabla\phi
      \ub_t+|\nabla \ub|^2\phi_t-\frac{|\ub|^2}{2} \Delta\phi_t\,dx dt
      \\
      &\quad=\int_0^T\int_\Omega\frac{|\nabla \ub|^2}{2}\phi_t+ \nabla \ub \nabla\phi
      \ub_t+|\nabla \ub|^2\phi_t-\frac{|\ub|^2}{2} \Delta\phi_t\,dx dt.
    \end{aligned}
  \end{equation*}
  Consequently, we can prove the following estimate
  \begin{align*}
    & \be^2\left|\int_0^T\int_\Omega\Delta\ub_t\ub\phi\,dx dt\right|
    \\
    &\qquad \leq
    \frac{\be^2}{2}\int_0^T\int_\Omega|\nabla\ub|^2|\phi_t|\,dx dt
    +\frac{\be^2}{2}\int_0^T\int_\Omega|\ub|^2|\Delta\phi_t|\,dx dt
    \\
    &\qquad\qquad
    +\be^2\left|\int_0^T\int_\Omega\ub_t\nabla\ub\nabla\phi\,dx dt\right|.
  \end{align*}
  By using the fact that $\ub$ and $\nabla\ub$ are uniformly bounded
  in $L^2((0,T)\times\Omega)$ and that $\phi$ is
  $C_c^\infty((0,T)\times\Omega)$ we have that the first two integrals
  from the right-hand side vanish when $\be\rightarrow 0$. Concerning
  the last term we argue in the following way
  \begin{align*}
    \be^2\left|\int_0^T\int_\Omega\ub_t\nabla\ub\nabla\phi\,dx dt\right|&\leq
    C\be^2\int_0^T\|\ub_t\|\|\nabla\ub\|\,dt
    \\
    &\leq
    C\be^\frac{1}{2}\int_0^T\be^\frac{3}{2}\|\ub_t\|\|\nabla\ub\|\,dt
    \\
    &\leq
    C\be^\frac{1}{2}\left(\be^3\int_0^T\|\ub_t\|^2\,dt\right)^\frac{1}{2}\left(\int_0^T\|\nabla
      u\|^2\,dt\right)^\frac{1}{2}
    \\
    &\leq C\be^\frac{1}{2},
  \end{align*}
  where we have used Lemma~\ref{lem:ut}. By letting $\be$ go to $0$
  all these integrals vanish. 
  \begin{remark}
    All arguments used up to now are true in the case of both boundary
    conditions~\eqref{eq:bc} and~\eqref{eq:bcn}.
  \end{remark}
  Now, we estimate the last two term from the right-hand side of~\eqref{5.2}.
  \begin{equation}
    \label{eq:pressure}
    \int_{0}^{T} (\ub\pb,\nabla \phi)\,dt.
  \end{equation}
  In the case of the Dirichlet boundary condition~\eqref{eq:bc} by
  using Lemma~\ref{lem:p} we have that
  \begin{equation*}
    \pb\rightharpoonup p\quad \textrm{  in  }\quad
    H^{-r}(0,T;H^{\frac{3}{10}}(\Omega)). 
  \end{equation*}
  for any $r>\frac{2}{5}$. Since $\frac{1}{4}<\frac{3}{10}$ we have
  that $H^{-\frac{1}{4}}(\Omega)\subset H^{-\frac{3}{10}}(\Omega)$
  with compact embedding. Moreover,with $\chi=\frac{1}{2}$ we have
  that $\bar{\tau}=\frac{2}{5}(1+\chi)=\frac{1}{2}$. Then, we can find
  $r$ and $\tau$ such that
  \begin{equation*}
    \frac{2}{5}<r<\tau<\frac{1}{5}.
  \end{equation*}
  With this choice of parameters we have that 
  \begin{equation*}
    H^{\tau}(0,T;H^{-\frac{1}{4}}(\Omega))\subset
    H^{r}(0,T;H^{-\frac{3}{10}}(\Omega)). 
  \end{equation*}
  Then, by using Lemma~\ref{lem:s} and classical compactness argument (a
  variant of Aubin-Lions lemma, see~\cite{Gue2007}) we get that
  \begin{equation*}
    \ub\rightarrow u\textrm{ in }H^{r}(0,T;H^{-\frac{3}{10}}(\Omega)). 
  \end{equation*} 
  Then, it follows that
  \begin{equation*}
    \int_{0}^{T}(\pb\ub,\nabla\phi)\,dt\rightarrow\int_{0}^{T}(p\,u,\nabla\phi)\,dt  
    \qquad\text{as } \be\rightarrow 0
  \end{equation*}
  and this proves that the local energy inequality~\eqref{eq:GEI}
  holds true.
\end{proof}
In the Theorem~\ref{thm:main} the local energy inequality is
satisfied, but the pressure has not the usual regularity. We show now
how to slightly change the pressure to obtain a genuine suitable
solution with pressure in $L^{\frac{5}{3}}((0,T)\times\Omega)$.
\begin{corollary}
\label{cor:corollary}
  It is possible to associate to $p$ another scalar $\tilde{p}\in
  L^{\frac{5}{3}}((0,T)\times\Omega)$ such that the couple
  $(u,\tilde{p})$ is  a suitable weak solution.
\end{corollary}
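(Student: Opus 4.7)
The plan is to construct $\tilde{p}$ directly from the Leray--Hopf solution $u$ produced by Theorem~\ref{thm:main}, and then to check that replacing $p$ by $\tilde{p}$ inside the local energy inequality~\eqref{eq:GEI} does not change the term involving the pressure. The point is that $(u,p)$ already satisfies~\eqref{eq:GEI} and that $u$ has exactly the regularity of a Leray--Hopf weak solution with Dirichlet data, so the existing $L^{5/3}$ pressure theory applies to $u$ irrespective of the specific $p$ produced by the Voigt approximation.

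First I would invoke the Sohr--von Wahl estimates~\cite{SvW1986} (quoted already in Section~\ref{sec:Pre}): given the Leray--Hopf solution $u\in L^\infty(0,T;L^2_\sigma(\Omega))\cap L^2(0,T;H^1_{0,\sigma}(\Omega))$ of~\eqref{eq:nse}--\eqref{eq:bc}, there exists a unique (up to a function of time) scalar $\tilde{p}\in L^{5/3}((0,T)\times\Omega)$ such that
\begin{equation*}
u_t-\Delta u+(u\cdot\nabla)u+\nabla\tilde{p}=0
\qquad\text{in }\mathcal{D}'((0,T)\times\Omega),
\end{equation*}
and one normalizes $\tilde p$ by requiring zero spatial mean, which in the Dirichlet case one fixes in the standard way.

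Next, I would compare $p$ and $\tilde p$. Both satisfy the Navier--Stokes momentum equation against the same $u$, so $\nabla(p-\tilde p)=0$ as a distribution on $(0,T)\times\Omega$. By a classical de Rham-type argument (applied in the sense of $\mathcal{S}'$ in $t$, valued in distributions in $x$), it follows that $p-\tilde p=c(t)$ is a distribution depending only on the time variable. This is the only place where one must be a little careful, because $p$ lives in a negative fractional Sobolev space in time; however, $\nabla(p-\tilde p)=0$ is meaningful in $\mathcal{D}'((0,T)\times\Omega)$, and de Rham applies in that space.

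Finally, I would plug $\tilde p=p-c(t)$ into~\eqref{eq:GEI}. The pressure enters only through the term $(\tilde p\, u,\nabla\phi)$ with $0\le\phi\in C^\infty_c((0,T)\times\Omega)$. For such $\phi$ we have
\begin{equation*}
\int_0^T (c(t)u,\nabla\phi)\,dt
=\int_0^T c(t)\,\langle u,\nabla\phi\rangle\,dt
=-\int_0^T c(t)\,\langle\dive u,\phi\rangle\,dt=0,
\end{equation*}
since $\dive u=0$ and $\phi$ has compact support in $\Omega$ (so no boundary term appears in the integration by parts). Hence $(p u,\nabla\phi)=(\tilde p\, u,\nabla\phi)$ in the distributional pairing against $\phi$, and~\eqref{eq:GEI} holds with $\tilde p$ in place of $p$. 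Combined with $\tilde p\in L^{5/3}((0,T)\times\Omega)$ and the regularity of $u$, this shows that $(u,\tilde p)$ meets Definition~\ref{def:sws}, i.e.\ it is a suitable weak solution. The only nontrivial step is the de Rham argument that identifies $p-\tilde p$ with a time-distribution, but thanks to the divergence-freeness of $u$ and the compact support of $\phi$ this is exactly what is needed to make the pressure term invariant.
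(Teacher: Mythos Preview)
Your argument is correct and is actually more direct than the paper's. Both proofs start identically: invoke Sohr--von Wahl to produce $\tilde p\in L^{5/3}((0,T)\times\Omega)$ associated to the same Leray--Hopf solution $u$, and observe that $\nabla(p-\tilde p)=0$ in $\mathcal{D}'((0,T)\times\Omega)$, so that $p-\tilde p=c(t)$ depends only on time. At this point the paper goes back to the Voigt approximation, replaces the approximate pressures $p^\alpha$ by $p^\alpha+\mathcal G(t)$ (with $\mathcal G=c$), and re-runs the entire convergence argument of Theorem~\ref{thm:main} to conclude that the local energy inequality holds for $(u,p+\mathcal G)$. You instead note that for every $t$ the spatial pairing $(u(t),\nabla\phi(t))$ vanishes by the divergence-freeness of $u$ and the compact support of $\phi$, so the term $(c(t)u,\nabla\phi)$ is identically zero and the pressure contribution in~\eqref{eq:GEI} is unchanged when $p$ is replaced by $\tilde p$. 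This bypasses the need to revisit the approximation entirely; in fact it also explains \emph{a posteriori} why the paper's modified approximation works, since the extra term $(u^\alpha\mathcal G,\nabla\phi)$ it carries along is zero for each $\alpha$ by the same divergence argument. The only delicate point you flag---that $c(t)$ is merely a time-distribution because $p$ lives in $H^{-r}(0,T;H^{3/10})$---is harmless here, precisely because the function of $t$ it is paired against is identically zero.
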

\begin{proof}
  We start by recalling that In Sohr and von Wahl~\cite{SvW1986} (see
  also~\cite{CKN1982}) it is proved that the pressure associated to
  Leray-Hopf weak solution belongs to the space
  $L^{\frac{5}{3}}((0,T)\times\Omega)$. This is obtained by
  considering the linear Stokes problem
\begin{align}
  \label{eq:se}
  \displaystyle{v_t-\Delta v+\nabla
    q}&=-(u\cdot\nabla)\,u&\quad \text{in }(0,T)\times\Omega,
  \\
  \dive v&=0&\quad \text{in }(0,T)\times\Omega,
  \\
  \label{eq:sbc}
  v&=0&\quad\text{on }(0,T)\times\Gamma,   
  \\
  \label{eq:sid}
  v(x,0)&=u_0(x)&\qquad\text{in }\Omega,
\end{align}
and by employing classical $L^p$-estimates, together with the
uniqueness for the linear problem.  We observe that in
Theorem~\ref{thm:main} $p\in H^{-r}(0,T;H^{\frac{3}{10}}(\Omega))$,
while considering \eqref{eq:se} with right-hand side
$-(u\cdot\nabla)\,u$ we get that 
\begin{equation*}
q\in L^{\frac{5}{3}}(0,T)\times\Omega).
\end{equation*}
Then it follows that $p$ and $q$ are almost the same, in particular we
have 
\begin{equation*}
  \int_0^T\int_\Omega(\nabla p-\nabla q)\,\phi\,dxdt=0,
\end{equation*}
implying that 
\begin{equation*}
  q(t,x)=p(t,x)+\mathcal{G}(t)
\end{equation*}
for some function $\mathcal{G}(t)$ depending only on the time
variable.  It follows that the couple $(u,q)$ is again a weak solution
to the NSE with the same initial datum as $(u,p)$.
Next, we consider the approximate NSV problems 
\begin{equation*}
\begin{aligned}
  \ub_t-\be^2\Delta\ub_t-\Delta
  \ub+(\ub\cdot\nabla)\,\ub+\nabla(\pb+\mathcal{G})&=0&\hspace{-0cm}\text{in
  }(0,T)\times\Omega,
  \\
  \nabla\cdot\ub&=0&\hspace{-0cm}\text{in }(0,T)\times\Omega,
  \\
  \ub&=0&\hspace{-0cm}\text{on }(0,T)\times\Gamma,
  \\
  \ub(0,x)&=\ub_0(x)&\hspace{-0cm}\text{in }\Omega,
\end{aligned}
\end{equation*}
for which we can prove exactly the same estimates as before. Since the
function $\mathcal{G}(t)$ has (at least) the same time-regularity as
$p$. we can prove with the same arguments as before that
  Now, we estimate the last two term from the right-hand side of~\eqref{5.2}.
  \begin{equation}
    \int_{0}^{T} (\ub(\pb+\mathcal{G}(t)),\nabla \phi)\,dt\to
    \int_{0}^{T} (u(p+\mathcal{G}(t)),\nabla \phi)\,dt, 
  \end{equation}
hence proving that $(u,p+\mathcal{G}(t))$ satisfy the local energy inequality.
\end{proof}

\medskip

In the case of Navier boundary conditions the proof of the local
energy inequality is very similar to the previous case, only the
convergence of the term with the pressure requires a different
treatment
\begin{proof}[Proof of Theorem~\ref{thm:main2}]
  In the case of boundary conditions~\eqref{eq:bcn} the passage to the
  limit $\be\to0$ is very standard. The proof is the same as in
  Theorem~\ref{thm:main} up to the treatment of the pressure. In this
  case we have that by using Lemma~\ref{lem:pre}
  \begin{equation*}
    \pb\rightharpoonup p\textrm{ weakly in
    }L^{\frac{5}{3}}((0,T)\times\Omega), 
  \end{equation*}
  while by standard interpolation argument
  \begin{equation*}
    \ub\rightarrow u\textrm{ strongly in
    }L^{\frac{5}{2}}((0,T)\times\Omega). 
  \end{equation*}
  Then, it straightforward to pass to the limit in the
  term~\eqref{eq:pressure}.
\end{proof}
\section{Fourier-Galerkin approximations and suitable weak solutions} 
\label{sec:teo2}
In this section we consider the NSE~\eqref{eq:nse} and
NSV~\eqref{eq:Voigt} equations in the space-periodic setting and we
address the problem of construction of suitable solutions by means of
the Fourier-Galerkin method. The standard (Fourier) Galerkin method to
approximate system in $\T:=\R^3/(2\pi \Z)^3$ can be implemented in the
following way. Let $P$ denote the Leray projector of $L^2_{0}(\T)$
(the subspace of $L^2(\T)$ with zero mean value) onto divergence free
vector fields denoted by $L^2_{\sigma}(\T)$, which explicitly reads in
the orthogonal Hilbert basis of complex exponentials as follows:
\begin{equation*}
  P:\ g(x)=\sum_{k\in\mathbb{Z}^3\backslash\{0\}}g_k\, \text{e}^{
    i  k\cdot x}\mapsto Pg(x)=\sum_{k\in\mathbb{Z}^3\backslash\{0\}}\left[g_k-\frac{(g_k\cdot
      k) k}{|k|^2}\right]\, \text{e}^{
    i k\cdot x}. 
\end{equation*}
It is well-known that the Leray projector is continuous also as an
operator $H^s(\T)\cap L^2_0(\T)\mapsto H^s_\sigma(\T):=H^s(\T)\cap
L^2_s(\T)$ for all positive $s$.  For any $n\in\N$, we denote by $P_n$
the projector of $L^2(\T)$ on the finite-dimensional sub-space
$V_n:=P_n (L^2_\sigma(\mathbb{T}))$ given by the following formula
\begin{equation*}
  P_n:g(x)=\sum_{k\in\mathbb{Z}^3\backslash\{0\}}g_k \,\text{e}^{
    ik\cdot
    x}\mapsto P_n g(x)=\sum_{0<|k|\leq n}\left[g_k-\frac{(g_k\cdot
      k) k}{|k|^2}\right]\, \text{e}^{
    i k\cdot x}. 
\end{equation*}
Then the Galerkin approximation for the NSE~\eqref{eq:nse} is the
following Cauchy problem for systems of ordinary differential
equations in the unknowns $c_k^n(t)$, with $|k|\leq n$
\begin{align}
  \label{eq:GNSE}
  u^n_t- \Delta u^n+P_n((u_n\cdot\nabla)\,u_n)&=0&\quad \text{in
  }(0,T)\times\T,
  \\
  u^n(0,x)&=P_nu_0(x)&\quad \text{in }\T,
\end{align}
where 
\begin{equation*}
  u^n(t,x)=\sum_{0<|k|\leq n}c^{n}_k(t)\,\text{e}^{
    ik\cdot x},\qquad\text{with}\quad k\cdot c_k^n=0.
\end{equation*}
It is important to point to that in the standard Fourier-Galerkin
method there is an explicit formula for the Leray projector and even
if the pressure $p^{n}$ disappears, it can explicitly computed.

One main unsolved question is to prove (or disprove) that
$\{(u^n,p^{n})\}_n$ converges as $n\to+\infty$ to a suitable weak
solution. This special setting, with an approximation which is
spectral and consequently \textit{non local} seems to require tools
completely different from those successfully used in~\cite{Gue2006} to
handle finite element approximations. Some conditional results linking
the hyper-dissipative NSE and the problem of Fourier-Galerkin
approximation are treated in~\cite{BCI2007}. In that reference there
is also an interesting link between the \textit{global energy
  equality} and the local energy inequality~\eqref{eq:GEI}.

By following the same spirit, we consider then the approximation by a
Fourier-Galerkin-NSV system and show that, under a link between the
coefficient $\be$ and the order of approximation $n\in\N$ one can show
the local energy inequality. We use the symbol $\vn$ to denote the
approximate Fourier-Galerkin solution to the NSV, in the unknowns
$d_{k}^{n}(t)$ with modes up to $|k|\leq n$
\begin{equation}
  \label{eq:GA}
  \begin{aligned}
    \vn_t-\be^2\Delta \vn_t- \Delta
    \vn+P_n((u_n\cdot\nabla)\,u_n)&=0&\hspace{-.3cm}\text{ in }(0,T)\times\T,
    \\
    \vn(0,x)&=P_nu_0(x) &\text{ in }\T,
  \end{aligned}
\end{equation}
where 
\begin{equation*}
  \vn(t,x)=\sum_{0<|k|\leq n}d_k^{n}(t)\,\text{e}^{
    ik\cdot x},\qquad\text{with}\quad k\cdot d^{n}_k=0.
\end{equation*}
The Galerkin approximation for the NSV~\eqref{eq:Voigt} is the
following Cauchy problem for systems of ordinary differential
equations

The estimates which may give to the local energy inequality are
obtained by testing the equations\eqref{eq:GA} by $\vn \phi$, where
$\phi$ which is a non-negative, space-periodic, and with compact
support in $(0,T)$. In general $\vn\phi\not\in V_n$, hence we cannot
use directly this approach. We have two possible choices to project
$\vn$, or to rewrite the equations~\eqref{eq:GA} in such a way to have
the pressure and a formulation which avoids the projection over
$V_n$. It is natural, since we are in the periodic case, to define
$\pn$ as solution of the Poisson problem
\begin{equation}
  \label{eq:AP} 
  -\Delta \pn=\sum_{i,j=1}^{3}\partial_i\partial_j(\vn_{i}\vn_{j}),
\end{equation} 
endowed with periodic conditions and normalized with vanishing mean
value.  After having defined the operator $Q_n:=P-P_{n}$ we get the
following equations for $\vn$
\begin{equation*}
  \begin{aligned}
    \vn_{t}-\be_n^2\Delta \vn_{t}- \Delta
    \vn+&P((\vn\cdot\nabla)\,\vn)
    \\
    &-Q_n((\vn\cdot\nabla)\,\vn)=0\quad\text{ in }(0,T)\times\T,
  \end{aligned}
\end{equation*}
which can be rewritten in the following way
\begin{equation}
  \label{eq:GA1}
  \begin{aligned}
    \vn_t-\be^2&\Delta \vn_t- \Delta \vn+(\vn\cdot\nabla)\,\vn
    \\
    &-Q_n((\vn\cdot\nabla)\,\vn)+\nabla \pn=0\quad\text{ in
    }(0,T)\times\T.
  \end{aligned}
\end{equation}
We can now freely test~\eqref{eq:GA1} with $\vn\phi$, but at the price
of being able to obtain good estimates on
$Q_n((\vn\cdot\nabla)\,\vn)$. It is at this step that in the finite
element setting that a special choice of the function spaces can be
used to prove the local energy inequality. In the Fourier-Galerkin
setting it is not known whether this methodology works or not, since
the available estimates are not strong enough to handle the remainder
term involving $Q_n((\vn\cdot\nabla)\,\vn)$.
\subsection{A priori estimates}
in this section we prove the main weighted estimates needed to prove
the local energy inequality under certain assumptions on the parameter
$\alpha$ of the Voigt regularization.  We begin with the standard a
priori estimate on the solution $(\vn, \pn)$.
\begin{lemma}
  \label{lem:2.0}
  Let $u_0\in H^{1}_\sigma(\T)$. Then, for all $\be>0$ and $n\in\N$,
  the unique solution of~\eqref{eq:GA} satisfies for all $t>0$ the
  following equality
  \begin{equation*}
    \|\vn(t)\|^2+\be^2\|\nabla \vn(t)\|^2+2\int_0^t\|\nabla
    \vn(s)\|^2\,ds=\|P_nu_0\|^2+\be^2\|\nabla P_n u_0\|^2. 
  \end{equation*}
\end{lemma}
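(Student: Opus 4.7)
The plan is to test the Galerkin equation~\eqref{eq:GA} with $\vn$ in $L^2(\T)$, exploit the cancellation of the convective term, and integrate in time. Since $\vn(t,\cdot)\in V_n$ for every $t$ and $V_n$ is spanned by a finite set of divergence-free trigonometric polynomials, the required manipulations amount to finite-dimensional ODE computations, so no additional regularity justification is needed.

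First I would take the $L^2$-inner product of~\eqref{eq:GA} with $\vn$. The time-derivative term gives
\begin{equation*}
(\vn_t,\vn)=\frac{1}{2}\frac{d}{dt}\|\vn\|^2,
\end{equation*}
while the Voigt regularization term, integrated by parts using space-periodicity (so there are no boundary terms) yields
\begin{equation*}
-\be^2(\Delta\vn_t,\vn)=\be^2(\nabla\vn_t,\nabla\vn)=\frac{\be^2}{2}\frac{d}{dt}\|\nabla\vn\|^2.
\end{equation*}
Similarly, the dissipative term produces $-(\Delta\vn,\vn)=\|\nabla\vn\|^2$.

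The convective term is the key point. Because $\vn\in V_n$, we have $P_n\vn=\vn$, and since $P_n$ is self-adjoint on $L^2(\T)$,
\begin{equation*}
(P_n((\vn\cdot\nabla)\vn),\vn)=((\vn\cdot\nabla)\vn,P_n\vn)=((\vn\cdot\nabla)\vn,\vn).
\end{equation*}
This last quantity vanishes by the standard cancellation for the trilinear form associated with divergence-free vector fields in the periodic setting: integration by parts gives
\begin{equation*}
((\vn\cdot\nabla)\vn,\vn)=\frac{1}{2}\int_\T\vn\cdot\nabla|\vn|^2\,dx=-\frac{1}{2}\int_\T(\dive\vn)|\vn|^2\,dx=0,
\end{equation*}
using $\dive\vn=0$ and again periodicity.

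Combining these identities gives
\begin{equation*}
\frac{1}{2}\frac{d}{dt}\bigl(\|\vn\|^2+\be^2\|\nabla\vn\|^2\bigr)+\|\nabla\vn\|^2=0,
\end{equation*}
and integration on $(0,t)$ together with the initial condition $\vn(0)=P_nu_0$ yields the claimed equality. There is no genuine obstacle here; the only point worth checking is that, because we have \emph{equality} rather than inequality, the argument relies on the projection identity $P_n\vn=\vn$ and the periodic setting (no boundary contributions from the integration by parts in the Voigt term).
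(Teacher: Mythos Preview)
Your proof is correct and is precisely the standard energy-equality argument the paper has in mind: in the paper this lemma is not proved in detail but is simply described as ``the standard energy type equality,'' obtained by testing~\eqref{eq:GA} with $\vn$ and using the cancellation of the nonlinear term. Your write-up fills in exactly those steps, including the use of $P_n\vn=\vn$ and the periodic boundary conditions, so nothing is missing.
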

This lemma is just the standard energy type equality, which is --by
the way-- satisfied also in the limit $n\to+\infty$ by weak solutions
of the NSV.

The following Lemma gives a weighted a priori estimate for second
order space derivatives and will be used to pass to the limit to get
the generalized energy inequality.
\begin{lemma}
  \label{lem:2.1}
  Let be given $u_{0}\in H^{2}_\sigma(\T)$ and let $\vn$ be the unique
  solution of~\eqref{eq:GA} with initial datum $P_{n}u_{0}$. Then,
  there exists $c>0$, independent of $\be>0$ and of $n\in \N$, such
  that for all $t\in(0,T)$
\begin{equation*}
  \be^{6}\int_0^t\|\Delta \vn(s)\|^2\,ds\leq c.
\end{equation*}
\end{lemma}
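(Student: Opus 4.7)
The plan is to test the Fourier-Galerkin equation \eqref{eq:GA} with $-\Delta\vn$. Since the Laplacian acts diagonally on the Fourier basis it preserves $V_n$, so $-\Delta\vn\in V_n$, and the self-adjointness of $P_n$ yields $(P_n((\vn\cdot\nabla)\vn),-\Delta\vn)=((\vn\cdot\nabla)\vn,-\Delta\vn)$. I would therefore obtain the identity
\begin{equation*}
\frac{1}{2}\frac{d}{dt}\|\nabla\vn\|^2+\frac{\be^2}{2}\frac{d}{dt}\|\Delta\vn\|^2+\|\Delta\vn\|^2=((\vn\cdot\nabla)\vn,\Delta\vn).
\end{equation*}

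Next I would bound the convective term via H\"older with exponents $6,3,2$, the periodic Sobolev embedding $\|\vn\|_{L^6}\leq C\|\nabla\vn\|$ (which applies since $\vn$ has zero mean), and the Gagliardo-Nirenberg interpolation $\|\nabla\vn\|_{L^3}\leq C\|\nabla\vn\|^{1/2}\|\Delta\vn\|^{1/2}$. Together with Young's inequality with exponents $4/3$ and $4$ this gives
\begin{equation*}
|((\vn\cdot\nabla)\vn,\Delta\vn)|\leq C\|\nabla\vn\|^{3/2}\|\Delta\vn\|^{3/2}\leq\tfrac{1}{2}\|\Delta\vn\|^2+C\|\nabla\vn\|^6,
\end{equation*}
so that after absorbing the first term on the right into the damping and integrating from $0$ to $t$ one arrives at
\begin{equation*}
\int_0^t\|\Delta\vn(s)\|^2\,ds\leq\|\nabla P_nu_0\|^2+\be^2\|\Delta P_nu_0\|^2+C\int_0^t\|\nabla\vn(s)\|^6\,ds.
\end{equation*}

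To close the estimate I would invoke Lemma \ref{lem:2.0}, which under the standing assumption $\be\in(0,1]$ gives both the uniform-in-time bound $\sup_{s\in[0,T]}\|\nabla\vn(s)\|^2\leq C\be^{-2}$ and the $L^2_t$ bound $\int_0^T\|\nabla\vn(s)\|^2\,ds\leq C$, with constants independent of $n$. Combining them yields
\begin{equation*}
\int_0^T\|\nabla\vn(s)\|^6\,ds\leq\Big(\sup_{s\in[0,T]}\|\nabla\vn(s)\|^2\Big)^2\int_0^T\|\nabla\vn(s)\|^2\,ds\leq C\be^{-4}.
\end{equation*}
Substituting back and multiplying by $\be^6$ absorbs every negative power of $\be$ and leaves
\begin{equation*}
\be^6\int_0^t\|\Delta\vn(s)\|^2\,ds\leq C\be^6\|\nabla u_0\|^2+C\be^8\|\Delta u_0\|^2+C\be^2\leq c,
\end{equation*}
which is the claim. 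The only delicate point is the admissibility of the test function $-\Delta\vn$; this is automatic in the Fourier-periodic setting because $\Delta$ commutes with $P_n$, whereas in a bounded domain with Dirichlet data the analogous computation would require the Stokes operator $A$ in place of $-\Delta$. The loss of the factor $\be^6$ is genuine and is dictated by the $\be^{-4}$ blow-up of $\int_0^T\|\nabla\vn\|^6\,ds$, which is what makes the scaling in Theorem~\ref{thm:main3} (the condition $\be_n^{-1}=o(n^6)$) natural.
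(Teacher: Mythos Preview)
Your proof is correct and follows the same route as the paper: test \eqref{eq:GA} with $-\Delta\vn$ (legitimate since $\Delta$ preserves $V_n$), bound the convective term by Gagliardo--Nirenberg plus Young, and close with the energy estimate of Lemma~\ref{lem:2.0}. The only difference is in the interpolation indices: the paper uses H\"older with $4,4,2$ and the estimate $\|\vn\|_4\|\nabla\vn\|_4\|\Delta\vn\|\leq c\|\vn\|^{1/4}\|\nabla\vn\|\|\Delta\vn\|^{7/4}$, which after Young produces $\|\nabla\vn\|^{8}$ (times the bounded factor $\|\vn\|^2$) and hence requires exactly the weight $\be^6$; your $6,3,2$ splitting produces only $\|\nabla\vn\|^6$, giving in fact the slightly sharper bound $\be^4\int_0^t\|\Delta\vn\|^2\,ds\leq c$. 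So your closing remark that the $\be^6$ loss is ``genuine'' is not quite consistent with your own computation --- your argument would allow $\be_n^{-1}=o(n^4)$ in Theorem~\ref{thm:main3} --- but this only strengthens the conclusion and does not affect the validity of the lemma as stated.
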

\begin{proof}
  We multiply by $-\Delta\vn$ the equations~\eqref{eq:GA} and after an
  integration by parts we get (recall that due to the choice of the
  basis $\Delta\vn\in V_{n}$)
  \begin{equation*}
    \begin{aligned}
      \frac{1}{2}\frac{d}{dt}(\|\nabla\vn\|^2+&\be^2\|\Delta\vn\|^2)+\|\Delta\vn\|^2 
      \\
      &
      =\int_\Omega P_n((\vn\cdot\nabla)\,\vn)\cdot\Delta\vn\,dx
      \\
      &=\int_\Omega (\vn\cdot\nabla)\,\vn\cdot P_n(\Delta\vn)\,dx
      \\
      &=\int_\Omega (\vn\cdot\nabla)\,\vn\cdot \Delta\vn\,dx.
    \end{aligned}
  \end{equation*}
  Integrating in time over $(0,t)$ and since the initial datum belongs
  to $H^2(\T)$ we get
  \begin{equation*}
    \begin{aligned}
      \|\vn\|^2+&
      \be^2\|\Delta\vn\|^2+2\int_0^t\|\Delta\vn(s)\|^2\,ds\leq
      \|u_{0}\|^2+\be^2\|\Delta u_{0}\|^2
      \\
      &\  +2\int_0^t\int_\Omega |\vn||\nabla\vn| |\Delta\vn|\,dx ds
      \\
      &\leq \|u_{0}\|^2+\be^2\|\Delta u_{0}\|^2
      \\
      &\
      +c\int_0^t\|\vn(s)\|^{\frac{1}{4}}\|\nabla\vn(s)\|\|\Delta\vn(s)\|^{\frac{7}{4}}\,ds
      \\
      &\leq \|u_{0}\|^2+\be^2\|\Delta u_{0}\|^2
      +c\sup_{0<t<t}\|\nabla\vn(t)\|^{6}\int_0^t\|\nabla\vn(s)\|^{2}\,ds,
    \end{aligned}
  \end{equation*}
  where we have used the usual Gagliardo-Nirenberg, H\"older, and
  Young inequalities and Lemma~\ref{lem:2.0}. By multiplying the above
  inequality on both side by $\be^6$ and using again
  Lemma~\ref{lem:2.0} we get that, for all $0<\alpha\leq1$ and for all
  $t\in(0,T)$,
  \begin{equation*}
    \be^6\|\nabla\vn(t)\|^2+\be^8\|\Delta\vn(t)\|^2+\be^6\int_0^t\|\Delta
    \vn(s)\|^2\,ds\leq c, 
  \end{equation*}
  with a constant $c$ independent of $\alpha$ and of $n$, thus ending
  the proof.
\end{proof}
Moreover, we have also the analogue of Lemma~\ref{lem:ut}, we omit the
proof since it is essentially the same.
\begin{lemma}
  \label{lem:2}
  Let $u_0\in H^{1}_\sigma(\T)$ and let $\vn$ be the corresponding
  solution of~\eqref{eq:GA}. Then, there exists $c>0$, independent of
  $\be>0$ and of $n\in \N$, such that for all $t\in(0,T)$
  \begin{equation*}
    \be^{3}\int_0^t\|\partial_t\vn(s)\|^2\,dt\leq c.
  \end{equation*}
\end{lemma}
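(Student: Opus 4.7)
}
The plan is to mimic the proof of Lemma~\ref{lem:ut} at the Fourier--Galerkin level, exploiting the fact that $\partial_{t}\vn\in V_{n}$ so that $\partial_{t}\vn$ is an admissible test function in~\eqref{eq:GA}. First I would take the $L^{2}(\T)$-scalar product of the momentum equation in~\eqref{eq:GA} with $\be^{3}\partial_{t}\vn$. Because $P_{n}$ is the orthogonal projector onto $V_{n}$ and $\partial_{t}\vn\in V_{n}$, the projector can be transferred onto $\partial_{t}\vn$ and then removed, so that after integration by parts in space and in time over $(0,t)$ one obtains
\begin{equation*}
\frac{\be^{3}}{2}\|\nabla \vn(t)\|^{2}+\be^{3}\!\int_{0}^{t}\!\|\partial_{t}\vn(s)\|^{2}\,ds+\be^{5}\!\int_{0}^{t}\!\|\nabla\partial_{t}\vn(s)\|^{2}\,ds\le \frac{\be^{3}}{2}\|\nabla P_{n}u_{0}\|^{2}+\be^{3}\!\int_{0}^{t}\!\!\int_{\T}|\vn|\,|\nabla\vn|\,|\partial_{t}\vn|\,dx\,ds.
\end{equation*}

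Next I would control the trilinear term exactly as in Lemma~\ref{lem:ut}: by H\"older's inequality and the three-dimensional Gagliardo--Nirenberg interpolation $\|w\|_{4}\le c\|w\|^{1/4}\|\nabla w\|^{3/4}$ applied both to $\vn$ and to $\partial_{t}\vn$, the right-hand side is bounded by
\begin{equation*}
c\,\be^{3}\int_{0}^{t}\|\vn\|^{1/4}\|\nabla\vn\|^{7/4}\|\partial_{t}\vn\|^{1/4}\|\nabla\partial_{t}\vn\|^{3/4}\,ds.
\end{equation*}
Using Lemma~\ref{lem:2.0} to bound $\|\vn\|_{L^{\infty}L^{2}}$ and $\be\|\nabla\vn\|_{L^{\infty}L^{2}}$ uniformly in $n$ and $\be$, one factors out a harmless $(\|u_{0}\|^{2}+\be^{2}\|\nabla u_{0}\|^{2})^{1/8}$ and rewrites the remaining integrand so that a Young inequality with exponents $(8/3,8,2)$ distributes the weights as $\be^{3/2}\|\nabla\vn\|^{3/2}\cdot \|\nabla\vn\|^{2}\cdot \be^{3/2}\|\partial_{t}\vn\|^{1/4}\cdot \be^{5/2}\|\nabla\partial_{t}\vn\|^{3/4}$. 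The first factor is again uniformly bounded by Lemma~\ref{lem:2.0}, while Young's inequality absorbs $\tfrac{1}{2}\be^{3}\|\partial_{t}\vn\|^{2}$ and $\tfrac{1}{2}\be^{5}\|\nabla\partial_{t}\vn\|^{2}$ into the left-hand side, leaving only the term $c\,(\|u_{0}\|^{2}+\be^{2}\|\nabla u_{0}\|^{2})\int_{0}^{t}\!\|\nabla\vn\|^{2}\,ds$ on the right, which is $O(1)$ again by Lemma~\ref{lem:2.0}.

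Putting these estimates together yields, for $0<\be\le 1$ and every $t\in(0,T)$,
\begin{equation*}
\be^{3}\|\nabla\vn(t)\|^{2}+\be^{3}\!\int_{0}^{t}\|\partial_{t}\vn(s)\|^{2}\,ds+\be^{5}\!\int_{0}^{t}\|\nabla\partial_{t}\vn(s)\|^{2}\,ds\le c,
\end{equation*}
with $c$ independent of $\be$ and of $n$, which in particular gives the claimed bound. The only subtle point is really the bookkeeping of the powers so that the quantities $\be^{3}\|\partial_{t}\vn\|^{2}$ and $\be^{5}\|\nabla\partial_{t}\vn\|^{2}$ appearing on the right after Young's inequality have \emph{exactly} the prefactors that are already on the left, so that absorption is legal; the $n$-independence is automatic because we only used the Galerkin energy identity of Lemma~\ref{lem:2.0} and the scale-invariant Gagliardo--Nirenberg inequality, both of which hold uniformly in $n$.
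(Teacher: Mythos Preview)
Your proposal is correct and is exactly the approach the paper indicates: the paper omits the proof of Lemma~\ref{lem:2}, stating only that ``it is essentially the same'' as that of Lemma~\ref{lem:ut}, and your argument reproduces that proof verbatim at the Galerkin level, with the only new observation being that $\partial_{t}\vn\in V_{n}$ so the projector $P_{n}$ can be dropped from the nonlinear term. The Gagliardo--Nirenberg step is legitimate in the periodic setting because $\vn$ and $\partial_{t}\vn$ have zero mean, and the $n$-independence is indeed automatic from Lemma~\ref{lem:2.0} as you note.
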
 
From the elliptic equations associated to $\pn$ we can easily prove
the following estimate
\begin{lemma}
  \label{lem:2.3}
  Let $u_0\in H^{1}_\sigma(\T)$ and let $(\vn, \pn)$ be a solution
  of~\eqref{eq:GA}, where the pressure is defined
  through~\eqref{eq:AP}. Then, there exists $c>0$, independent of
  $\be>0$ and of $n\in \N$, such that for all $t\in(0,T)$ such that
  \begin{equation*}
  \int_0^t\|\pn(s)\|_{{ \frac{5}{3}}}^{\frac{5}{3}}\,ds\leq c.
\end{equation*}
\end{lemma}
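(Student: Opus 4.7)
The plan is to exploit the fact that in the torus the pressure is explicitly determined by the velocity through the Poisson equation~\eqref{eq:AP}, so the desired estimate reduces to a classical Calder\'on--Zygmund bound applied to $u^{\be,n}\otimes u^{\be,n}$, together with the standard interpolation bound between $L^\infty_t L^2_x$ and $L^2_t H^1_x$. The only input we need is the uniform energy estimate from Lemma~\ref{lem:2.0}, so $\be$ and $n$ will disappear cleanly.

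First I would recall that on $\T$ the operator $(-\Delta)^{-1}\partial_i\partial_j$ is given by a zero-order Fourier multiplier (a composition of Riesz transforms) acting on mean-free functions, so by Calder\'on--Zygmund theory it is bounded on $L^q(\T)$ for every $q\in(1,\infty)$. Applied to~\eqref{eq:AP} and using the normalization of $\pn$ to zero mean value, this gives for a.e.~$t$
\begin{equation*}
  \|\pn(t)\|_{\frac{5}{3}}\leq c\,\|\vn(t)\otimes\vn(t)\|_{\frac{5}{3}}
  =c\,\|\vn(t)\|_{\frac{10}{3}}^{2},
\end{equation*}
with $c$ depending only on the dimension.

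Next I would get the interpolation estimate $\vn\in L^{10/3}((0,T)\times\T)$ uniformly in $\be$ and $n$. By Lemma~\ref{lem:2.0} the family $\{\vn\}$ is bounded in $L^\infty(0,T;L^2_\sigma(\T))\cap L^2(0,T;H^1_\sigma(\T))$ (here one uses Poincar\'e, since functions have zero mean). Combining the Sobolev embedding $H^1(\T)\hookrightarrow L^6(\T)$ with the interpolation $L^{10/3}=[L^2,L^6]_{\theta}$ at $\theta=4/5$ gives
\begin{equation*}
  \int_0^t\|\vn(s)\|_{\frac{10}{3}}^{\frac{10}{3}}\,ds
  \leq \|\vn\|_{L^\infty(0,T;L^2)}^{\frac{4}{3}}\int_0^t\|\vn(s)\|_{6}^{2}\,ds
  \leq c,
\end{equation*}
with $c$ independent of $\be,n$.

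Putting the two steps together yields
\begin{equation*}
  \int_0^t\|\pn(s)\|_{\frac{5}{3}}^{\frac{5}{3}}\,ds
  \leq c\int_0^t\|\vn(s)\|_{\frac{10}{3}}^{\frac{10}{3}}\,ds\leq c,
\end{equation*}
which is the claim. There is no real obstacle here: the argument is the Fourier-side analogue of the Sohr--von Wahl estimate used in the Dirichlet case, and it is substantially simpler because on $\T$ the Leray projector and $(-\Delta)^{-1}$ are diagonal in the basis of exponentials, so no boundary terms enter and no fractional Sobolev machinery is needed. The only point to be careful about is working with mean-free fields so that the Riesz multipliers are well defined; this is guaranteed by the normalization of both $\vn$ and $\pn$.
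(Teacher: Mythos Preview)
Your argument is correct and is exactly the standard route the paper has in mind when it writes ``from the elliptic equations associated to $\pn$ we can easily prove'' the estimate; the paper gives no further details beyond that sentence. One minor slip: the interpolation index in $L^{10/3}=[L^{2},L^{6}]_{\theta}$ is $\theta=3/5$, not $4/5$, but the displayed inequality with exponents $4/3$ on the $L^{\infty}_{t}L^{2}_{x}$ factor and $2$ on the $L^{6}$ factor is the correct one, so the conclusion is unaffected.
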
 
We observe that from the previous estimates it follows that in the
space-periodic case we can prove that the unique weak solution
$(\ub,p^\be)$ of the space-periodic NSV~\eqref{eq:Voigt} obtained as
$\lim_{n\to+\infty}(\vn,\pn)$ satisfies the same estimates. Hence, we
can easily infer the following result, which we recalled in the
introduction.
\begin{theorem}
  Let in the space-periodic case $(\ub,p^{\be})$ be a weak solution to
  the NSV equation~\eqref{eq:Voigt}. Then, as $\be\to0$ the couple
  $(\ub,p^{\be})$ converges (up to sub-sequences) to a suitable weak
  solution to the space-periodic NSE~\eqref{eq:nse}.
\end{theorem}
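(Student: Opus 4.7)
The plan is to mirror the strategy used for Theorem~\ref{thm:main}, taking advantage of two simplifications afforded by the periodic setting: the pressure lies in $L^{5/3}((0,T)\times\T)$ directly (no fractional Sobolev machinery needed), and integrations by parts produce no boundary contributions. First I would collect the $\be$-uniform estimates. The bounds in Lemmas~\ref{lem:2.0}, \ref{lem:2.1}, \ref{lem:2}, and \ref{lem:2.3} are uniform in both $n\in\N$ and $\be\in(0,1]$, and since the Galerkin sequence $(\vn,\pn)$ converges as $n\to+\infty$ to the unique weak solution $(\ub,\pb)$ of the space-periodic NSV, the limit inherits
\begin{equation*}
\|\ub\|_{L^\infty(0,T;L^2)}+\|\nabla\ub\|_{L^2(0,T;L^2)}\leq c,\quad \be^{3}\|\ub_t\|_{L^2(0,T;L^2)}^2\leq c,\quad \|\pb\|_{L^{5/3}((0,T)\times\T)}\leq c,
\end{equation*}
with $c$ independent of $\be$.

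Next I would extract subsequences and identify the limit as a Leray--Hopf weak solution. From the above bounds, up to a subsequence, $\nabla\ub\rightharpoonup\nabla u$ in $L^2$ and $\pb\rightharpoonup p$ in $L^{5/3}$. Strong convergence $\ub\to u$ in $L^2((0,T)\times\T)$ follows from an Aubin--Lions-type argument: rewriting the equation as $(I-\be^2\Delta)\ub_t=\Delta\ub-P((\ub\cdot\nabla)\,\ub)$ gives a uniform control of $(I-\be^2\Delta)\ub_t$ in a negative Sobolev space, while $\be^2\Delta\ub\to 0$ in the sense of distributions, which is enough to deduce time-compactness. Passing to the limit in the weak formulation of~\eqref{eq:Voigt} is then routine: the dispersive term $-\be^2\Delta\ub_t$ vanishes distributionally thanks to the weighted bound on $\ub_t$, and the nonlinearity converges by the strong-$L^2$/weak-$L^2$ pairing. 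Hence $(u,p)$ solves the periodic NSE in the Leray--Hopf sense.

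Finally, the core of the argument is to derive~\eqref{eq:GEI}. I would test~\eqref{eq:Voigt} against $\ub\phi$ with $0\leq\phi\in C_c^\infty((0,T)\times\T)$, which in the periodic setting is legitimate without boundary terms and yields an identity analogous to~\eqref{5.2}. The dissipation passes to the limit by weak lower semi-continuity; the cubic term by strong $L^3((0,T)\times\T)$ convergence of $\ub$ (interpolating $L^\infty L^2$ with $L^2 L^6$); and the pressure term $\int\ub\,\pb\,\nabla\phi$ by the weak-$L^{5/3}$/strong-$L^{5/2}$ pairing, the latter again by interpolation. The genuinely delicate step, and the main obstacle, is the dispersive contribution $\be^2\int(\Delta\ub_t,\ub\phi)\,dt$. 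I would treat it exactly as at the end of the proof of Theorem~\ref{thm:main}: integrating by parts once in time (admissible since $\phi$ vanishes near $t=0$ and $t=T$) and once in space reduces it to terms involving $\phi_t$, $\Delta\phi_t$, and a residual $\be^2\int(\ub_t\,\nabla\ub,\nabla\phi)\,dt$. The first two are $O(\be^2)$ by the basic energy estimate; the residual is bounded by $c\,\be^{1/2}$ via Cauchy--Schwarz together with $\be^3\|\ub_t\|_{L^2L^2}^2\leq c$ from Lemma~\ref{lem:2} and $\|\nabla\ub\|_{L^2L^2}\leq c$. Hence this term vanishes as $\be\to 0^+$, and $(u,p)$ is a suitable weak solution in the sense of Definition~\ref{def:sws}.
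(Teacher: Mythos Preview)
Your proposal is correct and follows essentially the same approach as the paper: the paper's proof simply refers back to Theorem~\ref{thm:main2} (the Navier-boundary case), and what you have written is precisely the unpacking of that argument in the periodic setting, with the pressure handled via the $L^{5/3}$ bound from the Poisson equation (Lemma~\ref{lem:2.3}) and the dispersive term $\be^2(\Delta\ub_t,\ub\phi)$ disposed of exactly as in the proof of Theorem~\ref{thm:main} using the weighted estimate of Lemma~\ref{lem:2}. The only superfluous ingredient is your mention of Lemma~\ref{lem:2.1}, which is not needed here (it is used only for Theorem~\ref{thm:main3}).
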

\begin{proof}
  The proof is apart some simplifications the same as in the case of
  Navier conditions of Theorem~\ref{thm:main2}. In some sense the use
  of Navier-conditions allows to use, even in a slightly more
  complicated way, the same tools of reconstructing the pressure via
  the solution of a Poisson equation.
\end{proof}
Concerning the Galerkin approximation, the challenging point is
studying the limit obtained in the other way around: first the limit
as $\be\to0$ and then that as $n\to\infty$. This is still an open
problem.  A possible way to partially handle this problem is to link
$\be$ and $n$ as stated in Theorem~\ref{thm:main3}. To this end we
first recall the following lemma, which is proved as one of the steps
in~\cite[Lemma~4.4]{BCI2007}.
\begin{lemma}
  \label{lem:2.4}
  Let us define
  \begin{equation*}
    g(t):=\|Q_n(\phi(t)\, \vn(t))\|_{L^\infty(\T)},
  \end{equation*}
  where $\phi\in C^{\infty}((0,T)\times\T)$ is space-periodic and with
  support contained in $(0,T)$. Then, there exists a constant $c$
  depending only on $\phi$ such that if $\vn(t,x)=\sum_{0<|k|\leq
    n}d_k^{n}(t)\,\text{e}^{ik\cdot x}$, then
  \begin{equation*}
    g(t)^2\leq
    c\left(n^2\sum_{|k|\geq\frac{n}{2}}|d^{n}_k(t)|_2^2+
      \frac{1}{n}\sum_{k\in\mathbb{Z}^3}|d^{n}_k(t)|_2^2\right).   
  \end{equation*}
\end{lemma}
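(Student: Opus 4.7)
The plan is to bound $\|Q_n(\phi \vn)\|_{L^\infty(\T)}$ by the Fourier $\ell^1$-norm at high wave-numbers, and then exploit the separation between low and high frequency modes of $\vn$ together with the rapid Fourier decay of the smooth test function $\phi$.

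Since $Q_n$ is a Fourier multiplier bounded pointwise by one (it is the orthogonal Leray projection restricted to $|m|>n$) and $\|f\|_{L^\infty(\T)}\leq \sum_{m}|\widehat f_m|$, one obtains
\begin{equation*}
  g(t) \leq \sum_{|m|>n}\Bigl|\sum_{0<|k|\leq n}\phi_{m-k}(t)\,d_k^n(t)\Bigr|,
\end{equation*}
where $\phi_j(t)$ denote the spatial Fourier coefficients of $\phi(t,\cdot)$. I would then split the sum over $k$ at $|k|=n/2$. In the ``low'' regime $|k|<n/2$, the constraint $|m|>n$ forces $|m-k|>n/2$, so $\phi_{m-k}$ lies in the range where the rapid decay $\sum_{|j|>n/2}|\phi_j(t)|\leq c_s n^{-s}$ (valid for every $s>0$ because $\phi$ is $C^\infty$ in $x$ and has compact support in $(0,T)$) can be invoked. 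Combined with Cauchy--Schwarz in $k$ and a sufficiently large choice of $s$, the squared contribution is dominated by $c\,n^{-1}\sum_{k}|d_k^n(t)|_2^2$, accounting for the second term in the statement.

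In the ``high'' regime $n/2\leq|k|\leq n$ I would first factor out the uniform bound $\sum_j|\phi_j(t)|\leq C$ coming from smoothness of $\phi$, reducing the task to estimating $\sum_{n/2\leq|k|\leq n}|d_k^n(t)|$. A naive Cauchy--Schwarz against the $\sim n^3$ lattice points of the spherical shell would only give the crude factor $n^3$; to obtain the sharp $n^2$ one combines the Fourier $\ell^1$ bound with the Gagliardo--Nirenberg-type interpolation
\begin{equation*}
 \|f\|_{L^\infty(\T)}\leq c\,\|f\|_{L^2(\T)}^{1/4}\|\Delta f\|_{L^2(\T)}^{3/4}
\end{equation*}
applied to $f=Q_n(\phi \vn)$, distributing $\Delta$ over the product via Leibniz's rule and exploiting the elementary inequality $|m|^2\leq 2|m-k|^2+2|k|^2$ together with $|m|>n$ in order to trade one power of $n$ for a power of the high-frequency $\ell^2$-energy of $\vn$.

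The main obstacle will be precisely this exchange of a power of $n$ against a power of $|k|$ in the convolution estimate: it is here that the careful dyadic (Littlewood--Paley) bookkeeping of \cite[Lemma~4.4]{BCI2007} enters, and one has to be mindful that the constant $c$ depends on $\phi$ only through finitely many norms of the form $\sum_j(1+|j|)^s|\phi_j(t)|$, which are indeed bounded uniformly in $t$ because $\phi$ is smooth in space and compactly supported in $(0,T)$.
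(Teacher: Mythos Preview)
The paper does not supply its own proof of this lemma; it is quoted verbatim from \cite[Lemma~4.4]{BCI2007}, so there is nothing in the paper to compare against beyond the citation. Your sketch of the low-frequency regime $|k|<n/2$ is essentially correct: the constraint $|m|>n$ forces $|m-k|>n/2$, and the rapid Fourier decay of $\phi$ then absorbs any polynomial in $n$, producing the term $n^{-1}\sum_k|d_k^n|^2$.

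The high-frequency part, however, has a genuine gap. The Gagliardo--Nirenberg interpolation you propose, applied to $f=Q_n(\phi\,\vn)$, only yields $n^{3}$, not $n^{2}$: after Leibniz one finds
\[
\|Q_n(\phi\,v_{\text{high}})\|_{L^2}\leq C\Bigl(\sum_{|k|\geq n/2}|d_k^n|^2\Bigr)^{1/2},
\qquad
\|\Delta Q_n(\phi\,v_{\text{high}})\|_{L^2}\leq Cn^{2}\Bigl(\sum_{|k|\geq n/2}|d_k^n|^2\Bigr)^{1/2},
\]
so $\|f\|^{1/2}\|\Delta f\|^{3/2}$ is of order $n^{3}\sum_{|k|\geq n/2}|d_k^n|^2$. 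The auxiliary inequality $|m|^{2}\leq 2|m-k|^{2}+2|k|^{2}$ does not recover the missing power of $n$; it merely redistributes weights that are already of size $n^{2}$ on both sides when $|k|$ is close to $n$ and $|m-k|$ is small.

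The mechanism that actually produces $n^{2}$ is a lattice-point count in a thin shell, which your interpolation route does not see. Writing $j=m-k$ and interchanging the sums gives
\[
g(t)\leq \sum_{j}|\phi_j(t)|\sum_{k\in S_j}|d_k^n(t)|,\qquad S_j:=\{k:0<|k|\leq n,\ |k+j|>n\}.
\]
For $|j|\leq n/2$ one has $S_j\subset\{k:n-|j|<|k|\leq n\}$, a spherical shell containing at most $Cn^{2}|j|$ lattice points rather than $Cn^{3}$; Cauchy--Schwarz over $S_j$ then gives $\sum_{k\in S_j}|d_k^n|\leq Cn|j|^{1/2}(\sum_{|k|\geq n/2}|d_k^n|^2)^{1/2}$, and summing against $|\phi_j|$ (with $\sum_j|j|^{1/2}|\phi_j|<\infty$ by smoothness of $\phi$) yields the factor $n$, not $n^{3/2}$, before squaring. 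The contribution from $|j|>n/2$ is handled by the decay of $\phi_j$ exactly as in your low-frequency argument. Without this shell-counting step the bound with $n^{2}$ cannot be reached.
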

This lemma will be used to estimate the integral
\begin{equation*}
  \int_0^T\int_{\T}  Q_n((\vn\cdot\nabla)\,\vn)\, \vn\phi\,dx dt,
\end{equation*}
which comes out when testing~\eqref{eq:GA1} by $\vn\phi$.
\subsection{Proof of the Theorem~\ref{thm:main3}}
First, we note that the convergence $\lim_{n\to+\infty}\wn\to u$
towards a Leray-Hopf weak solutions is standard. In particular, we get
from the basic a priori estimate and Lemma~\ref{lem:2.3} that there
exists $u\in L^{\infty}(0,T;L^2(\T))\cap L^{2}(0,T;H^1(\T))$ such that
the following convergences hold true
\begin{equation}
  \label{eq:conv-bis}
  \begin{aligned}
    &\wn \rightarrow u\textrm{ strongly in }L^2(0,T;L^2(\T)),
    \\
    &\wn\rightharpoonup u\textrm{ weakly in }L^2(0,T;H^1(\T)),
    \\
    &\qn\rightharpoonup p\textrm{ weakly in
    }L^{\frac{5}{3}}(0,T;L^{\frac{5}{3}}(\T)).
  \end{aligned}
\end{equation}
As in the proof of Theorem~\ref{thm:main} we need to prove that the
local energy inequality is satisfied. We multiply the
equations~\eqref{eq:GA1} by $\wn\phi$ with $\phi$ non-negative
positive and belonging to $C_c^\infty((0,T)\times\T)$. After standard
integrations by parts and by using the fact that $Q_n$ is a projector
on the orthogonal of $V_n$ we get
\begin{equation*}
  \begin{aligned}
    & \int_{0}^{T}(|\nabla \wn|^{2},\phi)\,dt
    \\
    &\qquad=\int_{0}^{T}\frac{|\wn|^{2}}{2}(\phi_{t}+\Delta\phi)+
    (\wn\frac{|\wn|^{2}}{2},\nabla\phi) +(\wn\,\pn,\nabla\phi)\,dt
    \\
    &\qquad\qquad+\be_n^2\int_0^T(\Delta\wn_{t},\wn\phi)\,dt-
    \int_0^T((\wn\cdot\nabla)\,\wn, Q_n(\wn\phi))\,dt.
\end{aligned}
\end{equation*}
By using the convergences from Eq.~\eqref{eq:conv-bis},
Lemmas~\ref{lem:2}-\ref{lem:2.3}, and the results of the previous
section, it is possible to show how pass to the limit as $n\to+\infty$
in all the terms of the above equality, except the last one. To
successfully handle this we have to assume a particular behavior for
the sequence $\{\be_n\}_{n}$. Indeed, by using H\"older inequality we
have that
\begin{equation*}
  \begin{aligned}
    &\left|\int_0^T((\wn\cdot\nabla)\,\wn,Q_n( \wn\phi))\,dt\right|
    \\
    & \qquad\qquad \leq\int_0^T\|\wn(t)\|\|\nabla
    \wn(t)\|\|Q_n(\wn\phi)(t)\|_{L^{\infty}}\,dt
    \\
    &\qquad\qquad \leq\|\wn(t)\|_{L^\infty(0,T;L^2)}\|\nabla
    \wn(t)\|_{L^2(0,T;L^2)}\|g(t)\|_{L^2(0,T)}
    \\
    & \qquad\qquad\leq c\left(\int_0^T
      n^2\sum_{|k|\geq\frac{n}{2}}|\wn_k(t)|^2+\frac{1}{n}
      \sum_{k\in\mathbb{Z}^3}|{u}_{n}^{k}(t)|^2\,dt\right)^{\frac{1}{2}}.
  \end{aligned} 
\end{equation*}
where we have used the fact that, by the basic energy estimate of
Lemma~\ref{lem:2.0}, both $\|\wn\|_{L^\infty(0,T;L^2)}$ and $\|\nabla
\wn\|_{L^2(0,T;L^2)}$ are uniformly bounded in $n\in\N$. Moreover, the
second term in the right-hand side of the above inequality converges
to $0$ as $n\rightarrow \infty$ because the sum is bounded, again by
using the standard a priori estimate.

Then, we have only to show that the term
\begin{equation*}
  \int_0^Tn^2\sum_{|k|\geq\frac{n}{2}}|\wn_k(t)|^2\,dt,
\end{equation*}
converges to zero, as $n\to+\infty$.

In particular, by using the a-priori estimates from
Lemma~\ref{lem:2.1} we have that
\begin{equation*}
\begin{aligned}
  \int_0^T n^2\sum_{|k|\geq\frac{n}{2}}|\wn_k(t)|_2^2&=
  \frac{n^2\be_n^6}{n^2\be_n^6}\int_0^T\sum_{|k|\geq\frac{n}{2}}
  n^2|\wn_k(t)|^2\,dt
  \\
  &\leq 4
  \frac{\be_n^6}{n^2\be_n^6}\int_0^T\sum_{|k|\geq\frac{n}{2}}|k|^4|\wn_k(t)|^2\,dt
  \\
  &\leq \frac{4}{n^2\be_n^6}\
  \be_n^6\int_0^T\sum_{k\in\Z^3\backslash\{0\}}|k|^4|\wn_k(t)|^2\,dt
  \\
  &\leq \frac{c}{n^2\be_n^6} \ \be_n^6\int_0^T\|\Delta
  \wn(t)\|^2\,dt\leq \frac{c}{n^2\be_n^6} .
\end{aligned}
\end{equation*}
Then, for any positive sequence $\{\be_n\}_n$ such that
\begin{equation*}
  \lim_{n\to+\infty}\be_n=0\qquad\text{and}\qquad
  \lim_{n\to+\infty}\be_n^6n^2=0
\end{equation*}
we get that
\begin{equation*}
  \int_0^T g^2(t)\,dt\rightarrow +\infty,
\end{equation*}
 then the generalized energy inequality is proved.
 \section*{Acknowledgments}
 The research that led to the present paper was partially supported by
 a grant of the group GNAMPA of INdAM.
 \bibliographystyle{amsplain}
 \def\ocirc#1{\ifmmode\setbox0=\hbox{$#1$}\dimen0=\ht0 \advance\dimen0
   by1pt\rlap{\hbox to\wd0{\hss\raise\dimen0
       \hbox{\hskip.2em$\scriptscriptstyle\circ$}\hss}}#1\else {\accent"17 #1}\fi}
 \def\cprime{$'$} \def\polhk#1{\setbox0=\hbox{#1}{\ooalign{\hidewidth
       \lower1.5ex\hbox{`}\hidewidth\crcr\unhbox0}}} \def\cprime{$'$}
 \providecommand{\bysame}{\leavevmode\hbox to3em{\hrulefill}\thinspace}
 \providecommand{\MR}{\relax\ifhmode\unskip\space\fi MR }
 \providecommand{\MRhref}[2]{%
   \href{http://www.ams.org/mathscinet-getitem?mr=#1}{#2}
 }
 \providecommand{\href}[2]{#2}

\end{document}